\documentclass[12pt]{amsart}
\usepackage{comment}
\usepackage{macros}
\usepackage{enumitem}
\usepackage{stmaryrd}
\usepackage{braket}

\newtheorem*{theorem*}{Theorem}

\usepackage{mathtools}

\newcommand{\ra}{\rightarrow}

\begin{document}
\title[]{Towards Lang--Vojta via degeneration}

\author[Ryan C. Chen]{Ryan C. Chen}
\address{Department of Mathematics,
	Fine Hall - Washington Road.
        Princeton, NJ 08544, USA}
\email[R. C. Chen]{rcchen.math@gmail.com}

\author{Natalia Garcia-Fritz}
\address{ Departamento de Matem\'aticas,
Pontificia Universidad Cat\'olica de Chile.
Facultad de Matem\'aticas,
4860 Av.\ Vicu\~na Mackenna,
Macul, RM, Chile}
\email[N. Garcia-Fritz]{natalia.garcia@uc.cl}%

\author{Siddharth Mathur}
\address{Department of Mathematics, University of Georgia, Athens, GA 30602, USA}
\email[S. Mathur]{siddharth.mathur@uga.edu}%

\author{Hector Pasten}
\address{ Departamento de Matem\'aticas,
Pontificia Universidad Cat\'olica de Chile.
Facultad de Matem\'aticas,
4860 Av.\ Vicu\~na Mackenna,
Macul, RM, Chile}
\email[H. Pasten]{hector.pasten@uc.cl}%

\date{\today}
\subjclass[2020]{Primary: 14G05; Secondary: 14G40, 11G50} %
\keywords{Lang--Vojta conjecture, integral points, moduli stacks, binary forms}%

\begin{abstract} Towards the Lang--Vojta conjecture, we prove results on finiteness and Zariski degeneracy of $S$-integral points of varieties over number fields $k$, including many cases with geometrically irreducible boundary divisors. Our approach builds on the study of arithmetic and geometric properties of moduli spaces of curves with extra structure. 
As an application, we provide families of explicit examples of geometrically irreducible divisors on the projective plane (such as the dual of any smooth curve of degree at least $3$), with respect to which the sets of $S$-integral points  are finite.
Answering a question of  Achenjang and Morrow, we show that, other than the case of curves, every normal projective variety admits a geometrically irreducible divisor $D$ for which finiteness of $(D,S)$-integral points holds over every finite extension of $k$. 
\end{abstract}

\maketitle



\section{Introduction} 

\subsection{The Lang--Vojta conjecture} 

A general conjecture of Lang \cite[Conjecture 5.7]{MR828820} (first asked by Bombieri in the case of surfaces \cite{Noguchi82}) predicts that on a smooth projective variety $X$ of general type over a number field the set of rational points is not dense in the Zariski topology; this conjecture is now known as the Bombieri--Lang conjecture.
Vojta's conjectures \cite[Conjecture 4.3]{MR861984}
extend the Bombieri--Lang conjecture
to pairs $(X,D)$ of log general type and Zariski non-density
of $(D,S)$-integral points.

\begin{conjecture}[The Lang--Vojta conjecture]\label{conjecture:Lang--Vojta} Let $k$ be a number field and $X/k$ a smooth projective variety, with canonical sheaf $\mathcal{K}$. Let $D$ be an effective, reduced, normal crossings divisor on $X$ and let $S$ be any finite set of primes in $\mathcal{O}_k$. If $\mathcal{K}\otimes\mathcal{O}_X(D)$ is big, then any set of $(D,S)$-integral points in $X$ is Zariski degenerate (i.e. not  dense in the Zariski topology).
\end{conjecture}

Roughly, $(D,S)$-integral points are $\mathcal{O}_{k,S}$-points of a model of $X \backslash D$, and we will sometimes refer to these simply as integral points. 
See also \ref{SecIntegralPoints} for a review of these notions, and  \cite{MR3477540} for a detailed discussion on the Lang--Vojta conjecture. 

These problems are central in Diophantine geometry and, for the most part, they remain open, even when $X=\mathbb{P}^2$ and $D$ is a smooth plane curve. The Bombieri--Lang conjecture is known for 
projective varieties with ample and globally generated canonical sheaf, by work of Moriwaki and Noguchi \cite{Moriwaki95,Noguchi82} (see also \cite{MR4381926}), and for subvarieties of abelian varieties, by work of Faltings \cite{MR1109353, MR1307396, MR1109352}. The conjecture is also known for projective varieties admitting non-constant maps to either the moduli stack of smooth curves of genus $g \geq 2$, or to the moduli stack of principally polarized abelian varieties
of any dimension,
also by work of Faltings \cite{MR718935}.
The latter is part of a frequently occurring phenomenon which we exploit in this article: many moduli spaces admit only finitely many integral points. See, for example, \cite{MR3673652, MR4699876, MR4502602,KraemerMaculan2025, lawsawin, MR4132959} for some recent results along these lines.

On the other hand, more is known about the Lang--Vojta conjecture when $D$ has several components, though not exactly under positivity assumptions. This is thanks to a method introduced by Corvaja and Zannier using the Schmidt Subspace Theorem, see \cite{MR2123936, MR1891001}. Important applications of this method have been obtained and we refer, for instance, to \cite{MR2552103}.

\subsection{Arithmetic results}

For the remainder of the introduction, we will work over a fixed number field $k$ and let $S$ be any finite set of primes of $\mathcal{O}_k$. In this article, we study geometrically \emph{irreducible}
and often-singular boundary divisors
$D$ giving Zariski degenerate or even finite sets
of $S$-integral points. In the finite case, we say that
 $(X,D)$ is \emph{arithmetically hyperbolic}, (see Definition \ref{def:zardeg}
and Definition \ref{def:arithhyp} for precise definitions).
Note that we allow $D$ which are not necessarily normal crossings.

We construct many such $D$ as dual varieties of curves and we briefly recall the constructions of such varieties. Let $C \subset \mathbb{P}^n$ be a geometrically integral curve of degree at least $2$,
let $\check{\mathbb{P}}^n$ be the dual projective space, and let
\[
I_C \subset \mathbb{P}^n \times \check{\mathbb{P}}^n
\]
be the closure of the locus of pairs $$(p,[H])$$ where, $p$ is a smooth point of $C$, $H \subset \mathbb{P}^n$ is a hyperplane containing $p$, and $T_p(C) \subset T_p(H)$. The image of $I_C$ under the second projection is the \emph{dual variety} $C^* \subset \check{\mathbb{P}}^n \cong \mathbb{P}^n$ of $C$.
Since the projection $I_C \to C$ is a $\mathbb{P}^{n-2}$-bundle over the smooth locus of $C$, the dual $C^*$ is geometrically irreducible.
These pairs $(X,D) = (\check{\mathbb{P}}^n, C^*)$ 
will be one basic source of examples in this article.

We say that a geometrically integral projective curve $C$
is \emph{smoothly branched} if the normalization of $C$
is unramified over $C$ (Definition \ref{def:regim}).
This is satisfied if, for example, $C$ is at-worst nodal.

\begin{Theorem}[see Theorem \ref{thm:deg4}] \label{thmgenusgreaterone}
Fix a number field $k$ and let $C \subset \mathbb{P}^n$ be a geometrically integral smoothly branched projective curve of geometric genus $g \geq 1$, defined over $k$, which is not contained in any hyperplane. Then
the pair $(\mathbb{P}^n, C^*)$
is arithmetically hyperbolic.
\end{Theorem}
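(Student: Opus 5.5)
Write $\nu\colon \widetilde{C}\to C$ for the normalization. The idea is to turn an integral point of $\check{\mathbb{P}}^n\setminus C^*$ into a curve with extra structure having good reduction outside $S$, and then use finiteness of such objects over $\mathcal{O}_{k,S}$.

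A point $[H]\in \check{\mathbb{P}}^n\setminus C^*$ is a hyperplane that is not tangent to $C$ at any smooth point. Since $C$ is smoothly branched, it is also not tangent to any branch at a singular point. Hence the pullback divisor $\nu^*H$ on $\widetilde{C}$ is reduced of degree $d=\deg C$. So we get a smooth genus $g$ curve $\widetilde{C}$ with a reduced divisor $\nu^*H$ of degree $d$, that is, a point of the configuration space of $d$ unordered distinct points on the fixed curve $\widetilde{C}$. Because $C$ spans $\mathbb{P}^n$, the map $H\mapsto \nu^*H$ is injective: distinct hyperplanes cut distinct divisors, since $H$ is recovered as the span of $\nu^*H$ whenever $d\ge n+1$ and the linear system is nondegenerate. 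The map is therefore finite onto its image in $\widetilde{C}^{(d)}\setminus\Delta$, where $\Delta$ is the big diagonal.

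The integrality step comes next. Spread everything out over $\mathcal{O}_{k,S'}$ for a larger finite $S'\supseteq S$, so that $C$, $\widetilde{C}$ and the dual variety have good models and smooth branching persists modulo every prime outside $S'$. A $(C^*,S)$-integral point $[H]$ is one whose reduction avoids $\mathcal{C}^*$ at all primes outside $S'$. By the same tangency argument applied fibrewise, $\nu^*H$ then extends to an étale divisor of degree $d$ on the smooth proper model of $\widetilde{C}$ over $\mathcal{O}_{k,S'}$. This gives a map from integral points of $(\check{\mathbb{P}}^n,C^*)$ to $\mathcal{O}_{k,S'}$-points of $\widetilde{\mathcal{C}}^{(d)}\setminus\Delta$.

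Finiteness of the latter is the heart of the argument. After a finite extension $L/k$, unramified outside $S'$, over which the points of $\nu^*H$ become rational, we have $d$ sections of $\widetilde{\mathcal{C}}$ that are pairwise disjoint modulo all primes outside $S'$. By Hermite–Minkowski only finitely many such $L$ occur, since their degree is at most $d!$, so it suffices to treat a fixed $L$.

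The cases split by genus. For $g\ge 2$ we may even apply Faltings directly, since $\widetilde{C}(L)$ is finite. For $g=1$, make $\widetilde{C}$ an elliptic curve using one section $P_1$; the differences $P_i-P_1$ are then points that are integral with respect to the origin. Hence they lie in a finite set by Siegel's theorem on integral points on elliptic curves, equivalently finiteness of $\mathcal{M}_{1,2}$-type integral points on the fixed curve. We also need $d\ge 2$ points, which holds since $d\ge n+1\ge 3$ for a genus one nondegenerate curve. Either way there are finitely many divisors, so finitely many $H$, so arithmetic hyperbolicity holds over every finite extension as well.

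The main obstacle I expect is the integrality step. One must verify carefully that reduction of $[H]$ avoiding $C^*$ really forces the points of $\nu^*H$ to remain distinct modulo $\mathfrak{p}$, including at the singular points of $C$ modulo $\mathfrak{p}$. It must also handle primes where $H$ passes through a node: there two branch points may map to the same point of $C$ but stay distinct on $\widetilde{C}$. This is exactly where smoothly branched is used, and where the closure in the definition of $C^*$ (tangent hyperplanes at limits) must be checked in the integral model.
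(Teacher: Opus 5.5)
\textbf{The genus one case has a genuine gap.} It sits at the sentence ``Either way there are finitely many divisors.'' Siegel's theorem only bounds the differences $P_i-P_1$, so it bounds $\nu^*H$ \emph{up to translation}; the base point $P_1$ still ranges over $\widetilde{C}(L)$.

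In fact the finiteness you call the heart of the argument is false. Suppose $\widetilde{C}(L)$ has positive rank, and take one admissible configuration $(P_1,\dots,P_d)$.
\begin{itemize}
\item For any $Q\in\widetilde{C}(L)$, translation by $Q$ extends to an automorphism of the smooth proper model over $\mathcal{O}_{L,S'}$. It therefore preserves pairwise disjointness of reductions, so the translated configuration is again admissible.
\item Two translates coincide only for finitely many differences $Q-Q'$.
\end{itemize}
Hence $\widetilde{\mathcal{C}}^{(d)}\setminus\Delta$ has infinitely many $\mathcal{O}_{L,S'}$-points. This is exactly Remark \ref{rem:hilbcurve_low_genus_not_AH}. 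So the images of integral points of $\check{\mathbb{P}}^n\setminus C^*$ do not a priori lie in a finite set. You must use something specific to \emph{hyperplane} sections to show that only finitely many hyperplanes give divisors in a single $\operatorname{Aut}(\widetilde{C})$-orbit.

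The other parts of your argument are fine.
\begin{itemize}
\item The $g\ge 2$ case works; your Hermite--Minkowski step is the elementary form of the Chevalley--Weil input in Remark \ref{rem:hilbcurveAH}.
\item The integrality step you flag as the main obstacle is routine. You spread out the morphism $\check{\mathbb{P}}^n\setminus C^*\to \mathrm{Hilb}^{d,\mathrm{sm}}_{\widetilde{C}}$ supplied by Lemma \ref{lemma:dual}, enlarging $S'$ as needed.
\end{itemize}

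\textbf{How the paper handles genus one.} It maps $\check{\mathbb{P}}^n\setminus C^*$ to the stack $\mathcal{M}_{1,\Sigma_d}$, which is arithmetically hyperbolic by Corollary \ref{cor:binaryformsAH}; your Siegel step is essentially this finiteness on the fixed curve. It then shows the classifying map contracts no curve. The closure of any curve in the complement meets the hypersurface $C^*$. Along a degenerating path, an $\operatorname{Aut}(\widetilde{C})$-invariant proximity function tends to $0$, so the map cannot be constant (Lemma \ref{lemma:analytic_degen_classifying_map}). Lemma \ref{lem:nonconstant} then finishes.

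\textbf{A shorter algebraic repair inside your framework.}
\begin{itemize}
\item Every $\nu^*H$ is linearly equivalent to $\nu^*\mathcal{O}_{\mathbb{P}^n}(1)$.
\item Fix an origin $O\in\widetilde{C}(L)$. Then the sum $s=\sum_i P_i$ in the group law is independent of $H$.
\item Write $P_i=P_1+R_i$. Then $dP_1=s-\sum_i R_i$.
\item Siegel leaves finitely many tuples $(R_i)$, and each determines $P_1$ up to the finite group $\widetilde{C}[d]$.
\end{itemize}
Equivalently, an automorphism carrying $\nu^*H$ to $\nu^*H'$ must fix the class of $\nu^*\mathcal{O}(1)$ in $\mathrm{Pic}^d(\widetilde{C})$. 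That stabilizer is finite, since the translations in it are $d$-torsion. So the classifying map to $M_{1,\Sigma_d}$ is quasi-finite, and this avoids the analytic argument. Without some such input, the genus one case is not proved.
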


The situation is markedly different for rational curves. Indeed, for a rational curve $C$ of any degree $d \geq 2$ in $\mathbb{P}^2$, the complement of its dual may have infinitely many integral points (see Example~\ref{ex:sharpness}). For $d \geq 4$, however,
we show that finiteness continues to hold after prohibiting high
order tangents to $C$, and that the integral points are never Zariski dense
in general.

\begin{Theorem}[see Proposition \ref{lemma:dualrational}] \label{thmrationalnodal} Fix a number field $k$, let $C \subset \mathbb{P}^n$ be a geometrically integral smoothly branched rational curve of degree $d \geq 4$, defined over $k$, which is not contained in any hyperplane. For any finite set of primes $S$ of $\mathcal{O}_k$, any set of $(C^*,S)$-integral points in $X$ is Zariski degenerate.
Moreover, if no hyperplane section of $C$ pulls back to the normalization
as a divisor with a non-reduced point of multiplicity $\geq d - 1$,
then $(\mathbb{P}^n, C^*)$ is arithmetically hyperbolic.
\end{Theorem}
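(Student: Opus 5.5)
Let $\nu\colon \mathbb{P}^1 \to C \subset \mathbb{P}^n$ be the normalization. It is given by $n+1$ binary forms of degree $d$ that span a subspace $V \subset H^0(\mathcal{O}(d))$ of dimension $n+1$, since $C$ is nondegenerate. A hyperplane $H = [a_0:\dots:a_n]$ pulls back to the binary form $f_H = \sum a_i \nu_i$ of degree $d$. The map $H \mapsto f_H$ identifies $\check{\mathbb{P}}^n$ with the linear subspace $\mathbb{P}(V) \subset \mathbb{P}(\mathrm{Sym}^d) = \mathbb{P}^d$ of binary forms.

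\textbf{Step 1: identify $C^*$ with the discriminant.} Because $C$ is smoothly branched, $\nu$ is unramified. Hence $H$ is tangent to $C$ at a branch exactly when $f_H$ has a repeated root. So $C^* = \mathbb{P}(V) \cap \Delta$, where $\Delta$ is the discriminant hypersurface. I would check this on the open locus and then take closures.

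\textbf{Step 2: reduce to integral points of a moduli space.} Let $P$ be a $(C^*,S)$-integral point, and enlarge $S$ so that $\mathcal{O}_{k,S}$ is a PID and the models are smooth. Then $P$ gives a binary form $f_P$ with $\mathcal{O}_{k,S}$-coefficients whose discriminant is an $S$-unit up to bounded factors. Equivalently, it gives an $\mathcal{O}_{k,S}$-point of the moduli of $d$ unordered points on $\mathbb{P}^1$ with good reduction, that is, of $(\mathbb{P}^d\setminus\Delta)$ modulo $\mathrm{PGL}_2$ or $\mathrm{GL}_2$.

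By the Birch--Merriman / Evertse--Győry finiteness theorem for binary forms of degree $d\ge 3$ (equivalently, finiteness of $\mathcal{O}_{k,S}$-points of $M_{0,d}/S_d$ via the $S$-unit equation), there are only finitely many $\mathrm{GL}_2(\mathcal{O}_{k,S})$-equivalence classes of such forms, up to scaling by units. So the integral points fall into finitely many orbit classes. Each class is $\{g\cdot f_0\} \cap \mathbb{P}(V)$ as $g$ ranges over $\mathrm{GL}_2(\mathcal{O}_{k,S})$.

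\textbf{Step 3: bound the dimension of each orbit piece.} Fix $f_0$ and consider $Z_{f_0} = \{g\in \mathrm{PGL}_2 : g f_0 \in \mathbb{P}(V)\}$. Its image in $\mathbb{P}(V)$ has dimension at most $3$. More precisely it is contained in $\overline{\mathrm{PGL}_2 f_0}\cap \mathbb{P}(V)$, which is the intersection of a $3$-fold with an $n$-plane inside $\mathbb{P}^d$.

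\emph{Degeneracy when $n\ge 4$.} Here $\mathbb{P}(V)$ has dimension $n \ge 4 > 3$, so each class lies in a proper closed subset. Finitely many classes then give Zariski degeneracy.

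\emph{Degeneracy when $n\le 3$.} The $3$-fold $\overline{\mathrm{PGL}_2 f_0}$ could contain $\mathbb{P}(V)$, and I expect this to be the main obstacle. I would argue as follows. Since $d\ge4$, a general form in $V$ has a nonconstant cross-ratio invariant, so the forms in $\mathbb{P}(V)$ are not all projectively equivalent. The set of forms in $\mathbb{P}(V)$ equivalent to $f_0$ is then a proper closed subset. Where the invariants are constant along $\mathbb{P}(V)$, I would instead use the unit-equation structure directly, taking $d\ge4$ roots to get nontrivial cross-ratios.

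\textbf{Step 4: arithmetic hyperbolicity.} For finiteness, I must show that each orbit piece meets $\mathbb{P}(V)$ in only finitely many $\mathcal{O}_{k,S}$-points. The infinite families come from the stabilizer-like directions. Concretely, $g f_0$ lying in $\mathbb{P}(V)$ along a positive-dimensional family of integral $g$ forces $f_0$ to degenerate, in the limit, to a form with a root of multiplicity $\ge d-1$. Such degenerations are the only ones whose $\mathrm{PGL}_2$-orbit closures are unipotent or torus orbits, namely $x^{d-1}y$ and $x^d$-type limits.

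The plan is to take a curve of integral points in one orbit class and let its $g$ go to infinity in $\mathrm{PGL}_2$. Using a Cartan decomposition $g = k_1 t k_2$, the limit of $g f_0$ up to scaling is a form with a root of multiplicity $\ge d-1$, and this limit lies in $\overline{\mathbb{P}(V)} = \mathbb{P}(V)$. That gives a hyperplane section of $C$ with a point of multiplicity $\ge d-1$, which the hypothesis forbids. Making this limit argument rigorous for $S$-integral $g$, rather than for an algebraic curve of $g$'s, is the delicate point. I would handle it by showing that the Zariski closure of the integral points in one class is a positive-dimensional subvariety of $Z_{f_0}$, and then applying the algebraic limit argument to that subvariety.
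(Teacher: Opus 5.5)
Your dimension count in Step 3 is correct, but it only covers $n\geq 4$, and there is a genuine gap for $n\in\{2,3\}$. That range includes the main case of plane curves, since $d\geq 4$ already forces $n\geq 2$. For these $n$, the whole content of the first assertion is that the classifying map $\check{\mathbb{P}}^n\setminus C^*\to M_{0,\Sigma_d}$ is non-constant, i.e.\ that $\mathbb{P}(V)\setminus C^*$ does not lie in a single $\mathrm{PGL}_2$-orbit. The sentence \emph{a general form in $V$ has a nonconstant cross-ratio invariant} simply asserts this.

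Having $d\geq 4$ roots does not by itself make the cross-ratios vary along a linear system of forms. In Example \ref{ex:sharpness}, the forms $a t^d + c$ parametrized by the line $b=0$ are all projectively equivalent. Your fallback of using the $S$-unit equation \emph{directly} where the invariants are constant cannot work either. On such a locus the unit equation imposes nothing beyond the single moduli point, and that same example has infinitely many integral points there. Note also that Step 3 never uses any information about the hyperplanes lying in $C^*$, which is exactly what rules out constancy.

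The missing input is the one the paper uses in Proposition \ref{lemma:dualrational}, via Theorem \ref{ThmMainCriterion}(i). Since $\bar C$ has only finitely many hyperosculating points and $I_{\bar C}\to \bar C^*$ is birational, a general $[H]\in \bar C^*$ pulls back to the normalization as a divisor of type $(2,1,\dots,1)$. Because $d\geq 4$, we have $2\leq d-2$. The paper then rules out constancy with the cross-ratio proximity function (Lemma \ref{lemma:prox_limit_P^1} and Lemma \ref{lemma:analytic_degen_classifying_map}(2)).

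You can instead close the gap with the tool from your own Step 4. Suppose $\mathbb{P}(V)\setminus C^*\subset \mathrm{PGL}_2 f_0$. Then $C^*\subset \overline{\mathrm{PGL}_2 f_0}\setminus \mathrm{PGL}_2 f_0$. Every such boundary point is a limit of $g f_0$ with $g\to\infty$, so by the Cartan decomposition over $\bar k((t))$ it has a root of multiplicity $\geq d-1$. This contradicts the general simple tangency, and the argument handles all $n\geq 2$ at once.

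Step 4 is otherwise a sound algebraic substitute for the paper's analytic argument. The \emph{delicate point} you flag is not an issue. Infinitely many integral points in one class all lie in the locally closed set $\mathrm{PGL}_2 f_0\cap \mathbb{P}(V)$, so it suffices to show that this set is finite, which is a purely geometric statement. A curve in it would have closure meeting the ample hypersurface $C^*$ in an orbit-boundary point, i.e.\ in a hyperplane section with a point of multiplicity $\geq d-1$, which your hypothesis excludes.
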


Equations for dual varieties can be computed explicitly, see, e.g. \cite{zbMATH01886745} for the case of plane curves. Thus, Theorems \ref{thmgenusgreaterone} and \ref{thmrationalnodal} yield families of explicit, geometrically irreducible divisors $D \subset \mathbb{P}^n$ such that any set of $(D,S)$-integral points in $\mathbb{P}^n$ is Zariski degenerate, and many examples with only finitely many $(D,S)$-integral points.
We give a concrete example.

\begin{Example} \label{ex:fermatcubicdual} The dual of the Fermat plane cubic
\[
C:\; X^3+Y^3+Z^3=0 \subset \mathbb P^2
\]
is the sextic $C^* \subset \check{\mathbb P}^2$ with homogeneous dual coordinates $[U:V:W]$ given by the equation
\[
U^{6}+V^{6}+W^{6}-2\bigl(U^{3}V^{3}+U^{3}W^{3}+V^{3}W^{3}\bigr)=0.
\]
This is geometrically irreducible and by Theorem \ref{thmgenusgreaterone},
$(\mathbb{P}^2, C^*)$ is arithmetically hyperbolic.
\end{Example}

Our next result gives a definitive positive answer to a question of Achenjang and Morrow \cite[Question 1]{MR4749147}, who ask: Which smooth projective varieties $X$ admit geometrically irreducible divisors $D$ whose complements admit only Zariski non-dense or finitely many integral points?
The case of $\dim X = 1$ is fully understood: it 
is necessary and sufficient for the genus of $X$ to be at least $1$,
by Siegel's work.
For $\dim X \geq 2$, we show that every $X$
admits a geometrically irreducible divisor $D$ for which $(X,D)$ is arithmetically hyperbolic.

\begin{Theorem}[see Corollary \ref{cor:morrowachen}]\label{thmam}
Let $X$ be any geometrically normal, projective $k$-variety
with $\dim X\ge 2 $. Then there exists a geometrically irreducible divisor $D \subset X$ such that the pair $(X,D)$ is arithmetically hyperbolic.
\end{Theorem}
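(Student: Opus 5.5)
The plan is to reduce to the case of projective space via a finite map and then pull back one of the divisors supplied by Theorem~\ref{thmgenusgreaterone}. Set $n=\dim X\ge 2$. By Noether normalization (projecting from a generic linear center, after embedding $X$ projectively), there is a finite surjective $k$-morphism $\pi\colon X\to\mathbb{P}^n$. Inside $\check{\mathbb{P}}^n\cong\mathbb{P}^n$, Theorem~\ref{thmgenusgreaterone} gives divisors $D_0=C^*$ with $(\mathbb{P}^n,D_0)$ arithmetically hyperbolic. Here $C$ is a nondegenerate smooth curve of genus $\ge1$ over $k$, for instance an elliptic normal curve of degree $n+1$.

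Finiteness pulls back along finite maps. Choose integral models of $X$ and $\mathbb{P}^n$ over $\mathcal{O}_{k,S}$, enlarging $S$ if necessary, such that $\pi$ extends to a finite morphism. Then an integral point of $X\setminus\pi^{-1}(D_0)$ maps to an integral point of $\mathbb{P}^n\setminus D_0$. Arithmetic hyperbolicity is insensitive to enlarging $S$ and to finite extensions of $k$, assuming the paper's Definition~\ref{def:arithhyp} is stable in this way. So the image set is finite, and since $\pi$ has finite fibers the original set is finite as well. Thus $(X,\pi^{-1}D_0)$ is arithmetically hyperbolic. The same argument applies to any effective divisor $D\supseteq \pi^{-1}(D_0)$ supported on $\pi^{-1}(D_0)$, because shrinking the complement only shrinks the set of integral points.

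The main obstacle is geometric irreducibility of $\pi^{-1}(D_0)$. My first attempt is to exploit the large family of choices. We may replace $D_0$ by $g(D_0)$ for $g\in \mathrm{PGL}_{n+1}(k)$, since this is again the dual of a nondegenerate curve. Over $\bar k$, consider the finite cover $\pi$ restricted over $g(D_0)$. A Bertini-type irreducibility theorem for preimages of a general translate of an irreducible subvariety under a finite map, in the style of Fulton--Hansen or Kleiman, should give that $\pi^{-1}(g D_0)$ is irreducible for $g$ in a dense open subset of $\mathrm{PGL}_{n+1}$. The map $\mathrm{PGL}_{n+1}\times X\to\mathbb{P}^n$, $(g,x)\mapsto g^{-1}\pi(x)$, is smooth with irreducible fibers over the relevant locus. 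Its preimage of $D_0$ is therefore irreducible, and the generic fiber over $\mathrm{PGL}$ is geometrically irreducible; normality of $X$ and $n\ge2$ enter here. Since $\mathrm{PGL}_{n+1}(k)$ is Zariski dense, a $k$-rational $g$ exists.

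An alternative route, if the Bertini step is delicate, is to choose $C$ so that $\pi^{-1}(D_0)$ directly equals the dual-type locus of a pulled-back family. This is the step I expect to require the most care. In particular, one must check that the irreducible closed set obtained from the incidence correspondence has generic fiber irreducible and not merely connected, which uses that $\mathrm{PGL}$-translates act transitively on $\mathbb{P}^n$. Finally, take $D=\pi^{-1}(gD_0)$ with its reduced structure.
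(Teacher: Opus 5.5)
Your proposal follows the paper's proof of Corollary~\ref{cor:morrowachen}. Both use a finite map from $X$ to projective space, take the dual $C^*$ of a nondegenerate genus-one curve from Theorem~\ref{thm:deg4}, choose a general $k$-rational $\mathrm{PGL}$-translate so the pullback is geometrically integral, and transfer finiteness along the quasi-finite map $X\setminus D\to\mathbb{P}^n\setminus gC^*$; the paper only adds a linear embedding $\mathbb{P}^d\hookrightarrow\mathbb{P}^n$ to get $D\in|sH|$ for every even $s\ge 2d+2$. The weak point is your own sketch of the irreducibility step, which you should replace by the cited theorem: irreducibility of $\{(g,x):\pi(x)\in gD_0\}$ gives only irreducibility, not geometric irreducibility, of its generic fiber over $\mathrm{PGL}_{n+1}$, and normality of $X$ cannot repair this because $C^*$ is singular in codimension one. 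The paper instead combines Kleiman's transversality theorem (the general pullback is locally integral) with connectedness of ample divisors on $S_2$ schemes of dimension $\ge 2$ (Stacks Project, Tag 0FD9).
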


\subsection{Degenerating classifying maps} To illustrate our method, we present a simplified version of our strategy and demonstrate it in an easy example.

\begin{Strategy} \label{strategy} The following approach propagates arithmetic finiteness using moduli spaces.
\begin{enumerate}
\item Choose a moduli space $\mathcal M$ with finitely many integral points, admitting a geometrically meaningful compactification $\bar{\mathcal M}$ whose boundary $\bar{\mathcal M}\setminus\mathcal M$ parametrizes suitable degenerations. In this paper, we are primarily
interested in variants of moduli of marked curves.

\item Construct a morphism $\phi \colon X \to \bar{\mathcal{M}}$ such that
\[
\phi^{-1}(\bar{\mathcal{M}}\setminus\mathcal{M}) = D,
\]
i.e. produce a family of objects of $\mathcal{S}$ over $X$ degenerating precisely along $D$.

\item Analyze the fibers of $\phi|_{X\setminus D}$ to deduce non-density or finiteness of integral points on $X\setminus D$ (for instance, if $\phi|_{X\setminus D}$ is non-constant, integral points cannot be dense).
\end{enumerate}
\end{Strategy}

\begin{Example} \label{ex:introsample} Using Strategy \ref{strategy}, we outline the proof of Theorem \ref{thmgenusgreaterone} when $g \geq 2$, $n=2$ and $C$ is smooth. \begin{enumerate}

 \item If $C \subset \mathbb{P}^2$ is a smooth projective plane curve of degree $d \geq 4$, and if $\mathcal{M}$ is the Hilbert scheme of \'etale subschemes of degree $d$ in $C$, then $\mathcal{M}$ has only finitely many integral points (see, e.g., \ref{rem:hilbcurveAH}). We compactify this $\mathcal{M}$ by using the Hilbert scheme of \emph{all} degree $d$ subschemes of $C$, call it $\bar{\mathcal{M}}$. 
 
 \item Consider the natural map $\check{\mathbb{P}}^2 \to \bar{\mathcal{M}}$ which sends $[L] \mapsto [L \cap C]$. By definition, $\phi$ degenerates precisely over $C^*$. 
 
 \item The map $\phi|_{\mathbb{P}^2 \setminus C^*}$ is quasi-finite (in fact, injective) since two different lines cannot share a length $d \geq 4$ subscheme. Thus, the finiteness on $\mathcal{M}$ can be transported to $\mathbb{P}^2 \setminus C^*$. \qed
 \end{enumerate}

 \end{Example}

The third step of our strategy is key: to ascend arithmetic properties along $\phi|_{X \setminus D} \colon X \setminus D \to \mathcal{M}$ one at least needs $\phi|_{X \setminus D}$ to be non-constant. Of course, since its image touches the boundary $\phi$ will never be constant, so this seems easy. However, moduli spaces $\bar{\mathcal{M}}$ are often highly non-separated and so $\phi|_{X \setminus D}$ could still be constant even if $\phi$ isn't. The underlying reason is that the objects in the moduli problem may have too much symmetry, as in the following example.

\begin{Example} \label{ex:constant} Consider the moduli stack of $r$-marked genus zero curves \emph{up to linear isomorphism} (see also \ref{defhypersurfaces}), $\mathcal{C}_{r,1}=[\mathbb{P}^r/\mathrm{PGL}_2]$. The family parametrized by $\mathbb{P}^1_{[\lambda, \mu]}$ given by $V(\lambda x^r+\mu y^r) \subset \mathbb{P}^1_{[x,y]} \times \mathbb{P}^1_{[\lambda, \mu]}$ yields a nonconstant map $\mathbb{P}^1_{[\lambda, \mu]} \to \mathcal{C}_{r,1}$ but $\phi|_{\mathbb{P}^1_{[\lambda, \mu]}\setminus \{0,\infty\}}$ is constant. \end{Example}

For the $g \geq 2$ case of Theorem \ref{thmgenusgreaterone},
the
Hilbert scheme of closed subschemes of
a genus $g$ curve $C$
is certainly a separated moduli space.
However, we also need the fact that the interior parameterizing \'etale subschemes
is arithmetically hyperbolic (see Remark \ref{rem:hilbcurveAH}).
The analogous interior is never arithmetically hyperbolic
for genus $g \leq 1$ due to the infinite automorphism
group of the curve $C$. Indeed, these symmetries yield 
infinitely many integral points on the Hilbert scheme,
starting from a single integral point
(Remark \ref{rem:hilbcurve_low_genus_not_AH}).
This phenomenon can be removed after dividing out
by these infinite automorphism groups, at the cost of introducing
non-separatedness toward the boundary
as in Example \ref{ex:constant}. Then we establish arithmetic hyperbolicity for the resulting moduli stacks using the work of Javanpeykar-Loughran \cite{zbMATH07360996} (see Corollary \ref{cor:binaryformsAH}). In this way, we obtain the following result.

\begin{Theorem}[see Theorem \ref{ThmMainCriterion}] \label{ThmMainCriterion:intro}
Let $k$ be a number field, and suppose $X$ is a geometrically integral separated scheme of finite type over $k$. Let $\pi\colon P\to X$ be a proper and smooth morphism of schemes whose geometric fibers are smooth projective connected curves of a fixed genus $g$.
Fix an integer $r \geq 2$
if $g \geq 1$,
and an integer $r \geq 4$ if $g = 0$.

Let $I \subseteq P$ and $\Delta\subsetneq X$ be closed subschemes such that above $X \setminus \Delta$, the map $\pi|_I$ is finite \'etale of degree $r$. Consider the following properties:
\begin{itemize}
\item[(i)] There exists a geometric point $x\in \Delta(\bar{k})$ above which $I$ is finite and flat, and the scheme-theoretic fiber $I_x \to \Spec \bar{k}$ is non-reduced. If $g = 0$, we assume all points in $I_x$ have multiplicity $\leq r-2$.
\item[(ii)] The map $\pi|_I$ is finite flat and for every geometric point $x\in \Delta(\bar{k})$, the scheme-theoretic fiber $I_x \to \Spec \bar{k}$ is non-reduced. If $g = 0$, we assume all points in $I_x$ have multiplicity $\leq r-2$. Suppose also that $\Delta$ meets all closed curves in $X$ (e.g. if $X$ is proper and $\Delta$ is given by an ample divisor). \end{itemize}
If (i) (resp. (ii)) holds then every set of $(\Delta,S)$-integral points
on $X$ is Zariski degenerate (resp. finite).
\end{Theorem}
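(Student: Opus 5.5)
The plan is to carry out Strategy \ref{strategy} with a moduli stack $\mathcal{M}$ of pairs $(Q, Z)$, where $Q$ is a smooth proper genus $g$ curve and $Z\subset Q$ is a finite étale subscheme of degree $r$; for $g\ge 2$ these are marked curves up to isomorphism, and for $g\le 1$ we divide out by the automorphism group of the curve, as in the discussion before the statement. Its partial compactification $\bar{\mathcal M}$ parametrizes pairs with $Z$ finite flat of degree $r$, allowed to be non-reduced but with multiplicities $\le r-2$ when $g=0$. This is the stack appearing in Corollary \ref{cor:binaryformsAH}, which is where Javanpeykar--Loughran enters.

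The family $(P, I)$ then gives a classifying morphism $\phi\colon U\to\bar{\mathcal M}$. Here $U$ is $X$ in case (ii). In case (i) it is the open locus where $\pi|_I$ is finite flat with the multiplicity bound; this locus contains $X\setminus\Delta$ and the chosen point $x$. By construction, $\phi^{-1}(\mathcal M)\supseteq X\setminus\Delta$.

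The first input is that $\mathcal M$ is arithmetically hyperbolic:
\begin{itemize}
\item for $g\ge 2$ this is Faltings' finiteness for curves together with finiteness of the relevant Hilbert scheme points (Remark \ref{rem:hilbcurveAH});
\item for $g=1$ it reduces to Siegel/Shafarevich-type finiteness for marked elliptic curves modulo translation;
\item for $g=0$ it is Corollary \ref{cor:binaryformsAH}.
\end{itemize}
Given an $S$-integral point of $X\setminus\Delta$, I would spread out $(P,I)$ over $\mathcal{O}_{k,S'}$ for a fixed enlarged $S'$ and conclude that $\phi$ maps integral points of $X\setminus\Delta$ to integral points of $\mathcal M$. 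Consequently, integral points of $X\setminus\Delta$ lie in finitely many fibres of $\phi|_{X\setminus\Delta}$.

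Zariski degeneracy in case (i) then follows once $\phi|_{X\setminus\Delta}$ is not constant. This is the key step, and it is exactly where the non-separatedness issue of Example \ref{ex:constant} arises. Suppose $\phi|_{X\setminus\Delta}$ were constant with value $(Q_0,Z_0)$. After a finite étale base change, the family over $X\setminus\Delta$ becomes isotrivial with monodromy in the finite group $\mathrm{Aut}(Q_0,Z_0)$. The stabilizer is finite: for $g=0$ this uses $r\ge 3$, for $g=1$ the quotient by translations, and for $g\ge2$ it is automatic.

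Pick a curve $T$ through $x$ meeting $X\setminus\Delta$, with normalization $\tilde T$. Over $\tilde T$ minus the boundary, after a finite cover, the pair $(P,I)$ is constant. Since $P$ is smooth proper over all of $\tilde T$, the curve family extends, and by separatedness of the Isom scheme of smooth curves ($g\ge2$) it stays trivial; for $g\le1$, after killing translations, $P_{\tilde T}\cong Q_0\times \tilde T$. Then $I$ is a flat family of degree $r$ subschemes of $Q_0$ that is constant (up to the finite group) generically. By properness of the Hilbert scheme its limit is unique, hence reduced, which contradicts the non-reducedness of $I_x$.

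The delicate case is $g\le 1$. There the identification over $\tilde T$ is only up to $\mathrm{PGL}_2$ or translations, so the limit could be a $\mathrm{PGL}_2$-degeneration. The multiplicity bound $\le r-2$ rules out the limits of the form $x^r+\lambda y^r$ seen in Example \ref{ex:constant}. Concretely, I would show that $\mathrm{PGL}_2$-orbits of binary forms with all root multiplicities $\le r-2$ (in fact $<r/2$ stable, and the general semistable case handled separately) are closed in the relevant locus, so a constant family cannot specialize to a non-reduced one there. I expect this step to be the main obstacle.

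For case (ii), suppose the integral points were infinite. Their Zariski closure then contains a positive-dimensional component $W$ contracted by $\phi$ on $W\setminus\Delta$, and we may choose a curve $T\subset W$. Since $\Delta$ meets every curve, $T\cap\Delta\neq\emptyset$. The non-constancy argument above, applied along $T$ at a point of $T\cap\Delta$ (where $I$ is non-reduced by hypothesis), shows $\phi|_{T\setminus\Delta}$ is non-constant, a contradiction. Hence each fibre meets the integral points in a finite set, and finitely many fibres give finiteness.
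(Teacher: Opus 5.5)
Your reduction is the same as the paper's: the classifying map $X\setminus\Delta\to\mathcal{M}_{g,\Sigma_r}\to M_{g,\Sigma_r}$, Corollary \ref{cor:binaryformsAH} with Lemma \ref{lem:nonconstant}, restriction to a curve through a degenerate fibre, and ``$\Delta$ meets every closed curve'' for quasi-finiteness in (ii). For $g\ge 1$, your non-constancy argument is a valid algebraic substitute for the paper's invariant-metric proximity function (Corollary \ref{corollary:prox_genus_geq_1}, Lemma \ref{lemma:analytic_degen_classifying_map}). Once the pair is trivialized over a finite cover $T'^\circ$, the isomorphism $P_{T'^\circ}\cong Q_0\times T'^\circ$ extends over the smooth curve $T'$. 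This is because $\underline{\mathrm{Aut}}(Q_0)$ is proper: finite for $g\ge 2$, and an extension of a finite group by the elliptic curve $\mathrm{Pic}^0(Q_0)$ for $g=1$. So you need properness of the Isom scheme, not just separatedness. Separatedness of $\mathrm{Hilb}^r_{Q_0}$ then forces the fibre over $x$ to be $Z_0$. In particular $g=1$ is not delicate: translations form a proper group and cannot produce a degeneration.

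The genuine gap is $g=0$, the only case where non-separatedness (Example \ref{ex:constant}) actually matters, and you leave its key step unproved. You need the following: if $\mathcal{I}\subset\mathbb{P}^1_R$, with $R=\mathbb{C}[[t]]$, is flat of degree $r$ with generic fibre $h\cdot Z_0$ for some $h\in\mathrm{PGL}_2(\mathbb{C}((t)))$ and $Z_0$ reduced, then $\mathcal{I}_0$ is either projectively equivalent to $Z_0$ or has a point of multiplicity $\ge r-1$.

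The GIT route you sketch cannot give this. Stability of $Z_0$ only says its orbit is closed in $\mathrm{Hilb}^{ss}_{r,1}$, which rules out limits with all multiplicities $\le r/2$. So the strictly semistable case is already covered, contrary to your parenthetical. But the hypothesis allows a limit with a point of multiplicity $m$ satisfying $r/2<m\le r-2$, which can occur as soon as $r\ge 5$. Such a limit is unstable, and GIT says nothing about whether it lies in the orbit closure.

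The paper closes this gap with the cross-ratio proximity function. Lemma \ref{lemma:prox_limit_P^1}, together with path lifting (Lemma \ref{lemma:analytic_path_lifting}), shows that some cross-ratio of four lifted points tends to $0$ when the limit has a multiple point but none of multiplicity $\ge r-1$. Along an isotrivial family that cross-ratio would be constant, a contradiction.

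In your algebraic framework the same conclusion follows from the Cartan decomposition $h=k_1\,\mathrm{diag}(t^m,1)\,k_2$ with $k_i\in\mathrm{GL}_2(R)$ and $m\ge 0$. Here $k_2\cdot Z_0$ consists of $r$ $R$-sections with distinct specializations. If $m>0$, the map $[x:y]\mapsto[t^m x:y]$ sends every section with $y(0)\ne 0$ (all but at most one) to $[0:1]$, so after applying $k_1(0)$ the limit has a point of multiplicity $\ge r-1$. If $m=0$, the limit is $k_1(0)k_2(0)Z_0$, which is reduced. With this lemma supplied, your proof goes through.
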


We consider two methods for controlling
the boundary degeneration.
First, to obtain a version of Theorem \ref{thmrationalnodal}
with \emph{uniform} Zariski non-degeneracy
(see Proposition \ref{prop:GIT:uniformly_non-degen}),
we use the (non-separated, Artin) moduli stack of $r$-marked genus zero curves \emph{up to linear isomorphism}, $\mathcal{C}_{r,1}$.
We control the boundary degeneration of the classifying map
$\phi|_{X\setminus D}$ using geometric invariant theory (GIT)
and semi-stable degenerations.
For the general case of Theorems \ref{thmgenusgreaterone}
and \ref{thmrationalnodal}, however, we use a purely
analytic degeneration argument for marked points
on isotrivial families of curves (Section \ref{sec:one-param_degen}).
This yields results which are beyond what
we are able to prove using the GIT method.

\subsection{Previous work} We contrast our construction with those of earlier works.

\begin{enumerate} \label{previous work}

\item One of our main results (Theorem \ref{thmgenusgreaterone})
implies a result from an (unpublished) thesis of one of the authors \cite[{Theorem 2.2.4}]{rchenthesis}, which gives arithmetic hyperbolicity of duals of smooth plane curves $C$ of non-prime degree $d \geq 6$,
with a prohibition on higher-order tangents lines to $C$,
of order $d$ or $d - 1$
(such as \cite[{Example 2.2.10}]{rchenthesis}).
While the non-prime degree condition can be removed by a variant on loc. cit., the prohibition on high order tangents
seems essential for the degeneration method of loc. cit..
We thus need different degeneration constructions to
prove our main theorems such as Theorem \ref{thmgenusgreaterone},
which allows arbitrary smooth plane curves of degree $d \geq 3$
(arithmetic hyperbolicity fails for the remaining smooth plane curves).

Besides our constructions, we are not aware of
comparably \emph{explicit} examples of geometrically irreducible divisors
$D\subseteq \mathbb{P}^n$ for which one can prove Zariski degeneration or finiteness of sets of integral points (e.g.
Example \ref{ex:fermatcubicdual}).
Previous results of this sort are under the assumption that the divisor $D$ is \emph{general} in a suitable family.

\item Zannier \cite[Sec. 2]{MR2135140} constructs examples of branch loci in $\mathbb{P}^n$ which arise from restricting linear projections $\mathbb{P}^{n+1} \dashrightarrow \mathbb{P}^n$ (equivalently, $\mathbb{P}(\mathcal{O}\oplus \mathcal{O}(1)) \to \mathbb{P}^n)$ to hypersurfaces. Similarly, some of our results can be deduced by studying the $\mathbb{P}^1$-bundle $\mathbb{P}(\Omega_{\mathbb{P}^2}) \to \check{\mathbb{P}}^2$ and applying Theorem \ref{ThmMainCriterion} (see Remark \ref{rem:grasslines}). In Example \ref{ex:projections1}, we show that Theorem \ref{ThmMainCriterion} also recovers Zannier's examples.

\item Faltings \cite{MR1975455}, Zannier \cite[Sec. 3]{MR2135140}, and Levin \cite[Sec. 13]{MR2552103} construct examples in $\mathbb{P}^2$ which arise from a larger class of projections from surfaces $X \hookrightarrow \mathbb{P}^N \dashrightarrow \mathbb{P}^2$. Although our examples also arise as the branch locus of a finite flat cover of the plane, our divisors cannot be realized using their construction (see Remark \ref{rem:branchedcoverdual}).

\item Faltings' family of examples $\mathbb{P}^2 \setminus D$ in \cite{MR1975455} do not admit embeddings into semi-abelian varieties, even after a finite \'etale cover. However, Vojta has reproduced Faltings results by embedding open subsets of such covers into semi-abelian varieties, thereby reducing them to older arithmetic results, see \cite{vojta2009transplantingfaltingsgarden}. Our results eventually reduce
to the arithmetic hyperbolicity of some punctured curves $C$ (Lang--Parry--Mahler--Siegel), the moduli stack of curves of genus $g \geq 2$ (Faltings), the moduli stack of elliptic curves (Shafarevich), and, more generally, the moduli stack of principally polarized abelian varieties (Faltings), see Example \ref{ex:AHcurves}.
The key new input is our degeneration strategy,
sketched in Strategy \ref{strategy},
which we use to lift these classical finiteness results to our settings.
\end{enumerate}




\subsection{Conventions} By a \emph{variety} over any field $k$, we will always mean a geometrically integral separated scheme of finite-type over $k$. If $k$ is a field containing a ring $A$, and if $X$ is a finitely presented $k$-stack, then a \emph{model} of $X$ over $A$ is a finitely-presented $A$-stack $\mathfrak{X}$ with $\mathfrak{X} \times_{A} \Spec k \cong X$. Generally, if $X$ is a $S$-scheme and $T \to S$ is a morphism, we will denote by $X_S$ the base change. The one exception to this is that the base change of a $k$-variety to a $k$-algebra $A$ will be denoted by $X_A$. We sometimes check geometric properties of schemes, stacks, and morphisms (e.g. non-constancy) between them which are all defined over a number field $k$ by passing to the algebraic closure $\bar{k}$. However, we will be clear about which fields we are working over. If $\mathcal{G}$ is a groupoid (for example, the $S$-points of a stack), we will denote by $\pi_0(\mathcal{G})$ the set of isomorphism classes of $\mathcal{G}$. If $V$ is a vector bundle on a scheme $S$, then $\mathbb{P}(V)/S$ parametrizes sub-line-bundles of $V$.


\section{Preliminaries} 

\subsection{Integral points}\label{SecIntegralPoints}

The following definition is central to the article.

\begin{definition} \label{def:integral points} Let $k$ be a number field, $S$ a finite set of primes of $\mathcal{O}_k$, and let $X$ be a proper variety over $k$.  

\begin{enumerate} \item Fix a proper integral model $\mathfrak{X}$ over $\Spec \mathcal{O}_{k,S}$. Given a closed subscheme $D\subseteq X$, \emph{the set $(D,S)$-integral points of $X$ relative to $\mathfrak{X}$} is the set of rational points $x\in X(k)$ such that the closures of $x$ and $D$ on $\mathfrak{X}$ do not meet.

\item We say that a subset $\Sigma\subseteq X(k)$ is \emph{a set $(D,S)$-integral points of $X$} if there is a proper integral model $\mathfrak{X}$ as above such that $\Sigma$ only consists of $(D,S)$-integral points of $X$ relative to $\mathfrak{X}$.

\end{enumerate} \end{definition}

\begin{remark}
In Definition \ref{def:integral points},
note that $D$ is not required to be a divisor.
If $\overline{D} \subseteq \mathfrak{X}$
denotes the closure of $D$, the set of rational points
appearing in Definition \ref{def:integral points}(1) is canonically identified with $(\mathfrak{X} \setminus \overline{D})(\mathcal{O}_{k,S})$.
Note also $X(k)=\mathfrak{X}(\mathcal{O}_{k,S})$ since $\mathfrak{X}$ is proper, so the closure of a rational point is an $\mathcal{O}_{k,S}-$ point.
\end{remark}

\begin{remark} \label{rem:intpointsweilvojta} There is an alternative definition of sets of integral points using Weil functions, which has some indeterminacy up to the choice of error terms \cite{zbMATH03985400}. One can see that these definitions are equivalent in the following sense:

If $\Sigma\subseteq X(k)$ is a set of $(D,S)$-integral points in the sense of Definition \ref{def:integral points}, then there is a choice of Weil functions such that it is a set of $(D,S)$-integral points in the sense of Weil functions. Conversely, if $\Sigma\subseteq X(k)$ is a set of $(D,S)$-integral points in the sense of Weil functions, then there is a possibly larger finite set of primes $T\supseteq S$ and a choice of integral model $\mathfrak{X}$ over $\mathcal{O}_{k,T}$ such that $\Sigma$ consists of $(D,T)$-integral points relative to $\mathfrak{X}$. (One needs to allow a possibly larger $T$ for two reasons: the construction of the model might need spreading-out the generic fiber, and one needs to account for the choice of error term in the Weil functions.) \end{remark}

\begin{definition} \label{def:zardeg} Let $k$ be a number field, and suppose $X$ is a proper variety over $k$.  

\begin{enumerate} 

\item Given a closed subscheme $D \subset X$, we say that \emph{every set of $(D,S)$-integral points is Zariski degenerate} if for every finite set $S$ of primes of $\mathcal{O}_k$, there is a proper closed subscheme $Y \subset X$ containing all sets of $(D,S)$-integral points of $X$.  

\item Given a closed subscheme $D \subset X$, we say that \emph{every set of $(D,S)$-integral points is uniformly Zariski degenerate} if there is a closed subscheme $Y \subset X$ such that for every finite extension $k \subset L$ and every finite set of primes $S$ of $\mathcal{O}_L$, all but finitely many $(D_L,S)$-integral points of $X_L$ lies on $Y_L$. 

\item Given a closed subscheme $D \subset X$, we say that \emph{every set of $(D,S)$-integral points is finite} if for every finite set $S$ of primes of $\mathcal{O}_k$, the set of $(D,S)$-integral points of $X$ is finite.

\end{enumerate}

In situation (1) (resp. (2)) of the above, we
say that $(X,D)$ and $X \setminus D$ have \emph{Zariski degenerate}
(resp. \emph{uniformly Zariski degenerate})
sets of $S$-integral points for all $S$.

\end{definition}

\begin{remark} By Nagata compactification and the above, we may speak of any $k$-variety
$Y$ having Zariski degenerate, uniformly Zariski-degenerate,
or finitely many $S$-integral points. \end{remark}

\begin{remark} \label{rem:onemodelworks} Note that to check that every set of $(D,S)$-points is (uniformly) Zariski degenerate (or finite), it suffices to consider only the sets of $(D,S)$-integral points with respect to one fixed proper integral model $\mathfrak{X}$ over some $\mathcal{O}_{k,S'}$. Indeed, any other such model agrees with $\mathfrak{X}$ after perhaps enlarging $S'$ and so if (uniform) Zariski degeneracy (or finiteness) holds for every set of $(D,S)$-integral points with respect to $\mathfrak{X}$ it does so for any other proper integral model.  \end{remark}

\begin{Example} \label{ex:uniformzdegen} Uniform Zariski degeneracy is stronger than Zariski degeneracy over every number field. Indeed, in the latter, one is allowed to change the closed subset $Y$ with each field extension. For example, consider $\mathbb{P}^1 \times C$ where $C$ is a smooth genus $\geq 2$ curve over $\mathbb{Q}$: over any fixed number field $k$, all the rational points lie on the fibers $\mathbb{P}^1\times p_i$ for the finitely many rational points $p_i \in C(k)$. This is a proper closed subset but one cannot choose it uniformly as the number field $k$ grows. \end{Example}

In order to accommodate moduli spaces, we give a definition of arithmetic hyperbolicity that includes stacks. This definition is due to Javanpeykar-Loughran (see \cite[Def. 4.1]{zbMATH07360996}) but we modify it so that it is a $k$-rational property.

\begin{definition} \label{def:arithhyp} Let $X$ denote a finite-type separated Deligne--Mumford stack over a number field $k$. We say $X$ is \emph{arithmetically hyperbolic} if for any number field $L$ containing $k$, there is a finite set $S$ of primes of $\mathcal{O}_L$ along with a model $\mathfrak{X}$ over $\Spec \mathcal{O}_{L,S}$ of $X_L$ so that the set $\mathrm{Im}[\pi_0(\mathfrak{X}(\mathcal{O}_{L,S'})) \to \pi_0(\mathfrak{X}(L))]$ is finite for all finite sets of places $S'$ containing $S$. \end{definition}

If we restrict to separated Deligne--Mumford
models $\mf{X}$ in Definition \ref{def:arithhyp},
we can simply consider $\pi_0(\mathfrak{X}(\mathcal{O}_{L,S'}))$
instead of the image in $\pi_0(\mathfrak{X}(L))$,
by Proposition \ref{prop:AHequiv}(4) below.

\begin{Example} If $X$ is a proper $k$-variety with a closed subscheme $D \subset X$, then the reader can check that the following are equivalent
\begin{enumerate} \item For any finite extension $L/k$, and any finite set $S$ of primes of $\mathcal{O}_L$, every set of $(D_L,S)$-integral points of $X$ is finite.
\item $X \setminus D$ is arithmetically hyperbolic. 
\end{enumerate} \end{Example}

We compare our definition with that of \cite{zbMATH07360996} in the following proposition (see also \cite[Thm. 4.23]{zbMATH07360996}).

\begin{proposition} \label{prop:AHequiv} If $X$ denotes a finite-type separated Deligne--Mumford stack over a number field $k$, then the following statements are equivalent. 

\begin{enumerate} 

\item $X$ is arithmetically hyperbolic. 

\item $X_{\bar{\mathbb{Q}}}$ is arithmetically hyperbolic in the sense of \cite[Def. 4.1]{zbMATH07360996}: there is a $\mathbb{Z}$-finitely generated subring $A \subset \bar{\mathbb{Q}}$ and a model $\mathfrak{X}$ over $A$ of $X_{\bar{\mathbb{Q}}}$ such that for all $\mathbb{Z}$-finitely generated subrings $A' \subset \bar{\mathbb{Q}}$ containing $A$, the set $\mathrm{Im}[\pi_0(\mathfrak{X}(A')) \to \pi_0(\mathfrak{X}(\bar{\mathbb{Q}}))]$ is finite. 

\item For any $\mathbb{Z}$-finitely generated subring $A \subset \bar{\mathbb{Q}}$ and any model $\mathfrak{J}$ of $X_{\bar{\mathbb{Q}}}$ over $A$ with fraction field $K$ the set $\mathrm{Im}[\pi_0(\mathfrak{J}(A)) \to \pi_0(\mathfrak{J}(K))]$ is finite.

\item For any $\mathbb{Z}$-finitely generated subring $A \subset \bar{\mathbb{Q}}$ which is integrally closed, and all models of $\mathfrak{Y}$ of $X_{\bar{\mathbb{Q}}}$ with finite diagonal, the set $\pi_0(\mathfrak{Y}(A))$ of $A$-integral points is finite. 

\end{enumerate}\end{proposition}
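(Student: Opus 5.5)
I will prove the cycle of implications (3) $\Rightarrow$ (4) $\Rightarrow$ (1) $\Rightarrow$ (2) $\Rightarrow$ (3). Two standard facts do most of the work. First, spreading out: any two finitely presented models of $X_{\bar{\mathbb{Q}}}$ over finitely generated subrings of $\bar{\mathbb{Q}}$ become isomorphic after enlarging the base to a larger finitely generated subring. Second, every finitely generated $\mathbb{Z}$-subring $A\subset\bar{\mathbb{Q}}$ lies inside some $\mathcal{O}_{L,S'}$, where $L=\mathrm{Frac}(A)$ is a number field and $S'$ is finite. This holds because $A$ is generated by finitely many algebraic numbers, and each has only finitely many primes in its denominator.

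\textbf{(3) $\Rightarrow$ (4).} If $\mathfrak{Y}$ has finite diagonal and $A$ is integrally closed, I claim $\pi_0(\mathfrak{Y}(A))\to\pi_0(\mathfrak{Y}(K))$ has finite fibers, so finiteness of the image gives finiteness of the source. Take two $A$-points $y,y'$ that become isomorphic over $K$. The sheaf $\mathrm{Isom}(y,y')$ is finite over $\Spec A$, since it is a base change of the diagonal. It has a $K$-point, and the closure of that point is finite and birational over the normal domain $A$, hence an isomorphism onto $\Spec A$. So $y\cong y'$ over $A$. Thus the map is in fact injective.

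\textbf{(4) $\Rightarrow$ (1).} Given $L\supseteq k$, spread $X_L$ out to a separated Deligne--Mumford model with finite diagonal over some $\mathcal{O}_{L,S}$. This is possible because $X$ is separated Deligne--Mumford, so its diagonal is proper and quasi-finite, hence finite, and this spreads out. Now apply (4) with $A=\mathcal{O}_{L,S'}$, which is integrally closed. This gives finiteness of $\pi_0$, hence finiteness of its image in $\pi_0(\mathfrak{X}(L))$.

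\textbf{(1) $\Rightarrow$ (2).} Take $L=k$ with its model $\mathfrak{X}$ over $\mathcal{O}_{k,S}$, and set $A=\mathcal{O}_{k,S}$. Given a finitely generated $A'\supseteq A$, choose a number field $L\supseteq\mathrm{Frac}(A')$ and $S'$ with $A'\subset\mathcal{O}_{L,S'}$. Any $A'$-point then gives an $\mathcal{O}_{L,S'}$-point of $\mathfrak{X}_{\mathcal{O}_{L,S'}}$. Applying (1) over $L$, its model agrees with $\mathfrak{X}_{\mathcal{O}_{L,S}}$ after enlarging $S$, so we may take that model. The images in $\pi_0(\mathfrak{X}(L))$ are then finite. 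Finally, pass from $L$ to $\bar{\mathbb{Q}}$. The map $\pi_0(\mathfrak{X}(L))\to\pi_0(\mathfrak{X}(\bar{\mathbb{Q}}))$ may be non-injective (twists), but this does not matter: the image in $\pi_0(\mathfrak{X}(\bar{\mathbb{Q}}))$ factors through the finite image in $\pi_0(\mathfrak{X}(L))$.

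\textbf{(2) $\Rightarrow$ (3).} This is the main obstacle. Given an arbitrary model $\mathfrak{J}$ over $A$, use spreading out to find a finitely generated $A''$ containing both $A$ and the ring from (2) over which $\mathfrak{J}\cong\mathfrak{X}$. An $A$-point of $\mathfrak{J}$ base changes to an $A''$-point. By (2), the image in $\pi_0(\mathfrak{X}(\bar{\mathbb{Q}}))$ is finite, but (3) asks for finiteness in $\pi_0(\mathfrak{J}(K))$.

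To bridge this, I need that $\pi_0(\mathfrak{J}(K))\to\pi_0(\mathfrak{J}(\bar{\mathbb{Q}}))$ has finite fibers when restricted to $A$-integral points. The $K$-forms of a fixed $\bar{\mathbb{Q}}$-object are classified by $H^1(K,\mathrm{Aut})$ with $\mathrm{Aut}$ finite, since $X$ is Deligne--Mumford. Forms coming from $A$-points are unramified outside the finitely many primes not inverted in $A$. So I will invoke Hermite--Minkowski: there are only finitely many such torsors.

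The technical points to check carefully are that the automorphism group scheme is finite étale over an open of $\Spec A$, and the uniformity of the bad set of primes. I would handle both by shrinking $\Spec A$ to an open over which the inertia of $\mathfrak{J}$ is finite étale. Shrinking is harmless, since enlarging $A$ only enlarges the set of integral points.
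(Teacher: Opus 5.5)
Your cycle is correct in outline, but it is organized differently from the paper's proof. The paper does not prove (2)$\Leftrightarrow$(3)$\Leftrightarrow$(4) itself; it cites \cite[Thm. 4.23]{zbMATH07360996}. It proves only two implications:
\begin{itemize}
\item (1)$\Rightarrow$(2), by the same argument you give: compare models after enlarging $S_L$, then factor through the finite image in $\pi_0(\mathfrak{X}(L))$. Your formulation $A'\subset\mathcal{O}_{L,S'}$ is the accurate version of the paper's remark that $A'$ is an order.
\item (3)$\Rightarrow$(1), by applying (3) directly to a model of $X_L$ over $\mathcal{O}_{L,S'}$, which needs no finite diagonal.
\end{itemize}
You instead close the loop through (4)$\Rightarrow$(1), which costs one extra spreading-out of the finite diagonal. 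In exchange you reprove the cited equivalences yourself. First, you show injectivity of $\pi_0(\mathfrak{Y}(A))\to\pi_0(\mathfrak{Y}(K))$ for finite-diagonal models over normal $A$; your closure-of-a-$K$-point argument in $\mathrm{Isom}(y,y')$ is fine. Second, you bound the fibers of $\pi_0(\mathfrak{J}(K))\to\pi_0(\mathfrak{J}(\bar{\mathbb{Q}}))$ over integral points by twisting plus Hermite--Minkowski. The paper's version is shorter. Yours is self-contained and shows exactly where normality of $A$ and properness of the diagonal are used.

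One step in (2)$\Rightarrow$(3) needs repair. Finite \'etale inertia over the shrunken base $U$ is not what makes the twists uniformly unramified; what is needed is that the diagonal of $\mathfrak{J}_U$ is finite and unramified. Given that, take $A$-points $y_0,y$ that become isomorphic over $\bar{\mathbb{Q}}$. The closure of the generic torsor in $\mathrm{Isom}(y_0,y)$ is finite, flat and unramified over the Dedekind scheme $U$, hence finite \'etale, and Hermite--Minkowski applies. \'Etaleness of the inertia is then not even needed.

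With only finite \'etale inertia, the unramifiedness claim is false. Glue two copies of $B(\mathbb{Z}/2)$ over a DVR along their generic fibers via twisting by a ramified $\mathbb{Z}/2$-torsor $T$. The inertia is constant $\mathbb{Z}/2$, yet the two ``trivial'' integral points differ generically by $T$.

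Since $\mathfrak{J}_K$ is separated, a suitable $U$ exists by the same spreading-out you already use in (4)$\Rightarrow$(1). So this is a fix within your strategy, not a change of it.
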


\begin{proof} The last three statements are equivalent by \cite[Thm. 4.23]{zbMATH07360996}. We first show (1) implies (2). Note that there is a finite set of places $S$ of $k$ along with a model $\mathfrak{X}$ of $X$ over $\mathcal{O}_{k,S}$. Clearly $\mathfrak{X}$ is also a model of $X_{\bar{\mathbb{Q}}}$ over $A=\mathcal{O}_{k,S}$, and we will show it satisfies the condition in (2). Now suppose $A' \subset \bar{\mathbb{Q}}$ is a $\mathbb{Z}$-finitely generated subring containing $A$. It must be an order in a number field $L$, so we may assume $A'=\mathcal{O}_{L,S_L}$ for some finite set of places $S_L$ of $L$. We will show the stronger statement $\mathrm{Im}[\pi_0(\mathfrak{X}(\mathcal{O}_{L,S_L})) \to \pi_0(\mathfrak{X}(L))]$ is finite. By definition \ref{def:arithhyp}, after enlarging $S_L$ there is a model $\mathfrak{Y}$ over $\mathcal{O}_{L,S_L}$ of $X_L$ so that $\mathrm{Im}[\pi_0(\mathfrak{Y}(\mathcal{O}_{L,S_L'})) \to \pi_0(\mathfrak{Y}(L))]$ is finite for every $S'_L$ containing $S_L$. But $\mathfrak{Y}$ and $\mathfrak{X}_{\Spec \mathcal{O}_{L,S_L}}$ must agree after enlarging $S_L$, since they are both models of $X_L$, so the desired finiteness follows from that of $\mathfrak{X}$.

We conclude by showing (3) implies (1). Note that given any number field $L$, there is a finite set of places $S$ of $L$ so that there is a model $\mathfrak{X}$ of $X_L$ over $\mathcal{O}_{L,S}$. Now the desired finiteness follows from (3) since $\mathfrak{X}$ is also a model for $X_{\bar{\mathbb{Q}}}$ and the rings $\mathcal{O}_{L,S'}$ are $\mathbb{Z}$-finitely generated. \end{proof}

\begin{Example} \label{ex:AHcurves} There are plenty of interesting examples of arithmetically hyperbolic schemes and stacks. Here we mention just a few milestones in the theory. In fact, the arithmetic input of this paper can be reduced to (1) and (2) below (see Remark \ref{rem:hilbcurveAH} and Lemma  \ref{lem:markedgenus0ah}).
\begin{enumerate} 
\item (Lang--Mahler--Parry--Siegel \cite{zbMATH03181694, zbMATH03007856, zbMATH03059691, Siegel1921}) The punctured projective line $\mathbb{P}^1 \setminus \{0,1,\infty\}$ and a punctured elliptic curve $E \setminus \{p\}$ are both arithmetically hyperbolic.
\item (Faltings, Shafarevich \cite{MR718935, zbMATH03944028}) A smooth geometrically connected projective curve $C$ of genus $g \geq 2$, the stack of smooth projective curves $\mathcal{M}_g$ of genus $g \geq 2$, the stack of elliptic curves $\mathcal{M}_{1,1}$ and, more generally, the stack of principally polarized abelian varieties $\mathcal{A}_g$ of dimension $g \geq 2$ are all arithmetically hyperbolic. 
\item (Faltings \cite{MR1109353}, Vojta \cite{zbMATH00931987, zbMATH01275702}) The complement of an ample divisor on an abelian variety is arithmetically hyperbolic. Vojta has an extension of this result to semi-abelian varieties.

\end{enumerate} 
\end{Example}

\begin{Example} \label{ex:warning} Fix a proper $k$-variety $X$ and fix two closed subsets $D, Y \subset X$. 

\begin{enumerate} 
\item Suppose that every set of $(D,S)$-integral points on $X$ is uniformly Zariski degenerate with respect to $Y$. That is,  all but finitely many of every set of $(D,S)$-integral points must lie on $Y$. This implies that $X \setminus (D \cup Y)$ is arithmetically hyperbolic. 
\item However, if $X \setminus (D \cup Y)$ is arithmetically hyperbolic, then it does not imply that all but finitely many of a set of $(D,S)$-integral points on $X$ must lie on $Y$. An example is given by $D=\emptyset, Y=\{0,1,\infty\} \subset \mathbb{P}^1_k$: there are infinitely many $(D,S)$-points on $\mathbb{P}^1_k$ (so they cannot all lie on $Y$).

\end{enumerate}

\end{Example}

In this paper, a morphism locally of finite type
of schemes is \emph{generically finite} if there exists
a dense open subscheme in the source
which is locally quasi-finite over the target. 

\begin{lemma} \label{lem:nonconstant} Fix a number field $k$, a $k$-variety $Y$, and a finite-type separated Deligne--Mumford stack $\mathscr{M}/k$ which is arithmetically hyperbolic and with coarse moduli space $M$. If there is a morphism $c\colon Y \to \mathscr{M}$ over $k$ such that the induced map $Y \to M$ is quasi-finite (resp. generically finite, non-constant) then every set of $S$-integral points of $Y$ is finite (resp. uniformly Zariski degenerate, Zariski degenerate) for every number field $L / k$.
\end{lemma}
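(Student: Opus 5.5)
We reduce all three statements to finiteness of integral points on $\mathscr{M}$, transported along $c$. First we set up models. Fix a number field $L\supseteq k$ and a finite set $S$ of primes of $\mathcal{O}_L$. Choose a compactification $Y\subset \bar Y$ and a model of $c$: after enlarging $S$ to some $S_0$, we get a finite-type separated model $\mathfrak{Y}$ of $Y_L$ over $\mathcal{O}_{L,S_0}$, a separated Deligne--Mumford model $\mathfrak{M}$ of $\mathscr{M}_L$ with finite diagonal, and a morphism $\mathfrak{c}\colon\mathfrak{Y}\to\mathfrak{M}$ extending $c$. Spreading out gives all of this. By Remark~\ref{rem:onemodelworks}, any set of $S$-integral points of $Y$ is, up to enlarging $S$, contained in $\mathfrak{Y}(\mathcal{O}_{L,S'})$ for some finite $S'\supseteq S_0$. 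Enlarging $S$ only enlarges the sets in question, so it is harmless.

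Next we push integral points forward. Each $y\in\mathfrak{Y}(\mathcal{O}_{L,S'})$ maps to $\mathfrak{c}(y)\in\mathfrak{M}(\mathcal{O}_{L,S'})$. By arithmetic hyperbolicity, in the form of Proposition~\ref{prop:AHequiv}(4) (taking $A=\mathcal{O}_{L,S'}$, which is integrally closed), the set $\pi_0(\mathfrak{M}(\mathcal{O}_{L,S'}))$ is finite. Hence the images of the $L$-points $y$ in $\pi_0(\mathscr{M}(L))$ form a finite set. Passing to the coarse space, the images in $M(L)$ lie in a finite set $F=\{m_1,\dots,m_N\}\subset M(L)$. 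Therefore every integral point lies in $\bigcup_i f^{-1}(m_i)$, where $f\colon Y\to M$ is the induced map.

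We now treat the three cases in turn.
\begin{itemize}
\item \emph{Quasi-finite case.} Each fibre $f^{-1}(m_i)$ is finite, so the set of integral points is finite.
\item \emph{Non-constant case.} Each fibre $f^{-1}(m_i)$ is a proper closed subset of the irreducible $Y$. A finite union of such sets is again a proper closed subset, which gives Zariski degeneracy over each $L$.
\item \emph{Generically finite case.} We need a single closed set that works for all $L$. Let $U\subset Y$ be a dense open subset on which $f$ is quasi-finite, and set $Z=Y\setminus U$, which is defined over $k$. Then $f|_U$ is quasi-finite, so the quasi-finite case applied to $U$ shows that only finitely many integral points lie outside $Z$. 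Here we use that integral points of $Y$ lying in $U$ are integral points of $U$ after enlarging the boundary; more directly, each $f^{-1}(m_i)\cap U$ is finite. So all but finitely many integral points lie on $Z_L$, uniformly in $L$.
\end{itemize}

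The main obstacle is the bookkeeping with stacks in the pushforward step. We must ensure that an $\mathcal{O}_{L,S'}$-point of $\mathfrak{Y}$ yields an $\mathcal{O}_{L,S'}$-point of a model of $\mathscr{M}$ to which Proposition~\ref{prop:AHequiv} applies; this is why we choose $\mathfrak{M}$ with finite diagonal. We must also check that finiteness of $\pi_0$ of integral points gives finiteness of the images in $M(L)$. The latter holds because $\mathscr{M}(L)\to M(L)$ factors through $\pi_0$, so isomorphic objects have the same image. Finally, a $\bar k$-level statement such as quasi-finiteness over $k$ descends to $L$ because fibre finiteness is stable under base field extension.
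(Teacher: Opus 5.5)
Your proposal is correct and follows the paper's proof. Both spread $c$ and the coarse map out to models with $\mathfrak{M}$ of finite diagonal, use arithmetic hyperbolicity to get finitely many images $m_1,\dots,m_N\in M(L)$ of integral points, and conclude from the fibres of $Y\to M$. Your handling of the generically finite case, via the single $k$-rational closed set $Z=Y\setminus U$, is more explicit than the paper's. However, drop the aside that integral points of $Y$ lying in $U$ are integral points of $U$ after enlarging the boundary: this is false in general (an integral point of $Y$ can still reduce into $\bar{Z}$ at primes outside $S$). The argument goes through on your direct observation alone, namely that each $f^{-1}(m_i)\cap U_L$ is finite.
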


\begin{proof} Consider the morphism $c \colon Y \to \mathcal{M}$, and let $p\colon \mathcal{M} \to M$ denote the coarse moduli space morphism. Suppose $p \circ c$ is quasi-finite (resp. generically finite, non-constant).

Since these objects are finite-type over $k$, we may enlarge $S$ so that $Y,\mathcal{M}, M, c,$ and $p$ spread out to $\mathcal{O}_{k,S}$-flat separated
finite type integral schemes $\tilde{Y}$ and $\tilde{M}$,
an $\mathcal{O}_{k,S}$- finite-type separated algebraic stack with finite diagonal $\tilde{\mathscr{M}}$,
and morphisms $\tilde{Y} \xrightarrow{\tilde{c}} \tilde{\mathscr{M}} \xrightarrow{\tilde{p}} \tilde{M}$.
Our task is to show that the sets of $S$-integral points
$\tilde{Y}(\mathcal{O}_{L,S})$ are finite (resp. uniformly Zariski degenerate, Zariski degenerate) for finite extensions $L / k$.

Since $\mathscr{M}$ is arithmetically hyperbolic, we know that
$\pi_0(\tilde{\mathscr{M}}(\mathcal{O}_{L,S}))$ is finite.
If $g_1,...,g_l$ denote their images in $\tilde{M}(k)$, then $\tilde{Y}(\mathcal{O}_{L,S}) \subseteq Y(k)$ is contained in the fibers of the $g_i$ along the quasi-finite (resp. generically finite, non-constant) map $p \circ c$. In other words, the elements of $\tilde{Y}(\mathcal{O}_{L,S})$ must be finite (resp. uniformly Zariski degenerate, Zariski degenerate) as desired. \end{proof}

\begin{remark} See \cite[Prop. 4.17]{zbMATH07360996} for a precursor to Lemma \ref{lem:nonconstant}. \end{remark}




\section{Moduli spaces and their arithmetic} 

In this section we introduce some important moduli spaces and stacks and investigate their arithmetic properties. First we recall the classical notion of a Hilbert scheme.

\subsection{The Hilbert scheme} Given a flat projective morphism of schemes $X \to S$, the Hilbert functor $\mathrm{Hilb}_{X/S}\colon \mathrm{Sch}/S \to \mathrm{Set}$ sending $T/S \mapsto \{ S$-flat closed subschemes $H \subset X_T$ of finite presentation$\}$ is representable by a projective $S$-scheme. We will use two important examples. 

\begin{Example} \label{ex:hilbcurve} Let $C/k$ be a smooth projective geometrically connected curve over a field $k$. In this case, the open locus of $\mathrm{Hilb}_{C/k}$ consisting of $T$-\'etale closed subschemes $H \subset C_T$ of length $n$ is representable by the scheme-theoretic quotient $(C^n \setminus \Delta^{\mathrm{big}})/S_n$ where $\Delta^{\mathrm{big}}$ consists of those tuples in $C^n$ which do not have distinct entries and the symmetric group $S_n$ acts by permutation. We denote it by $\mathrm{Hilb}^{n, \mathrm{sm}}_{C/k}$. \end{Example}

\begin{Remark} \label{rem:hilbcurveAH} Let $k$ be a number field.
If $C/k$ is a smooth geometrically connected curve of genus $g \geq 2$ and $n \geq 1$ is an integer, then the scheme $\mathrm{Hilb}^{n,\mathrm{sm}}_{C/k}$ is arithmetically hyperbolic. Indeed, $\mathrm{Hilb}^{n,\mathrm{sm}}_{C/k}$ admits a finite \'etale cover by an open subscheme of $C^n$. Thus, by Example \ref{ex:AHcurves}(2) (Faltings) this open subscheme is arithmetically hyperbolic. One can then deduce the claim for $\mathrm{Hilb}^{n,\mathrm{sm}}_{C/k}$ using a result of Chevalley--Weil (see, for instance, \cite[Thm. 1.2]{zbMATH07360996}). \end{Remark}

\begin{Remark} \label{rem:hilbcurve_low_genus_not_AH} Let $k$ be a number field. If $C/k$ is a smooth geometrically connected curve of genus $g \leq 1$ and $n \geq 1$ is an integer, then the scheme $\mathrm{Hilb}^{n,\mathrm{sm}}_{C/k}$ is never arithmetically hyperbolic.
This is due to the large automorphism group of $C$, as we now explain.

In the case of genus $g = 1$,
we can consider disjoint sections $e_1, \ldots, e_n \in C(k)$
(enlarge $k$ if necessary), extend $(C,e_1)$ to an elliptic
curve $\mathcal{C}$ over $\Spec \mathcal{O}_{k,S}$ (enlarge $S$ if necessary),
assume that $e_1, \ldots, e_n \in \mathcal{C}(\mathcal{O}_{k,S})$
are still disjoint sections (enlarge $S$ if necessary),
and assume that $(C,e_1)$ has positive rank (enlarge $k$ if necessary).
Thus $\operatorname{Aut}(\mathcal{C})$ is infinite
(consider translation by a non-torsion point),
but the $S_n$-orbit of $(e_1, \ldots, e_n) \in \mathcal{C}(\mathcal{O}_{k,S})^n$
has finite stabilizer, because
any automorphism of $\mathcal{C}$ fixing $e_1$
must be one of the (finitely many) automorphisms of the elliptic curve
$(\mathcal{C}, e_1)$.
In particular, $\mathrm{Hilb}^{n,\mathrm{sm}}_{\mathcal{C} / \mathcal{O}_{k,S}}(\mathcal{O}_{k,S})$ is infinite.

In the case of genus $g = 0$, we can assume $C \cong \mathbb{P}^1$
(enlarge $k$ if necessary), and abuse notation to also denote $\mathbb{P}^1$
for the usual model over $\Spec \mathcal{O}_{k,S}$.
The case of $n = 1$ is clear, i.e. $\mathbb{P}^1(\mathcal{O}_{k,S}) = \mathbb{P}^1(k)$ is certainly infinite.
The case of $n = 2$ is also clear, i.e. $(\mathbb{P}^1 \setminus \{\infty\})(\mathcal{O}_{k,S}) = \mathbb{A}^1(\mathcal{O}_{k,S})$ is also certainly infinite.
For the case of $n \geq 3$, recall
the canonical identification of group schemes $\operatorname{PGL}_2 \cong \underline{\operatorname{Aut}}(\mathbb{P}^1)$,
and note that $\operatorname{PGL}_2(\mathcal{O}_{k,S})$ is infinite
(consider upper triangular matrices).
But any ordered tuple of disjoint sections $(e_1, \ldots, e_n) \in \mathbb{P}^1(\mathcal{O}_{k,S})$ has trivial stabilizer (if $n \geq 3$) for the (faithful) action by $\operatorname{PGL}_2(\mathcal{O}_{k,S})$.
In particular, $\mathrm{Hilb}^{n,\mathrm{sm}}_{\mathcal{C} / \mathcal{O}_{k,S}}(\mathcal{O}_{k,S})$ is infinite.
\end{Remark}

\begin{Example} \label{ex:hilbproj} Let $\pi\colon \mathbb{P}^n_S \to S$ be projective space and fix an integer $r \geq 1$. The Hilbert subfunctor $\mathrm{Hilb}_{r,n}\colon \mathrm{Sch}/S \to \mathrm{Set}$ sending $T/S \mapsto \{T$-flat closed subschemes $H \subset \mathbb{P}^n_T$ whose fibers are hypersurfaces of degree $r\}$ is representable by $\mathbb{P}(\pi_*\mathcal{O}_{\mathbb{P}^n_S}(r))$, and is the \emph{Hilbert scheme of degree $r$ hypersurfaces in $\mathbb{P}^n_S$}. We will denote by $\mathrm{Hilb}_{r,n}^{\mathrm{sm}} \subset \mathbb{P}(\pi_*\mathcal{O}_{\mathbb{P}^n_S}(r))$ the open locus parametrizing $T$-smooth subschemes. \end{Example}

\subsection{Moduli stacks of curves with a multi-section} 

The moduli stacks $\mathcal{M}_{g,n}$ of genus $g$ curves with $n$ marked (and distinct) points are well studied objects in algebraic geometry. In this section, we will relate these to another stack which will be more useful to us. 

\begin{definition} \label{defmgsigman} Let $g,n \geq 0$ be positive integers. Then we define the \emph{stack of curves of genus $g$ with an \'etale multi-section of degree $n$} to be the category fibered in groupoids $\mathcal{M}_{g,\Sigma_n}$ given by
\[\mathcal{M}_{g,\Sigma_n}(S) = \Set{ (f\colon C \to S, i\colon \Sigma \hookrightarrow C) | \begin{array}{c}
     \: \text{-$f$ is proper and smooth}\\
     \text{-the geometric fibers of $f$ are genus $g$ curves}\\
     \text{-$i$ is a closed immersion}\\
    \text{-The map $\Sigma \to S$ is finite \'etale of degree $n$}\\
  \end{array}}
\]

\noindent Morphisms in $\mathcal{M}_{g,\Sigma_n}(S)$ are $S$-isomorphisms of relative curves which preserve the given multi-sections. 

\end{definition}

\begin{remark} \label{rem:mgsigmastack} It is a standard application of descent theory to show that the category fibered in groupoids $\mathcal{M}_{g,\Sigma_n}$ is a stack.
If $g = 1$, one should allow $C$ to be an algebraic space. \end{remark}

Next, we show $\mathcal{M}_{g,\Sigma_n}$ is algebraic. First, observe that there is a natural functor $F\colon \mathcal{M}_{g,n} \to \mathcal{M}_{g,\Sigma_n}$ which sends $(C/S,\sigma_1,...,\sigma_n)$ to $(C/S, \bigsqcup_{i=1}^n \sigma_i(S) \subset C)$. Second, there is a natural strict action (see \cite[Def. 2.1 (i)]{MR2125542}) of $S_n$ on $\mathcal{M}_{g,n}$ where $\delta \in S_n$ sends $(f\colon C \to S,\sigma_1,...,\sigma_n) \mapsto (f\colon C \to S, \sigma_{\delta(1)},....,\sigma_{\delta(n)})$. 

\begin{proposition} \label{prop:mgsigmaalg} The functor $F$ induces an isomorphism between the quotient stack $[\mathcal{M}_{g,n}/S_n] \cong \mathcal{M}_{g,\Sigma_n}$. In particular, $\mathcal{M}_{g,\Sigma_n}$ is a separated Deligne--Mumford stack when $g \geq 2$ (for all $n \geq 0$), when $g=1$ (for all $n \geq 1$), or when $g=0$ (for all $n \geq 4$). \end{proposition}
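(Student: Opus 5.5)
The plan is to prove that $F$ is an $S_n$-torsor over its image, then descend; concretely, I will show that $F$ is invariant under the strict $S_n$-action and that the induced morphism $\bar F\colon [\mathcal{M}_{g,n}/S_n]\to \mathcal{M}_{g,\Sigma_n}$ is an isomorphism of stacks. Invariance is immediate: for $\delta\in S_n$, the objects $(C/S,\sigma_1,\dots,\sigma_n)$ and $(C/S,\sigma_{\delta(1)},\dots,\sigma_{\delta(n)})$ have literally the same image $\bigsqcup_i \sigma_i(S)\subset C$. So $F$ factors through the quotient by the universal property of quotients by strict group actions (\cite[Def. 2.1, Thm. 4.1]{MR2125542}).

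To see that $\bar F$ is an isomorphism, I will identify the fiber of $F$ over an object $(C/S,\Sigma\subset C)$ of $\mathcal{M}_{g,\Sigma_n}(S)$. A lift to $\mathcal{M}_{g,n}$ over $T\to S$ is exactly an ordering of $\Sigma_T$, i.e. an isomorphism $\Sigma_T\cong \bigsqcup_{i=1}^n T$ over $T$. The functor of such isomorphisms is the $S_n$-torsor $\underline{\mathrm{Isom}}_S(\bigsqcup_{i=1}^n S,\Sigma)$, which is finite \'etale over $S$ because $\Sigma\to S$ is finite \'etale of degree $n$. Moreover, $S_n$ acts simply transitively on it compatibly with the action on $\mathcal{M}_{g,n}$. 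Hence $\mathcal{M}_{g,n}\to\mathcal{M}_{g,\Sigma_n}$ is an $S_n$-torsor, and an $S_n$-equivariant torsor over a stack identifies the base with the quotient stack. Two further points need checking.
\begin{enumerate}
\item Every $(C,\Sigma)$ is \'etale-locally in the image, because a finite \'etale cover becomes split \'etale-locally. This gives essential surjectivity after the stack condition of Remark \ref{rem:mgsigmastack}.
\item Morphisms match: an isomorphism of $(C,\Sigma)$'s lifts to an isomorphism of ordered marked curves exactly up to the induced permutation of the components of the split $\Sigma$. This gives full faithfulness of $\bar F$, since morphisms in the quotient stack are pairs consisting of a torsor map and a compatible morphism.
\end{enumerate}
Alternatively, I can run the whole argument at once by checking that $\mathcal{M}_{g,n}\times_{\mathcal{M}_{g,\Sigma_n}}\mathcal{M}_{g,n}\cong S_n\times\mathcal{M}_{g,n}$ via $(x,\delta)\mapsto(x,\delta x)$.

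The "in particular" statement follows from the known algebraicity of $\mathcal{M}_{g,n}$. In the stated ranges $2g-2+n>0$ (with $n\ge1$ when $g=1$, $n\ge 3$ when $g=0$; the paper's $n\ge4$ is certainly enough), $\mathcal{M}_{g,n}$ is a separated Deligne--Mumford stack of finite type. Quotienting by a finite constant group preserves these properties: $\mathcal{M}_{g,n}\to[\mathcal{M}_{g,n}/S_n]$ is finite \'etale surjective, so the quotient is algebraic with a DM \'etale atlas. Its diagonal is unramified, because automorphism groups are extensions of subgroups of $S_n$ by finite \'etale automorphism groups. Separatedness also passes to the quotient: the diagonal of $[\mathcal{M}_{g,n}/S_n]$ pulls back to the map $S_n\times\mathcal{M}_{g,n}\to\mathcal{M}_{g,n}\times\mathcal{M}_{g,n}$, which is proper because it is a finite disjoint union of graphs of automorphisms composed with the proper diagonal. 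For $g=1$ one uses algebraic spaces, as in Remark \ref{rem:mgsigmastack}.

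I expect the main obstacle to be purely formal: checking carefully that the torsor description gives an isomorphism of stacks, and not merely a bijection on isomorphism classes. In particular, compatibility with the $2$-categorical data of the strict action must be verified. I would handle this by the fiber-product computation above, which is a direct check.
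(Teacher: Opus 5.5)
Your proposal is correct and follows essentially the paper's route. Both arguments identify the fiber of $F$ with the $S_n$-torsor $\underline{\mathrm{Isom}}_S(\bigsqcup_{i=1}^n S,\Sigma)$ and transfer the separated Deligne--Mumford property from $\mathcal{M}_{g,n}$ along the finite \'etale cover $F$; the paper writes out both functors explicitly via Romagny's torsor description and Galois descent, whereas you package this as ``an equivariant torsor identifies the base with the quotient'' and add a direct diagonal computation for separatedness.
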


 \begin{proof} By \cite[Thm. 4.1]{MR2125542} an $S$-point of the quotient stack $[\mathcal{M}_{g,n}/S_n]$ is the data of an $S_n$-torsor $T \to S$, along with an $S_n$-equivariant map $g: T \to \mathcal{M}_{g,n}$. By the universal property of $\mathcal{M}_{g,n}$, this $g$ is equivalent to a descent datum for an $S$-point of $\mathcal{M}_{g,\Sigma_n}$. Indeed, an $S_n$-equivariant $g$ corresponds to the data of $(f\colon C \to T, \sigma_1,...,\sigma_n)$ and for each $\delta \in S_n$ isomorphisms $\phi_{\delta}\colon C \to C$ fitting into cartesian diagrams:
 \begin{center}
$\begin{CD}
T @>\mathrm{\sigma_i}>> C@>\mathrm{f}>>T\\
@V\mathrm{\delta}VV @V\mathrm{\phi_{\delta}}VV @VV\mathrm{\delta}V\\
T @>\mathrm{\sigma_{\delta(i)}}>> C@>\mathrm{f}>> T
\end{CD}$
 \end{center}

\noindent which are compatible with composition i.e. $\phi_{\delta\delta'}=\phi_{\delta} \circ \phi_{\delta'}$. In other words, the action of $S_n$ on $T$ extends compatibly to actions on $\bigsqcup_{i=1}^r \sigma_i(T)$ and $C$. Thus, by Galois descent (equivalently, taking $S_n$-quotients) we obtain $\Sigma \to C' \to S$, where $C'$ is a genus $g$ curve over $S$ and $\Sigma$ is a closed subscheme of $C'$ which is finite \'etale of degree $n$ over $S$. This defines the functor $[\mathcal{M}_{g,n}/S_n] \to \mathcal{M}_{g,\Sigma_n}$. 
	
	To construct the inverse, note that given $\alpha=(f\colon C \to S, i\colon \Sigma \to C)$ in $\mathcal{M}_{g,\Sigma_n}(S)$, the scheme of $S$-isomorphisms $T=\mathrm{\underline{Isom}}_S(\bigsqcup_{i=1}^r S,\Sigma)$ is an $S_n$-torsor over $S$. Moreover, pulling back $\alpha$ along $t\colon T \to S$ yields a $T$ point of $\mathcal{M}_{g,n}$ with descent datum with respect to the Galois covering $t$. We leave it to the reader to show that these two functors are inverse to each other. 
    
    Lastly, since $F$ is $S_n$-torsor (by \cite[Thm. 4.1]{MR2125542}) it is a finite \'etale cover. Thus, the last statements follow from the corresponding statements about $\mathcal{M}_{g,n}$. This is well known, see, e.g. \cite[Tag 0E9C]{stacks-project}, \cite[Thm. 13.1.2]{Olsson}, and \cite[1.1.2-1.1.5]{MR2262630} for the cases $(g \geq 2, n=0)$, $(g=1, n=1)$, and $(g=0,n=4)$
    (these claims are valid over $\Spec \mathbb{Z}$). For the rest of the cases, note that the map forgetting a point $\mathcal{M}_{g,n} \to \mathcal{M}_{g,n-1}$ realizes $\mathcal{M}_{g,n}$ as the punctured universal curve $\mathcal{C}_{\mathrm{univ}} \setminus \bigsqcup_{i=1}^{n-1} \sigma_i(\mathcal{M}_{g,n-1})$ over $\mathcal{M}_{g,n-1}$.  \end{proof}

\begin{remark} \label{rem:keelmori} Since the stacks $\mathcal{M}_{g,\Sigma_n}$ are separated and Deligne--Mumford, then by \cite[Tag 0DUT]{stacks-project}, they admit coarse moduli spaces. We will denote these by $M_{g,\Sigma_n}$. \end{remark}

We introduce another family of algebraic stacks which will also be useful.

\begin{definition} \label{defhypersurfaces} Let $r \geq 1$ and $n \geq 1$ be positive integers. Then we define the \emph{stack of hypersurfaces of degree $r$ of dimension $n-1$} to be the category fibered in groupoids $\mathcal{C}_{r,n}$ given by
\[\mathcal{C}_{r,n}(S) = \Set{ (f\colon P \to S, i\colon H \hookrightarrow P) | \begin{array}{c}
     \: \text{-$f$ is proper and smooth}\\
     \text{-the geometric fibers of $f$ are isomorphic to $\mathbb{P}^n$}\\
     \text{-$i$ is a closed immersion}\\
     \text{-$H$ is flat and finitely presented over $S$}\\
    \text{-The geometric fibers $H_{\bar{x}}$ are hypersurfaces of degree $r$}\\
  \end{array}}
\]

\noindent Morphisms in $\mathcal{C}_{r,n}(S)$ are $S$-maps of schemes which preserve the closed subschemes $H$. 

\end{definition}

\begin{remark} These stacks are well-known. We enumerate some of their properties below.

 \begin{enumerate}\item The argument given in \cite[Lem. 4.1]{MR4699876}.  shows that there is a natural equivalence of stacks $\mathcal{C}_{r,n} \cong [\mathrm{Hilb}_{r,n}/\mathrm{PGL}_{n+1}]$ (note that although loc.cit only considers smooth objects, the same argument holds in general). As such, the stacks $\mathcal{C}_{r,n}$ are algebraic.

 \item In fact, the moduli stack of smooth hypersurfaces $\mathcal{C}^{\mathrm{sm}}_{r,n}=[\mathrm{Hilb}^{\mathrm{sm}}_{r,n}/\mathrm{PGL}_{n+1}]$ over a field $k$ is a separated Deligne--Mumford stack (see \cite[Thm. 1.6]{MR3022710}). In particular, there exists a coarse moduli space $C^{\mathrm{sm}}_{r,n}$ of smooth degree $r$ hypersurfaces in $\mathbb{P}^n$.

\item  There is a natural identification of $\mathcal{M}_{0,\Sigma_r}$with the open substack of $\mathcal{C}_{r,1}$ which parametrizes the hypersurfaces of $\mathbb{P}^1$ which are smooth of degree $r$.

\end{enumerate}
\end{remark}

Next, we show that the stack $\mathcal{C}_{r,1}$ admits open substacks with good schematic approximations. 

\subsection{GIT quotients} \label{sub:git} Fix $r \geq 3$
and work over a base field $k$. The group scheme $\mathrm{SL}_2$ has a natural linear action on the vector space of degree $r$ binary forms. As such, the $\mathrm{SL}_2$-space $\mathrm{Hilb}_{r,1}=\mathbb{P}(H^0(\mathbb{P}^1,\mathcal{O}(r)))$ admits an $\mathrm{SL}_2$-linearization of $\mathcal{O}_{\mathrm{Hilb}_{r,1}}(1)$ and therefore geometric invariant theory yields the equivariant open immersions $\mathrm{Hilb}_{r,1}^{\mathrm{sm}} \subset \mathrm{Hilb}_{r,1}^{s} \subset \mathrm{Hilb}_{r,1}^{ss} \subset \mathrm{Hilb}_{r,1}$ (equivalently, open immersions $\mathscr{C}_{r,1}^{\mathrm{sm}} \subset \mathscr{C}_{r,1}^s \subset \mathscr{C}_{r,1}^{ss} \subset \mathscr{C}_{r,1}$), parametrizing the smooth, stable and semistable locus, respectively (see, e.g., \cite[Ch. 1 Sec. 4, Prop. 4.2]{GIT}). Also by \cite[pg. 80]{GIT}, the (semi-)stable locus consists of those degree $r$ subschemes of $\mathbb{P}^1$ which only contain points of multiplicity less than (or equal to) $r/2$.
	
	\begin{proposition} \label{prop:GIT} Fix a base field $k$ and let $r \geq 3$, then there is a morphism
	\[\pi\colon \mathscr{C}_{r,1}^{ss} \to C^{ss}_{r,1}=\mathrm{Hilb}_{r,1}^{ss} \sslash \mathrm{SL}_{2}\]
	to a quasi-projective scheme $C^{ss}_{r,1}$ of finite-type. Moreover, there is a nonempty open subscheme $C^{s}_{r,1} \subset C^{ss}_{r,1}$ such that the restricted morphism $\pi^{-1}(C^s_{r,1})=\mathscr{C}_{r,1}^{s} \to C^{s}_{r,1}$ induces a bijection on isomorphism classes of $L$-points whenever $L$ is an algebraically closed field.
\end{proposition}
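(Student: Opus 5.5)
The plan is to build $\pi$ directly from Mumford's GIT and to reduce the statement about the stack to standard facts about good and geometric quotients. Write $\mathscr{C}^{ss}_{r,1} = [\mathrm{Hilb}^{ss}_{r,1}/\mathrm{PGL}_2]$, using the identification $\mathscr{C}_{r,1} \cong [\mathrm{Hilb}_{r,1}/\mathrm{PGL}_2]$ from the remark above.

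\emph{Constructing $\pi$ and $C^s_{r,1}$.}
\begin{enumerate}
\item By \cite[Thm. 1.10]{GIT}, the reductive group $\mathrm{SL}_2$ acts linearly on $\mathrm{Hilb}_{r,1} = \mathbb{P}(H^0(\mathbb{P}^1,\mathcal{O}(r)))$ with a linearized ample bundle. So there is a good categorical quotient $q\colon \mathrm{Hilb}^{ss}_{r,1} \to C^{ss}_{r,1} = \operatorname{Proj}\big(\bigoplus_m H^0(\mathcal{O}(m))^{\mathrm{SL}_2}\big)$, which is projective, hence quasi-projective and of finite type.
\item The same theorem gives an open subscheme $C^s_{r,1} \subset C^{ss}_{r,1}$ with $q^{-1}(C^s_{r,1}) = \mathrm{Hilb}^s_{r,1}$, on which $q$ restricts to a geometric quotient.
\item The central $\mu_2 \subset \mathrm{SL}_2$ acts trivially on $\mathrm{Hilb}_{r,1}$. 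Hence $q$ is $\mathrm{PGL}_2$-invariant: for $\mathrm{PGL}_2 = \mathrm{SL}_2/\mu_2$, invariance can be checked fppf locally, since $\mathrm{SL}_2 \to \mathrm{PGL}_2$ is an fppf epimorphism.
\item An invariant morphism from the atlas factors uniquely through the quotient stack, because the stack is the fppf quotient of the groupoid. This factorization is $\pi$.
\item Taking preimages, $\pi^{-1}(C^s_{r,1}) = [\mathrm{Hilb}^s_{r,1}/\mathrm{PGL}_2] = \mathscr{C}^s_{r,1}$.
\item Nonemptiness: for $r \ge 3$ a reduced form, for example $x^r - y^r$, has all multiplicities $1 < r/2$. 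It is therefore stable by the numerical criterion quoted from \cite[pg. 80]{GIT}.
\end{enumerate}

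\emph{The bijection on $L$-points.} Let $L$ be algebraically closed.
\begin{enumerate}
\item Since $\mathrm{PGL}_2$-torsors over $\Spec L$ are trivial, isomorphism classes of $\mathscr{C}^s_{r,1}(L)$ are exactly the orbits $\mathrm{Hilb}^s_{r,1}(L)/\mathrm{PGL}_2(L)$.
\item These are the same as the orbits under $\mathrm{SL}_2(L)$, because $\mathrm{SL}_2(L) \to \mathrm{PGL}_2(L)$ is surjective over an algebraically closed field.
\item A geometric quotient is surjective and its geometric fibers are single orbits. So $\mathrm{Hilb}^s_{r,1}(L) \to C^s_{r,1}(L)$ is surjective, and two $L$-points have the same image iff they lie in the same orbit.
\item This last step requires the fibers to be single orbits after base change to $L$. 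It holds because GIT quotients by reductive groups commute with flat base change, so $(\mathrm{Hilb}^s_{r,1})_L \to (C^s_{r,1})_L$ is again a geometric quotient. This uses the description via invariant rings and the flatness of $L/k$.
\end{enumerate}

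\emph{Main obstacle.} The step needing most care is applying Mumford's theory over an arbitrary field $k$, possibly of positive characteristic, where $\mathrm{SL}_2$ is reductive but not linearly reductive. I would invoke Seshadri's extension of \cite[Thm. 1.10]{GIT} to reductive groups over arbitrary bases (geometric reductivity, via Haboush). That extension still gives a good quotient that is geometric on the stable locus. For the base-change compatibility, it gives universality only up to a universal homeomorphism. This is harmless for $L$-points with $L$ algebraically closed, since a universal homeomorphism induces a bijection on $L$-points. So the bijection claim survives, and I would phrase the proof to use only this.
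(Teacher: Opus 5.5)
Your proposal is correct and follows essentially the same route as the paper. Both use Mumford's Theorem 1.10 for the good quotient and the geometric quotient on the stable locus, factor the invariant map through $[\mathrm{Hilb}^{ss}_{r,1}/\mathrm{PGL}_2]$, and identify isomorphism classes of $L$-points with orbits. Your remarks on surjectivity of $\mathrm{SL}_2(L)\to\mathrm{PGL}_2(L)$ and on positive characteristic fill in details the paper leaves implicit.

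One small slip is in the nonemptiness step: $x^r-y^r$ is not reduced when $\mathrm{char}(k)$ divides $r$ (e.g.\ $x^3-y^3=(x-y)^3$ in characteristic $3$). Take instead any product of $r$ distinct linear forms over $\bar{k}$; this suffices because nonemptiness of an open subscheme can be checked after base change.
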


\begin{proof} By \cite[Thm 1.10]{GIT}, the semistable locus of the Hilbert scheme admits a universal categorical quotient under the action of $\mathrm{SL}_2$, and it is finite-type and quasi-projective. We denote the morphism by $\phi\colon \mathrm{Hilb}^{ss}_{r,1} \to C^{ss}_{r,1}$. This map $\phi$ factors through the stack quotient $\mathrm{Hilb}^{ss}_{r,1} \to [\mathrm{Hilb}^{ss}_{r,1}/\mathrm{PGL}_2] \to C^{ss}_{r,1}$ since it is invariant under $\mathrm{PGL}_2$. Moreover, by \cite[Thm 1.10 (iii)]{GIT} there is an open subscheme $C^{s}_{r,1} \subset C^{ss}_{r,1}$ such that $\phi^{-1}(C^{s}_{r,1})=\mathrm{Hilb}_{r,1}^s$ and $\phi$ restricted to this equivariant open is a geometric quotient. In particular, when restricted to this locus, the fiber over any geometric point is a single orbit. Thus, we obtain a cartesian square 

\[
  \xymatrix{[\mathrm{Hilb}_{r,1}^s/\mathrm{PGL}_2]=\mathscr{C}_{r,1}^s \ar[r] \ar[d] & [\mathrm{Hilb}_{r,1}^{ss}/\mathrm{PGL}_2]=\mathscr{C}_{r,1}^{ss} \ar[d] \\ C^{s}_{r,1} \ar[r] & C^{ss}_{r,1}. }
\] 
Lastly, when $L$ is an algebraically closed field, the isomorphism classes of $[\mathrm{Hilb}_{r,1}^s/\mathrm{PGL}_2](L)$ are precisely the orbits of the action and therefore, the left vertical map induces a bijection on isomorphism classes of $L$ points. \end{proof}

\subsection{Arithmetic hyperbolicity of moduli} Now we discuss the arithmetic hyperbolicity of these moduli spaces.

\begin{lemma} \label{lem:markedgenus0ah} The stack $\mathcal{M}_{g,n,\mathbb{Q}}$ is arithmetically hyperbolic if $g \geq 2$ (for every $n\geq 0$), if $g=1$ (for every $n\geq 1$), or if $g=0$ (for every $n\geq 4$). \end{lemma}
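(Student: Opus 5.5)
The plan is induction on $n$, using the forgetful morphisms $\mathcal{M}_{g,n}\to\mathcal{M}_{g,n-1}$ from the proof of Proposition \ref{prop:mgsigmaalg}. Each of these realizes $\mathcal{M}_{g,n}$ as the punctured universal curve $\mathcal{C}_{\mathrm{univ}}\setminus\bigsqcup_{i=1}^{n-1}\sigma_i$. Arithmetic hyperbolicity will be checked in the form of Proposition \ref{prop:AHequiv}(4). So fix an integrally closed, $\mathbb{Z}$-finitely generated ring $A=\mathcal{O}_{L,S}$ and spread out all the stacks and forgetful maps over $A$ after enlarging $S$.

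\textbf{Base cases.}
\begin{itemize}
\item For $g\ge 2$, $n=0$: this is Faltings' theorem (Shafarevich conjecture) that $\mathcal{M}_g$ is arithmetically hyperbolic, Example \ref{ex:AHcurves}(2).
\item For $g=1$, $n=1$: this is Shafarevich's theorem for $\mathcal{M}_{1,1}$, Example \ref{ex:AHcurves}(2).
\item For $g=0$, $n=4$: here $\mathcal{M}_{0,4}\cong\mathbb{P}^1\setminus\{0,1,\infty\}$ is a scheme, via the cross-ratio normalizing three points to $0,1,\infty$. Its $\mathcal{O}_{L,S}$-points are finite by Siegel/Mahler (the $S$-unit equation), Example \ref{ex:AHcurves}(1).
\end{itemize}

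\textbf{Induction step.} Given finitely many isomorphism classes in $\pi_0(\mathcal{M}_{g,n-1}(\mathcal{O}_{L,S'}))$ for every finite extension $L'/L$ and every $S'$, pick representatives $(C/\mathcal{O}_{L,S'},\sigma_1,\dots,\sigma_{n-1})$. An $\mathcal{O}_{L,S'}$-point of $\mathcal{M}_{g,n}$ over such a representative is an $\mathcal{O}_{L,S'}$-point of the fiber $C\setminus\bigcup\sigma_i$. Two such points give isomorphic objects exactly when they differ by an automorphism of $(C,\sigma_1,\dots,\sigma_{n-1})$, which is finite, so it suffices to show that each fiber has finitely many integral points. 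The fiber is an affine curve over $L$:
\begin{itemize}
\item For $g\ge2$, the whole curve $C_L$ already has finitely many $L$-points by Faltings.
\item For $g=1$, $C_L\setminus\{\sigma_1,\dots\}$ with $n\ge2$ is a punctured elliptic curve, finite by Siegel.
\item For $g=0$, $n\ge5$, it is $\mathbb{P}^1$ minus at least $4\ge3$ points, finite by Siegel.
\end{itemize}
Since $\pi_0$ of the total space maps to the finite set $\pi_0(\mathcal{M}_{g,n-1}(\mathcal{O}_{L,S'}))$ with finite fibers, finiteness follows.

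\textbf{Main obstacle.} The subtle point is the stacky bookkeeping. An $\mathcal{O}_{L,S'}$-point of $\mathcal{M}_{g,n-1}$ may have nontrivial twisted forms, and the fiber of $\mathcal{M}_{g,n}\to\mathcal{M}_{g,n-1}$ must be taken over the actual point rather than over a geometric point. To handle this, I would work with the representable morphism $\mathcal{M}_{g,n}\to\mathcal{M}_{g,n-1}$ directly: for a fixed $T$-point $x$, the fiber product $\mathcal{M}_{g,n}\times_{\mathcal{M}_{g,n-1},x}T$ is the scheme $C_x\setminus\bigcup\sigma_i$. The image of $\pi_0$ is then controlled because the automorphism group of $x$ is finite, the stacks being separated Deligne--Mumford by Proposition \ref{prop:mgsigmaalg}.

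Alternatively, I would cite \cite[Thm.~4.23]{zbMATH07360996} together with their fibration results, which show that an extension of arithmetically hyperbolic objects along representable morphisms with arithmetically hyperbolic fibers is arithmetically hyperbolic. I would use that citation if the direct argument becomes cumbersome.
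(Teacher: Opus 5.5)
Your proposal is correct and follows the paper's proof. It uses the same base cases: Siegel/Mahler for $\mathcal{M}_{0,4}\cong\mathbb{P}^1\setminus\{0,1,\infty\}$, Shafarevich for $\mathcal{M}_{1,1}$, and Faltings for $\mathcal{M}_g$. It then inducts along the forgetful map that realizes $\mathcal{M}_{g,n}$ as the punctured universal curve over $\mathcal{M}_{g,n-1}$, with the fibers handled by Siegel's theorem (and by Faltings when $g\ge 2$, $n=1$, where the fiber is the whole projective curve rather than an affine one); your bookkeeping about taking fibers over actual integral points rather than geometric points just spells out what the paper leaves implicit.
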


\begin{proof} Recall that $\mathcal{M}_{0,4} \cong \mathbb{P}^1\setminus \{0,1,\infty\}$, and therefore it is arithmetically hyperbolic by Example \ref{ex:AHcurves} (1). When $n=1$ (resp. $n=0$) and $g=1$ (resp. $g \geq 2$) this is a well-known result of Shafarevich (resp. Faltings \cite[Sec. 6, Kor. 1]{MR718935}, see Example \ref{ex:AHcurves}). For the remaining cases, use that the map forgetting the last section $F_n\colon \mathcal{M}_{g,n} \to \mathcal{M}_{g,n-1}$ realizes $\mathcal{M}_{g,n}$ as the punctured universal curve $\mathcal{C}_{\mathrm{univ}} \setminus (\bigsqcup_{i=1}^{n-1} \sigma_{i,\mathrm{univ}}) \to \mathcal{M}_{g,n-1}$. Thus, any $\mathcal{O}_{K,S}$-point of $\mathcal{M}_{g,n-1}$ has finite fibers along $F_n$ by the results of Lang-Mahler-Parry-Siegel (see Example \ref{ex:AHcurves} (1)), and
also Faltings' result (see Example \ref{ex:AHcurves} (2)) for the case $g \geq 2$ and $n = 1$. \end{proof}

\begin{corollary} \label{cor:binaryformsAH} The algebraic stack $\mathcal{M}_{g,\Sigma_n,\mathbb{Q}}$ is arithmetically hyperbolic if $g \geq 2$ (for every $n\geq 0$), if $g=1$ (for every $n\geq 1$), or if $g=0$ (for every $n\geq 4$). \end{corollary}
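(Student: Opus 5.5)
The plan is to deduce the statement from Lemma \ref{lem:markedgenus0ah} together with Proposition \ref{prop:mgsigmaalg}, using the Chevalley--Weil type descent of arithmetic hyperbolicity along finite étale covers. By Proposition \ref{prop:mgsigmaalg}, the functor $F\colon \mathcal{M}_{g,n} \to \mathcal{M}_{g,\Sigma_n}$ identifies $\mathcal{M}_{g,\Sigma_n}$ with $[\mathcal{M}_{g,n}/S_n]$. Hence $F$ is an $S_n$-torsor, in particular finite étale and surjective. It is also representable, being a torsor under a finite constant group. In each of the three ranges of $(g,n)$ in the statement, $\mathcal{M}_{g,n,\mathbb{Q}}$ is arithmetically hyperbolic by Lemma \ref{lem:markedgenus0ah}, and both stacks are separated Deligne--Mumford. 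So it suffices to show that arithmetic hyperbolicity descends along finite étale surjections of such stacks.

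For the descent I would use criterion (4) of Proposition \ref{prop:AHequiv}. Fix an integrally closed, $\mathbb{Z}$-finitely generated $A \subset \bar{\mathbb{Q}}$, so $A=\mathcal{O}_{L,S}$, and a model $\mathfrak{Y}$ of $\mathcal{M}_{g,\Sigma_n,\bar{\mathbb{Q}}}$ over $A$ with finite diagonal. After enlarging $S$ (which only enlarges the set of integral points), I spread $F$ out to a finite étale $S_n$-torsor $\mathfrak{X}\to\mathfrak{Y}$, where $\mathfrak{X}$ is a model of $\mathcal{M}_{g,n}$. Given a point $y\in \mathfrak{Y}(\mathcal{O}_{L,S})$, its pullback $T_y \to \Spec \mathcal{O}_{L,S}$ is an $S_n$-torsor, hence finite étale of degree $n!$ over $\mathcal{O}_{L,S}$. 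By Hermite--Minkowski, there are only finitely many such étale algebras. So there is one number field $L'$ and a finite set $S'$ such that every $T_y$ splits over $\mathcal{O}_{L',S'}$. Then every $y$ lifts to a point of $\mathfrak{X}(\mathcal{O}_{L',S'})$.

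By arithmetic hyperbolicity of $\mathcal{M}_{g,n}$, applied via Proposition \ref{prop:AHequiv}(4) over $\mathcal{O}_{L',S'}$, the set $\pi_0(\mathfrak{X}(\mathcal{O}_{L',S'}))$ is finite. Therefore the images of the points $y$ in $\pi_0(\mathfrak{Y}(\mathcal{O}_{L',S'}))$ form a finite set. To conclude finiteness of $\pi_0(\mathfrak{Y}(\mathcal{O}_{L,S}))$, I still need that $\pi_0(\mathfrak{Y}(\mathcal{O}_{L,S}))\to\pi_0(\mathfrak{Y}(\mathcal{O}_{L',S'}))$ has finite fibers. This holds because the objects $y$ with a given base change are classified by twisted forms, and these are controlled by $H^1$ of the finite étale automorphism group scheme $\underline{\mathrm{Aut}}(y')$ over $\mathcal{O}_{L,S}$. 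That automorphism group scheme is finite because $\mathfrak{Y}$ has finite diagonal. Such $H^1$ sets are finite, again by Hermite--Minkowski.

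Alternatively, the whole descent step is exactly the Chevalley--Weil statement cited in Remark \ref{rem:hilbcurveAH}, namely \cite[Thm. 1.2]{zbMATH07360996}, which already allows stacks and finite étale maps. I would simply invoke it for $F$, which makes the proof essentially a two-line argument. The main subtlety, if one argues by hand, is the finiteness of fibers on $\pi_0$ caused by twisted forms. Citing Chevalley--Weil for stacks handles it cleanly.
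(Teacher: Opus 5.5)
Your proposal is correct and follows the paper's proof. The paper likewise combines the finite \'etale $S_n$-torsor $\mathcal{M}_{g,n}\to\mathcal{M}_{g,\Sigma_n}$ from Proposition \ref{prop:mgsigmaalg} with Lemma \ref{lem:markedgenus0ah}, and then invokes the stacky Chevalley--Weil theorem, cited there as \cite[Thm.~5.1]{zbMATH07360996}. Your by-hand descent is a faithful sketch of what that theorem does: Hermite--Minkowski splits all the $T_y$ over a single $\mathcal{O}_{L',S'}$, and twisted forms are finite via $H^1(\mathrm{Gal}(L'/L),\mathrm{Aut}(y_{\mathcal{O}_{L',S'}}))$ once $L'/L$ is taken Galois, so you need no \'etaleness of $\underline{\mathrm{Aut}}$ over $\mathcal{O}_{L,S}$.
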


\begin{proof} By Proposition \ref{prop:mgsigmaalg}, there is a finite \'etale cover $\mathcal{M}_{g,n} \to \mathcal{M}_{g,\Sigma_n}$. By Lemma \ref{lem:markedgenus0ah}, and the stacky Chevalley--Weil theorem (see \cite[Thm. 5.1]{zbMATH07360996}), the result follows. \end{proof}


\section{Duals of smoothly branched curves}
\label{sec:duals}

We will eventually show that the complements of dual varieties of many non-rational curves have few integral points.
This section contains some preliminary material on these dual varieties and we assume throughout that $k$ is a field of characteristic $0$.
First, given a (reduced) closed subscheme $X \subset \mathbb{P}^n$, one may consider the incidence correspondence over the smooth locus $X_{\mathrm{sm}}$:
\[I^0_X=\Set{(p,[H]) | \begin{array}{c}
     \: \text{$p \in H$ and $T_p(X) \subset T_p(H)$}\\
  \end{array}} \subset X_{\mathrm{sm}} \times_{\mathbb{P}^n} \check{\mathbb{P}}^n_k
\]
and take its closure, which we denote by $I_X$, in $X \times_{\mathbb{P}^n} \check{\mathbb{P}}^n_k$.

\begin{definition} \label{def:dual} (see \cite[Ch. 1, Sec. 3a]{zbMATH05248780}) Let $X \subset \mathbb{P}^n$ be a geometrically integral projective $k$-scheme. Then the \emph{dual variety} of $X$,  denoted $X^{*}$, is the scheme-theoretic image of the composition $I_X \subset X \times \check{\mathbb{P}}^n_k \to \check{\mathbb{P}}^n_k$. \end{definition}

\begin{remark} \label{rem:duals} We enumerate some well-known facts about this definition

\begin{enumerate} 

\item The map $I^0_X \to X_{\mathrm{sm}}$ can be identified with the projective bundle $\mathbb{P}(N_{X_{\mathrm{sm}}/\mathbb{P}^n}^{\vee})$, where $N_{X_{\mathrm{sm}}/\mathbb{P}^n}^{\vee}$ is the conormal bundle of $X_{\mathrm{sm}}$ in $\mathbb{P}^n$ (see \cite[p. 27]{zbMATH05248780}).
\item When $X$ is smooth and is not contained in any hyperplane, a hyperplane $[H] \in \check{\mathbb{P}}^n_k$ belongs to $X^{*}$ if and only if the scheme-theoretic intersection $H \cap X$ is not smooth. (see \cite[p. 14]{zbMATH05248780})

\item There is a natural isomorphism $X \cong (X^*)^*$ (see \cite[Thm. 1.1]{zbMATH05248780}).

\item $X^*$ is a geometrically irreducible variety (being the image of the geometrically irreducible scheme $I_X$).

\item If $X$ is a smooth curve of degree $\geq 2$, then $X^*$ is a hypersurface and $I_X \to X^*$ is a resolution of singularities (see \cite[Cor. 1.2 p. 14 and p. 30]{zbMATH05248780}).
\end{enumerate}

\end{remark}

In fact, one can strengthen Remark \ref{rem:duals} (2) when $X$ has controlled singularities.

\begin{definition} \label{def:regim} We say that a geometrically integral curve $\bar{C}$ over a field of characteristic $0$ is \emph{smoothly branched} if the normalization map $C \to \bar{C}$
is unramified.
\end{definition}

The property of being smoothly branched is invariant (and can be checked)
for arbitrary extensions of the base field.

\begin{Remark} \label{ex:nodal} Smoothly branched curves need not be smooth. They are those geometrically integral curves whose completed local rings have a spectrum with regular irreducible components. As an example, recall that a point $p$ on a curve $C$ over field $k$ has a (split) \emph{$r$-fold nodal singularity} if there is an isomorphism 
\[\widehat{\mathcal{O}}_{C,p} \simeq k[[x,y]]/\Pi_{i=1}^r(x+\lambda_iy)\]
where the $\lambda_i \in k$ are distinct. Geometrically integral curves
with all singularities of this type are examples
of smoothly branched curves.
\end{Remark}

It is well-known that if a smooth projective curve $C \subset \mathbb{P}^n$ is not contained in a projective hypersurface $X \subset \mathbb{P}^n$, then $X \cap C$ is singular at $p$ if and only if $T_p(C) \subset T_p(X)$. We extend this to smoothly branched curves below:

\begin{lemma} \label{lemma:dual} Let $f\colon C \to \bar{C} \subset \mathbb{P}^n$ be the normalization of a projective smoothly branched curve $\bar{C}$ which is not contained in any hyperplane, and which is defined over a field of characteristic zero. Then $[H] \in \check{\mathbb{P}}^n_k$ belongs to $(\bar{C})^*$ if and only if $C \times_{\mathbb{P}^n} H$ is singular. \end{lemma}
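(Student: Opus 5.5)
We may assume $k$ is algebraically closed, since both the dual variety and the singularity of $C\times_{\mathbb{P}^n}H$ are compatible with field extension. The plan is to reinterpret both sides through the pullback map $f$. On one side we use the closure $I_{\bar C}$ of the incidence correspondence over the smooth locus. On the other we use the divisor $f^*H$ on $C$. Here $C\times_{\mathbb{P}^n}H$ is the zero scheme of the section $f^*s_H$ of $f^*\mathcal{O}(1)$. This section is nonzero because $\bar C$ lies in no hyperplane, so $C\times_{\mathbb{P}^n} H$ is an effective Cartier divisor on the smooth curve $C$. It is singular exactly when some point $q\in C$ has $\mathrm{ord}_q(f^*s_H)\ge 2$.

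The first step is to build a "normalized" incidence variety
\[\tilde I=\{(q,[H]) : q\in C,\ \mathrm{ord}_q(f^*s_H)\ge 2\}\subset C\times \check{\mathbb{P}}^n.\]
Since $f$ is unramified, $df_q$ is injective for every $q\in C$, so $f$ has a well-defined tangent line $\ell_q=f(q)+\mathrm{im}(df_q)$ at every point. The condition $\mathrm{ord}_q\ge 2$ says exactly that $H$ contains $f(q)$ and $\ell_q$. This is two independent linear conditions on $[H]$, because $df_q\neq 0$.

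Because $f$ is unramified, $q\mapsto \ell_q$ is a morphism $C\to \mathrm{Gr}(2,n+1)$, namely the projectivization of the first principal parts bundle $P^1(f^*\mathcal{O}(1))$ of rank $2$ on $C$. Hence $\tilde I\to C$ is a $\mathbb{P}^{n-2}$-bundle: the projectivization of the kernel of the surjection $H^0(\mathcal{O}(1))\otimes\mathcal{O}_C\to P^1(f^*\mathcal{O}(1))$. Surjectivity holds at each $q$ precisely because $df_q\ne 0$. In particular $\tilde I$ is irreducible and closed.

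Over the open set $U=f^{-1}(\bar C_{\mathrm{sm}})$, where $f$ is an isomorphism, $\tilde I|_U$ is identified with $I^0_{\bar C}$, using the description in Remark~\ref{rem:duals}(1). Now take the closure of $\tilde I|_U$ in $C\times\check{\mathbb{P}}^n$. It is $\tilde I$ itself, because $\tilde I$ is irreducible and $\tilde I|_U$ is a dense open subset. The map $f\times\mathrm{id}$ is proper, so the image of $\tilde I$ in $\bar C\times\check{\mathbb{P}}^n$ is closed and contains $I^0_{\bar C}$ as a dense subset; hence this image is $I_{\bar C}$. Projecting to $\check{\mathbb{P}}^n$, the set of $[H]$ lying in $(\bar C)^*$ equals the image of $\tilde I$. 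By construction, that image is exactly the set of $H$ for which $f^*H$ has a point of multiplicity $\ge 2$, i.e.\ for which $C\times_{\mathbb{P}^n}H$ is singular.

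The main obstacle is verifying that $\tilde I$ really is a projective bundle over all of $C$, including over the preimages of singular points of $\bar C$. This is exactly where smoothly branched is used. Without it, the order-of-vanishing condition at a cusp is not two independent linear conditions, and the limiting tangent hyperplanes may fail to match. Concretely, we would check it by a local computation. In a local coordinate $t$ at $q$, write $f^*s_H=\sum a_i f_i(t)$. The conditions $\mathrm{ord}_q\ge 2$ are the vanishing of the value and the first derivative. These two linear functionals are independent since $(f_i(q))$ and $(f_i'(q))$ are linearly independent, which is equivalent to $f$ being unramified at $q$.
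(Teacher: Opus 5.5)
Your proposal is correct and follows the paper's proof essentially step for step. The paper also forms the incidence correspondence over the normalization $C$ and shows it is a $\mathbb{P}^{n-2}$-bundle, because unramifiedness makes the composite $\mathrm{Ker}[H^0(\mathbb{P}^n,\mathcal{O}(1))\to k(q)]\to \mathfrak{m}_q/\mathfrak{m}_q^2\to\mathfrak{m}_p/\mathfrak{m}_p^2$ surjective (your principal-parts formulation is the same condition); it then identifies the image of this bundle in $\mathbb{P}^n\times\check{\mathbb{P}}^n$ with $I_{\bar C}$, using irreducibility and agreement over the smooth locus, and your write-up is slightly more explicit on the closing details (properness of the image, and $f^*s_H\neq 0$ since $\bar C$ spans $\mathbb{P}^n$).
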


\begin{proof} Without loss of generality, we may assume $k$ is algebraically closed. Indeed, smoothness is flat-local and the formation of the normalization and dual are compatible with the extension $\bar{k}/k$. 

We show that the closure $I_{\bar{C}}$ of $I^0_{\bar{C}}$ has another description. Consider the incidence $I$ correspondence in $C \times \check{\mathbb{P}}^n$ defined by $\{(p,H)|f^{-1}(H) \textrm{ is singular at } p\}$.
We claim that $\pi \colon I \ra C$ is a $\mathbb{P}^{n-2}$ bundle. Indeed, if $p \mapsto q$ along the normalization of $\bar{C}$, then the fiber of $p$ along $\pi\colon I \to C$ is the projectivization of the kernel of the composition
\[\mathrm{Ker}[H^0(\mathbb{P}^n,\mathcal{O}(1)) \to k(q)] \to \mathfrak{m}_q/\mathfrak{m}_q^2 \to \mathfrak{m}_p/\mathfrak{m}_p^2.\]
Moreover, since $\bar{C}$ is smoothly branched, the second map is surjective.
Put differently, the canonical morphism
$f^* \Omega_{\mathbb{P}^n/k} \rightarrow \Omega_{C/k}$ is surjective
as in \cite[\href{https://stacks.math.columbia.edu/tag/06BB}{Tag 06BB}]{stacks-project},
by the unramifiedness of the normalization map. Thus, the above composition is surjective and the fiber of $\pi$ is isomorphic to $\mathbb{P}^{n-2}$ as desired. Thus, $I$ is irreducible and its (automatically closed) image $f(I)$ in $\mathbb{P}^n \times_k \check{\mathbb{P}}^n$ coincides with $I_{\bar{C}}^0$ over $(\bar{C})_{\mathrm{sm}}$, hence $I_{\bar{C}}=f(I)$. The result now follows. \end{proof}

\begin{remark} \label{rem:branchedcoverdual} Let $\bar{C} \subset \mathbb{P}^2$ be a smoothly branched curve of degree $d \geq 2$ with normalization $C \to \bar{C}$. Then the incidence correspondence $I \subset X=\mathbb{P}(\Omega_{\mathbb{P}^2}|_{C})$ defined in the proof of Lemma \ref{lemma:dual} is isomorphic to $C$ and yields a section of the $\mathbb{P}^1$-bundle $\pi\colon X \to C$. Moreover, the composition 
\[f\colon X \to \mathbb{P}^2 \times \check{\mathbb{P}}^2 \to \check{\mathbb{P}}^2\] 
is a finite flat map of degree $d$ with ramification locus $I$. Thus, the duals of non-linear smoothly branched plane curves can be realized as the branch loci of a finite flat cover of the plane by a smooth ruled surface $X$.

Note that the constructions of Faltings \cite{MR1975455}, Zannier \cite[Sec. 3]{MR2135140}, and Levin \cite[Sec. 13]{MR2552103} also arise as branch loci of ramified coverings of the plane. However, we claim that our branch loci are different. For instance, for any $p \in (\bar{C})_{\mathrm{sm}}$ the map $f$ realizes the fiber $\pi^{-1}(p) \cong \mathbb{P}^1$ as a line inside the dual plane. Thus, it is not possible for $\mathcal{L}=f^*\mathcal{O}(1)$ to generate $3$-jets
(as is required by their construction) i.e. 
\[H^0(X,\mathcal{L}) \to \mathcal{L}_x/\mathfrak{m}^4_x\mathcal{L}_x \]
fails for any $x \in \pi^{-1}(p)$ because $\mathcal{L}|_{\pi^{-1}(p)} \cong \mathcal{O}(1)|_{\mathbb{P}^1}$.

As such, our coverings $X \to \mathbb{P}^2$ are different from those of Faltings, Zannier and Levin. Recall also that their branch loci only have ordinary cusps or nodes. Now, if $\bar{C}^*$ only has ordinary cusps and nodes, then $f\colon X \to \mathbb{P}^2$ constructed above is a \emph{generic cover} as in Chisini's conjecture, in the sense of \cite{zbMATH05496894}. Thus, by Kulikov's work on Chisini's conjecture (see \cite[Thm. 1]{zbMATH05496894}) the dual curves we consider cannot be realized as the branch loci of the generic projections constructed by Faltings, Zannier, and Levin. \end{remark}

\section{Analytic degenerations of marked points
on isotrivial families of curves}
\label{sec:one-param_degen}

In this section we control some degenerations of marked curves
by analytic methods.

\begin{lemma}\label{lemma:invt_Riemannian_metric}
Any connected compact Riemann surface $C$ of genus $g \geq 1$
admits an $\operatorname{Aut}(C)$-invariant Riemannian metric.
\end{lemma}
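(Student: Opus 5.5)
We split according to whether $\operatorname{Aut}(C)$ is compact, and then average a metric over the compact group.

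\emph{Case $g \geq 2$.} By Hurwitz's theorem, $\operatorname{Aut}(C)$ is finite (of order at most $84(g-1)$). Choose any Riemannian metric $h_0$ on $C$, for instance one coming from a partition of unity. Set $h = \sum_{\sigma \in \operatorname{Aut}(C)} \sigma^* h_0$. This is a finite sum of positive definite symmetric tensors, so it is again a Riemannian metric. It is $\operatorname{Aut}(C)$-invariant because pulling back by any element permutes the summands. Alternatively, the uniformization theorem gives the hyperbolic metric of constant curvature $-1$ compatible with the complex structure. Every biholomorphism is an isometry for this metric, since automorphisms of the upper half-plane are isometries of the Poincaré metric.

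\emph{Case $g = 1$.} Write $C = \mathbb{C}/\Lambda$. The automorphism group is $\operatorname{Aut}(C) \cong C \rtimes \operatorname{Aut}(C,0)$. Here the translations form the compact torus $C$, and $\operatorname{Aut}(C,0)$ is a finite group of order $2$, $4$, or $6$, consisting of maps $z \mapsto \zeta z$ with $\zeta$ a root of unity satisfying $\zeta\Lambda = \Lambda$. The key step is to justify this description. A biholomorphism lifts to an affine map $z \mapsto az + b$ of $\mathbb{C}$, because a holomorphic map whose derivative is $\Lambda$-periodic and bounded has constant derivative by Liouville. The condition $a\Lambda = \Lambda$ then forces $|a| = 1$, since $a$ preserves the covolume of $\Lambda$. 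Take the flat metric $|dz|^2$. It descends to $C$ because it is translation-invariant on $\mathbb{C}$, and it is invariant under every $z \mapsto az + b$ with $|a| = 1$. It is therefore $\operatorname{Aut}(C)$-invariant.

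A uniform alternative covering both cases is to note that $\operatorname{Aut}(C)$ is a compact Lie group when $g \geq 1$, and to average $h_0$ against Haar measure. The compactness statement, finiteness for $g \geq 2$ and the torus-by-finite structure for $g = 1$, is the real content of the proof. I would simply cite Hurwitz for the first and give the Liouville argument for the second. The only point needing care is checking $|a| = 1$ in genus one, which the area argument settles. The averaged or flat tensor is clearly smooth and positive definite, so nothing else is delicate.
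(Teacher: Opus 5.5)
Your proof is correct and follows the same plan as the paper: average over the finite group $\operatorname{Aut}(C)$ when $g \geq 2$, and use the flat translation-invariant metric when $g = 1$. The only difference is in genus one. The paper shows invariance under $\operatorname{Aut}(C,e)$ by observing that $\varphi \mapsto \varphi^*T/T$ is a character of a finite group into $\mathbb{R}^{\times}_{>0}$, hence trivial. That step relies on translation-invariant metrics being unique up to scalar, which is true only for metrics compatible with the complex structure, such as your $|dz|^2$. You instead get $|a|=1$ directly from the covolume of $\Lambda$.
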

\begin{proof}
First consider the case of genus $g = 1$.
Pick any point $e \in C$, which equips the pair $(C,e)$
with the (unique) structure of a complex torus (elliptic curve)
with identity point $e$.
Consider any translation-invariant Riemannian metric
$T$ on $(C,e)$. Such $T$ are unique up to positive real scalar.
Recall that $\operatorname{Aut}(C)$ (automorphisms $C$ just as
a compact Riemann surface, not necessarily preserving the
addition law) consists of translations composed
with automorphisms of the elliptic curve $(C,e)$.
Any $\varphi \in \operatorname{Aut}(C,e)$
must preserve $T$ up to positive real scalar, by the uniqueness
from above.
But $\operatorname{Aut}(C,e)$ is a finite group,
so the character $\operatorname{Aut}(C,e) \ra \mathbb{R}^{\times}_{>0}$
given by $\varphi \mapsto (\varphi^* T) / T$ must be trivial.
So $T$ is already $\operatorname{Aut}(C)$-invariant.

The case of $g \geq 2$
follows from finiteness of $\operatorname{Aut}(C)$.
Indeed, if $T$ is any Riemannian metric on $C$,
then the average
$\frac{1}{\operatorname{Aut}(C)} \sum_{h \in \operatorname{Aut}(C)} h^*T$
is an $\operatorname{Aut}(C)$-invariant Riemannian metric on $C$.
\end{proof}

Given a compact Riemann surface $C$ and an understood integer $n \geq 1$,
the \emph{big diagonal} $\Delta^{\mathrm{big}} \subseteq C^n$
consists of the locus of tuples whose coordinate values
are not pairwise distinct.

\begin{corollary}\label{corollary:prox_genus_geq_1}
Let $C$ be a compact Riemann surface of genus $g \geq 1$.
For any integer $n \geq 2$, there exists a continuous proximity function
    \begin{equation}
    \operatorname{prox} \colon C^n \ra \mathbb{R}_{\geq 0}    
    \end{equation}
which is $S_n$-invariant, $\operatorname{Aut}(C)$-invariant,
and whose zero locus is exactly $\Delta^{\mathrm{big}}$.
\end{corollary}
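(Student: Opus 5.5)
The natural construction uses the $\operatorname{Aut}(C)$-invariant Riemannian metric $T$ from Lemma \ref{lemma:invt_Riemannian_metric}. Let $d_T \colon C \times C \ra \mathbb{R}_{\geq 0}$ be the associated geodesic distance. Since $C$ is compact and connected, $d_T$ is a genuine metric inducing the usual topology, so it is continuous and vanishes exactly on the diagonal. Every $h \in \operatorname{Aut}(C)$ is an isometry of $T$, because $h^*T = T$. It therefore preserves lengths of piecewise smooth paths, and hence $d_T(h(x),h(y)) = d_T(x,y)$.

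We then set
\[
\operatorname{prox}(x_1,\ldots,x_n) \;=\; \prod_{1 \leq i < j \leq n} d_T(x_i,x_j).
\]
Alternatively one could take $\min_{i<j} d_T(x_i,x_j)$; either choice works.

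The required properties follow directly.
\begin{enumerate}
\item \emph{Continuity.} This holds because $d_T$ is continuous and finite products of continuous functions are continuous.
\item \emph{$S_n$-invariance.} A permutation of coordinates only permutes the unordered pairs $\{i,j\}$. Since $d_T$ is symmetric, the product is unchanged.
\item \emph{$\operatorname{Aut}(C)$-invariance.} Here $\operatorname{Aut}(C)$ acts diagonally on $C^n$, and each factor is invariant by the isometry property above.
\item \emph{Zero locus.} The product vanishes if and only if some factor $d_T(x_i,x_j)$ vanishes with $i \neq j$. This happens if and only if $x_i = x_j$ for some $i \neq j$, which is exactly the condition defining $\Delta^{\mathrm{big}}$.
\end{enumerate}

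The only point requiring care is that the Riemannian distance is an honest metric compatible with the topology and is preserved by isometries. Both are standard facts about Riemannian manifolds, so I do not expect a real obstacle. The substantive input, the existence of an invariant metric even though $\operatorname{Aut}(C)$ is infinite when $g = 1$, has already been supplied by Lemma \ref{lemma:invt_Riemannian_metric}.
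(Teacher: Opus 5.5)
Your proposal is correct and follows essentially the same approach as the paper. The paper takes the distance function of the $\operatorname{Aut}(C)$-invariant metric from Lemma \ref{lemma:invt_Riemannian_metric} and sets $\operatorname{prox}(z_1,\ldots,z_n) = \min_{i \neq j} d(z_i,z_j)$. Your product $\prod_{i<j} d_T(x_i,x_j)$ is an equally valid variant, with the same continuity, invariance, and zero-locus properties.
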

\begin{proof}
Take any invariant Riemannian metric on $C$ as in Lemma \ref{lemma:invt_Riemannian_metric},
and let $d \colon C \times C \ra \mathbb{R}_{\geq 0}$
be the associated (continuous) distance function,
whose zero locus is exactly the diagonal.
We can then take
    \begin{equation}
    \operatorname{prox}(z_1, \ldots, z_n)
        \coloneqq \min_{i \neq j} d(z_i, z_j).
    \end{equation}\end{proof}

\begin{remark} \label{rem:proxgzero} Both Lemma \ref{lemma:invt_Riemannian_metric}
and Corollary \ref{corollary:prox_genus_geq_1}
fail if $C$ instead has genus $0$.
Indeed, pick any identification $C \cong \mathbb{P}^1(\mathbb{C})$
in this case, and consider the faithful action 
$\operatorname{PGL}_2(\mathbb{C}) \acts \mathbb{P}^1(\mathbb{C})$.
The tori $\mathbb{G}_m \hookrightarrow \operatorname{PGL}_2$
preclude the existence of a function as in
Corollary \ref{corollary:prox_genus_geq_1}.
To see this, consider the family of automorphisms $\varphi_t \colon \mathbb{P}^1(\mathbb{C})$ given by $\varphi_t(z) = tz$
for $t \in \mathbb{C}^{\times}$.
Given any $(z_1, \ldots, z_n) \in C^n \setminus \Delta^{\mathrm{big}}$
with all $z_i \neq \infty$,
we have $(\varphi_t(z_1), \ldots, \varphi_t(z_n))$
converging to $(0, \ldots, 0)$ as $t \rightarrow 0$,
but any $\mathrm{Aut}(C)$-invariant proximity function as in Corollary \ref{corollary:prox_genus_geq_1}
would have to be nonzero at $(0, \ldots, 0)$, which is a contradiction. \end{remark}

\begin{definition}
    \label{def:crossratio}
    
Let $C$ be a compact Riemann surface of genus zero, and
consider the classical holomorphic cross ratio
    \begin{equation}
    \operatorname{CR} \colon C^4 \setminus \Delta^{\mathrm{big}} \rightarrow
    \mathbb{C} \setminus \{ 0, 1 \}
    \end{equation}
(this is the unique $\Aut(C)$-invariant
function with the property that $\opn{CR}(\infty,0,1,z) = z$).
As in \cite[{Definition 2.2.32}]{rchenthesis}
we define the \emph{proximity} function
    \begin{equation}\label{equation:prox_P^1}
    \operatorname{prox} \colon C^n \setminus \Delta^{\mathrm{big}} \ra \mathbb{R}_{>0}
    \quad\quad
    \opn{prox}(z_1, \ldots, z_n) \coloneqq
        \min_{\substack{i,j,k,l \\ \text{pairwise distinct}}}
        |\operatorname{CR}(z_i, z_j, z_k, z_l)|
    \end{equation}
for $n \geq 4$. \end{definition} 

\begin{remark} Note that $\opn{prox}$ is continuous, $S_n$-invariant
and $\Aut(C)$-invariant. \end{remark}
The proximity function defined in Definition \ref{def:crossratio} was introduced in \cite{rchenthesis}
to study degenerations of branch points for cyclic
covers of $\mathbb{P}^1$.

\begin{lemma}\label{lemma:prox_limit_P^1}
Let $C$ be a connected compact Riemann surface
of genus $g = 0$.
Given an integer $n \geq 4$,
suppose $z_1(t), \ldots, z_n(t)$
are continuous functions $[0,1] \ra C$
with the property
    \begin{equation}
    (z_1(t), \ldots, z_n(t)) \in
    \begin{cases}
    C^n \setminus \Delta^{\mathrm{big}} & \text{for $t \in (0,1]$}
    \\
    \Delta^{\mathrm{big}} & \text{for $t = 0$.}
    \end{cases}
    \end{equation}
Assume moreover that the multiset
$\{z_1(t), \ldots, z_n(t) \}$ at $t = 0$
does not contain any element of multiplicity $\geq n - 1$.
Then we have $\lim_{t \rightarrow 0} \opn{prox}(z_1(t), \ldots, z_n(t)) = 0$.
\end{lemma}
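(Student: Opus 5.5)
At $t=0$ the limiting multiset has some coincidence but no point of multiplicity $\geq n-1$. The plan is to find four indices $i,j,k,l$ whose limiting configuration forces the cross ratio to $0$. Then
\[
0\le \opn{prox}(z_1(t),\ldots,z_n(t))\le |\operatorname{CR}(z_i(t),z_j(t),z_k(t),z_l(t))|\to 0,
\]
which is what we want. Fix an identification $C\cong\mathbb{P}^1(\mathbb{C})$ with the normalization $\operatorname{CR}(\infty,0,1,z)=z$. Concretely,
\[
\operatorname{CR}(a,b,c,d)=\frac{(d-b)(c-a)}{(c-b)(d-a)}.
\]
By continuity of this rational expression away from the locus where it is indeterminate, $\operatorname{CR}(a,b,c,d)\to 0$ whenever the limit has $b=d$ while $a,b,c$ are pairwise distinct. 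Indeed, the map $(a,b,c,d)\mapsto \operatorname{CR}$ extends continuously to $\mathbb{P}^1$-valued on the locus where $a,b,c$ are distinct. There it is the image of $d$ under the unique Möbius map sending $(a,b,c)\mapsto(\infty,0,1)$, and it takes the value $0$ exactly when $d=b$.

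The combinatorial step is to produce such indices from the hypothesis. Write $w_m=z_m(0)$. Since $(w_1,\dots,w_n)\in\Delta^{\mathrm{big}}$, some value $p$ occurs with multiplicity $m\ge 2$, and by hypothesis $m\le n-2$. So at least two indices have limits different from $p$.

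In the first case, two such indices have distinct limits $q\ne q'$, both different from $p$. Choose $j,l$ with $w_j=w_l=p$ and $i,k$ with $w_i=q$, $w_k=q'$. Then $w_i,w_j,w_k$ are pairwise distinct and $w_l=w_j$, so the cross ratio tends to $0$.

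In the second case, all indices not mapping to $p$ have the same limit $q$, which occurs with multiplicity $n-m\ge 2$. Pick $i$ with $w_i=q$, $j,l$ with $w_j=w_l=p$, and $k$ with $w_k=q$. This fails, because $w_i=w_k$ is not allowed. Instead, use the symmetry of the cross ratio under permutations: its values are $\lambda,1/\lambda,1-\lambda,\ldots$. A degenerate limit pattern $\{p,p,q,q\}$ can give $0$, $1$ or $\infty$ depending on the order of the arguments, and $\opn{prox}$ takes the minimum over all orderings. Therefore it suffices to show that for four points converging in the pattern $\{p,p,q,q\}$, some ordering of the cross ratio tends to $0$.

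The main obstacle is this second pattern $\{p,p,q,q\}$, so I would treat it carefully. Normalize by a Möbius transformation continuous in $t$, for example by sending $z_i(t),z_j(t)$ (with limits $q,p$) and a third chosen point to fixed positions. Here I expect a genuine difficulty: with two pairs colliding at comparable rates, such as $\{0,\epsilon,1,1+\epsilon\}$, the cross ratio $\frac{(1+\epsilon-\epsilon)(1-0)}{(1-\epsilon)(1+\epsilon)}\to1$. So for that ordering the limit is $1$, and for the ordering $(0,1,\epsilon,1+\epsilon)$ it is $1-\lambda\to 0$. Hence the minimum over orderings still tends to $0$, since among $\lambda,1-\lambda,\ldots$ one tends to $0$ whenever $\lambda\to 1$ or $\lambda\to\infty$.

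The general argument is then as follows. For any four indices with degenerate limit pattern, the cross ratio with distinct arguments at $t>0$ has all accumulation points in $\{0,1,\infty\}$. This holds because the four points have a coincidence in the limit, and a cross ratio of distinct points taking a value away from $\{0,1,\infty\}$ in the limit would force distinct limits after Möbius normalization. Making this rigorous requires a compactness argument: $\overline{M}_{0,4}\cong\mathbb{P}^1$, and its boundary corresponds to collisions. Having established this, pass to subsequences $t_\nu\to0$. For each such subsequence, $\min_\sigma|\operatorname{CR}\circ\sigma|\to0$, since the set $\{\lambda,1-\lambda,1/\lambda,\ldots\}$ contains a value near $0$ whenever $\lambda$ is near $0$, $1$ or $\infty$. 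This gives the limit $0$ in all cases.
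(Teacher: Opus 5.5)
You reduce correctly: Case~1 is rigorous, and in Case~2 you correctly reduce to four indices with limit pattern $\{p,p,q,q\}$. The gap is the step that finishes Case~2.

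\textbf{The gap.} You assert that for \emph{any} four indices with a coincidence in the limit, every accumulation point of the cross ratio lies in $\{0,1,\infty\}$, because ``the boundary of $\overline{M}_{0,4}$ corresponds to collisions.'' This is false. For $(z_1,z_2,z_3,z_4)=(0,t,2t,1)$ your own formula gives $\operatorname{CR}=2(1-t)\to 2$. All twenty-four orderings then have limits in $\{2,\tfrac12,-1\}$, so $\operatorname{prox}\to\tfrac12$.

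In stable-curve terms, three points colliding at comparable rates bubble off a component with four special points. The stable limit is therefore an interior point of $M_{0,4}$; only a $2+2$ splitting reaches the boundary. Your ``M\"obius normalization'' heuristic fails because the normalizing transformation depends on $t$ and degenerates as $t\to0$. This triple collision is exactly the configuration the multiplicity hypothesis excludes (here $n=4$, multiplicity $3=n-1$). Your general argument never uses that hypothesis, which shows it proves too much. So the compactness and subsequence paragraph does not establish the $\{p,p,q,q\}$ case as written.

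\textbf{The fix.} You dismissed the right choice too early. Nothing forbids $w_i=w_k$: only the \emph{indices} in the definition of $\operatorname{prox}$ must be distinct, and for $t>0$ the points are distinct. The choice fails only for your continuity argument, which needs the limits of the first three arguments to be distinct.

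Choose a coordinate on $C\cong\mathbb{P}^1$ in which no limit $w_m$ is $\infty$; the cross ratio is $\operatorname{Aut}(C)$-invariant, so this is harmless. Take $j,l$ with $w_j=w_l=p$, and any two further indices $i,k$ with $w_i,w_k\neq p$; these exist because $m\le n-2$. Then
\[
\operatorname{CR}(z_i,z_j,z_k,z_l)=\frac{(z_l-z_j)(z_k-z_i)}{(z_k-z_j)(z_l-z_i)}\longrightarrow\frac{0\cdot(w_k-w_i)}{(w_k-p)(p-w_i)}=0,
\]
whether or not $w_i=w_k$. Your own example ordering $(0,1,\epsilon,1+\epsilon)$ is exactly this computation. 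This single estimate handles both of your cases at once, with no need for $\overline{M}_{0,4}$ or subsequences. The hypothesis is used precisely to guarantee the two indices $i,k$ whose limits differ from $p$.
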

\begin{proof}
This is \cite[{Lemma 2.2.34}]{rchenthesis}
(with a typo in loc. cit., where $t \rightarrow 0$ should read
$t \rightarrow 1$).
The condition on the multiset $\{z_1(t), \ldots, z_n(t) \}$
is sharp.
\end{proof}

For the purpose of Section \ref{sec:one-param_degen},
we consider the nonstandard notation
$\pi_0(\mathscr{M}_{g,\Sigma_d}(\mathbb{C})) |_{C}$
which denotes those elements of $\pi_0(\mathscr{M}_{g,\Sigma_d}(\mathbb{C}))$
whose underlying genus $g$ curve is isomorphic to $C$.
For fixed $C$ of any genus,
any $\opn{prox}$ as in Corollary \ref{corollary:prox_genus_geq_1}
or \eqref{equation:prox_P^1}
descends to a map of sets
    \begin{equation}\label{equation:prox_on_moduli}
    \opn{prox} \colon \pi_0(\mathscr{M}_{g,\Sigma_d}(\mathbb{C})) |_{C} \rightarrow \mathbb{R}_{>0}
    \end{equation}
for $d \geq 2$ if $g \geq 1$, and $d \geq 4$ if $g = 0$.

\begin{lemma}\label{lemma:topological_path_lifting}
Let $f \colon X \rightarrow Y$
be a closed map of Hausdorff topological spaces.
If $f$ has finite fibers, then it has the following path lifting property:
every commutative solid diagram of topological spaces
    \begin{equation}
    \begin{tikzcd}
    {(0,1]} \arrow{r}{\tilde{\gamma}} \arrow{d} & X \arrow{d}{f}
    \\
    {[0,1]} \arrow[dotted]{ur}{s} \arrow{r}{\gamma} & Y
    \end{tikzcd}
    \end{equation}
admits a unique dotted arrow making the diagram commute.
\end{lemma}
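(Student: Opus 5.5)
We need a lift $s\colon[0,1]\to X$ with $s|_{(0,1]}=\tilde\gamma$ and $f\circ s=\gamma$. Since $s$ is prescribed on $(0,1]$, everything reduces to choosing the single point $s(0)\in f^{-1}(\gamma(0))$ so that $s$ is continuous at $0$. Uniqueness is then immediate: $X$ is Hausdorff and $(0,1]$ is dense in $[0,1]$, so two continuous extensions of $\tilde\gamma$ agree at $0$. It remains to prove existence, that is, to show $\lim_{t\to 0}\tilde\gamma(t)$ exists in $X$.

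Write $y_0=\gamma(0)$ and $F=f^{-1}(y_0)=\{x_1,\dots,x_m\}$, a finite set. First, every open neighborhood $U\supseteq F$ yields a neighborhood $W$ of $y_0$ with $f^{-1}(W)\subseteq U$: take $W=Y\setminus f(X\setminus U)$, which is open because $f$ is closed, and contains $y_0$ because $F\subseteq U$. By continuity of $\gamma$ at $0$ there is $\epsilon>0$ with $\gamma([0,\epsilon))\subseteq W$. Then $\tilde\gamma((0,\epsilon))\subseteq f^{-1}(W)\subseteq U$, so $\tilde\gamma(t)$ eventually lies in every neighborhood of $F$. (If $F=\emptyset$, taking $U=\emptyset$ gives a contradiction, so $F\neq\emptyset$.)

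Next we pick out one point of $F$. Since $X$ is Hausdorff and $F$ is finite, choose pairwise disjoint open sets $U_i\ni x_i$. Apply the previous step to $U=\bigsqcup U_i$. The connected set $\tilde\gamma((0,\epsilon))$ lies in this disjoint union of open sets, so it lies in a single $U_{i_0}$. Now let $V\ni x_{i_0}$ be any open neighborhood and set $V'=V\cap U_{i_0}$. Applying the same argument to $U'=V'\sqcup\bigsqcup_{i\neq i_0}U_i$ gives some $\epsilon'$ with $\tilde\gamma((0,\epsilon'))$ contained in one piece of $U'$. For $t<\min(\epsilon,\epsilon')$ the point $\tilde\gamma(t)$ lies in $U_{i_0}$, which is disjoint from the other $U_i$, so that piece must be $V'$. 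Hence $\tilde\gamma(t)\to x_{i_0}$, and setting $s(0)=x_{i_0}$ gives a continuous lift. Finally, $f(s(0))=y_0=\gamma(0)$, so $f\circ s=\gamma$.

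The main point requiring care is the connectedness argument, which forces the path to settle near a single fiber point rather than oscillate between several. This is exactly where finiteness of the fibers and the Hausdorff property of $X$ are used, to separate the fiber points by disjoint open sets.
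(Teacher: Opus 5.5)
Your proof is correct and takes the same approach as the paper's: separate the finitely many fiber points by disjoint open sets, use closedness of $f$ to trap $\tilde\gamma(t)$ in their union for small $t$, and use connectedness of $(0,\epsilon)$ to single out one fiber point. Your version is slightly more complete in two places. You shrink the neighborhood of $x_{i_0}$ to verify explicitly that $\tilde\gamma(t)\to x_{i_0}$, a step the paper leaves implicit. You also use the open set $Y\setminus f(X\setminus U)$ where the paper argues with sequences.
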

\begin{proof}

The uniqueness of the lift, if it exists, follows because $X$ is Hausdorff. As such, we prove existence. Consider the finitely many pre-images $x_1, \ldots, x_d \in X$
of $\gamma(0)$. Select disjoint neighborhoods
$U_1, \ldots, U_d$ of $x_1, \ldots, x_d$ and denote by $V_i$ the pre-images $ \tilde{\gamma}^{-1}(U_i)$. Observe that $\bigsqcup_{i=1}^d V_i$ contains $(0,\epsilon)$ for some $\epsilon>0$. If not, then since $f$ is closed there is a sequence $(t_j)_{j \geq 1}$ in $(0,1]$ approaching $0$ for which $(\gamma(t_j))_{j \geq 1}$ does not approach $\gamma(0)$, a contradiction. This implies the finite disjoint union $\bigsqcup_{i=1}^d V_i$ covers the interval $(0,\epsilon)$. Since the interval is connected, there is exactly one $V_i$ containing it. Setting $s(0)=x_i$ gives the desired lift.\end{proof}

\begin{remark}
The previous lemma fails if $f$ is only assumed to be proper,
e.g. the projection $S^1 \times [0,1] \rightarrow [0,1]$ fails the given path lifting property
(consider the section over $(0,1]$ given by $t \mapsto e^{2 \pi i / t}$).
\end{remark}

\begin{lemma}\label{lemma:analytic_path_lifting}
Let $f \colon I \ra Y$ be a finite flat degree $d \geq 1$
morphism of complex analytic spaces.
Assume also that $f$ is generically \'etale.
Let $\gamma \colon [0,1] \rightarrow Y$ be any continuous path
whose restriction $\gamma|_{(0,1]}$ factors through the \'etale locus of $f$ in $Y$.

Then there exist continuous lifts
$s_1, \ldots, s_d \colon [0,1] \rightarrow I$
of $\gamma$, unique up to permutation, characterized
by the equality
    \begin{equation}\label{equation:analytic_path_lifting}
    \{s_1(t), \ldots, s_d(t)\} = I_{\gamma(t)}(\mathbb{C})
    \end{equation}
of multi-sets for all $t \in [0,1]$.
\end{lemma}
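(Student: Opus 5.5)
The plan is to lift over the étale part of $\gamma$ first, and then use the topological path-lifting of Lemma \ref{lemma:topological_path_lifting} to extend continuously to $t=0$ and match multiplicities.

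\emph{Lifting over $(0,1]$.} Over the étale locus $Y^{\mathrm{et}}\subseteq Y$, the restriction $f^{-1}(Y^{\mathrm{et}})\to Y^{\mathrm{et}}$ is a finite étale map of degree $d$, hence a $d$-sheeted covering space in the classical topology. Since $(0,1]$ is simply connected (contractible), the pullback of this covering along $\gamma|_{(0,1]}$ is trivial. So we get $d$ continuous lifts $s_1,\ldots,s_d\colon(0,1]\to I$ which are pointwise distinct and exhaust the fibers. This gives \eqref{equation:analytic_path_lifting} for $t\in(0,1]$, since the fiber there is reduced of length $d$.

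\emph{Extending to $t=0$.} A finite morphism of complex analytic spaces is closed, proper and has finite fibers, and analytic spaces are Hausdorff. Applying Lemma \ref{lemma:topological_path_lifting} to each square with top arrow $s_i|_{(0,1]}$ gives a unique continuous extension of each $s_i$ to $[0,1]$, with $s_i(0)\in I_{\gamma(0)}$.

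\emph{Matching multiplicities at $t=0$.} This is the step I expect to be the real content. Let $x_1,\ldots,x_m$ be the points of $I_{\gamma(0)}$, with multiplicities $e_j=\dim_{\mathbb C}\mathcal{O}_{I_{\gamma(0)},x_j}$, so that $\sum_j e_j=d$ by flatness. I will use the local structure of finite flat maps. Choose pairwise disjoint neighborhoods $U_j\ni x_j$ and a neighborhood $W\ni\gamma(0)$ with $f^{-1}(W)=\bigsqcup_j (U_j\cap f^{-1}(W))$; this is possible by properness. Then $f_*\mathcal{O}_I|_W$ splits as $\bigoplus_j f_*\mathcal{O}_{U_j}$. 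Each summand is a direct summand of a locally free sheaf, hence locally free, and its rank is $e_j$, read off from the fiber at $\gamma(0)$. Consequently, over an étale point $y\in W$, exactly $e_j$ points of $f^{-1}(y)$ lie in $U_j$. For small $t>0$ we have $\gamma(t)\in W$. By the extension step, $s_i(t)\in U_j$ for $t$ small if and only if $s_i(0)=x_j$. Hence the number of $i$ with $s_i(0)=x_j$ equals $e_j$, which gives the multiset equality at $t=0$.

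\emph{Uniqueness.} Uniqueness up to permutation follows from the construction. Any lifts satisfying \eqref{equation:analytic_path_lifting} are pointwise distinct on $(0,1]$. By connectedness of $(0,1]$ and the covering-space structure, they agree with $s_1,\ldots,s_d$ up to a fixed permutation there. Continuity then forces agreement at $0$, since $I$ is Hausdorff.
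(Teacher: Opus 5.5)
Your proposal is correct and follows essentially the same route as the paper: covering-space lifting over $(0,1]$, extension to $t=0$ via Lemma \ref{lemma:topological_path_lifting}, and a local splitting $f^{-1}(W)=\bigsqcup_j U_j$ near $\gamma(0)$ to match multiplicities. The only difference is cosmetic. You get that the fibers of $U_j\to W$ have length $e_j$ from $f_*\mathcal{O}_{U_j}$ being a direct summand of a locally free sheaf, which yields the count $e_j$ directly. The paper instead appeals to cohomology and base change to get $\#\{i : s_i(0)=z_j\}\le d_{z_j}$ and then concludes by summing over $j$.
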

\begin{proof}
By \emph{generically \'etale}
in the lemma statement,
we mean that there exists an open subset $U \subseteq Y$
with nowhere dense closed analytic complement,
such that $f$ is \'etale over $U$.
By the \emph{\'etale locus of $f$ in $Y$},
we mean the largest open set $U$ for which this is true.
In the lemma statement, the notation $I_{\gamma(t)}(\mathbb{C})$
is abuse of notation for the multi-set (of size $d$) associated
to the discrete complex analytic space $I_{\gamma(t)}$
(fiber of $I \rightarrow Y$ over $\gamma(t)$),
where we count each $\mathbb{C}$-point of $I_{\gamma(t)}$
with multiplicity given by the length of its local ring.

If $U \subseteq Y$ denotes the \'etale locus of $f$,
we know that $f$ is finite \'etale over $U$,
and so it is a finite degree $d$ covering map
of underlying topological spaces
(e.g. \cite[{Theorem, \S 2.3.2}]{GR84}
and \cite[{Proposition 1.9}]{SHC6061n13}).
By covering space theory, we obtain
unique lifts $s_1, \ldots, s_d$ of $\gamma$ over $(0,1]$,
characterized by the formula \eqref{equation:analytic_path_lifting}
(the $s_i(t)$ are distinct for any fixed $t \in (0,1]$).

By Lemma \ref{lemma:topological_path_lifting},
each $s_i$ extends uniquely to a lift of $\gamma$
on all of $[0,1]$
(this question is local on $Y$ near $\gamma(0)$,
so we may assume that $Y$ is Hausdorff,
which forces $I$ to be Hausdorff because $f$ is separated).
The only remaining task is to show the equality
of multi-sets in \eqref{equation:analytic_path_lifting}
at the point $t = 0$.
By \cite[{Theorem, \S 2.3.2}]{GR84}, we may find a
Hausdorff neighborhood $W$ of $\gamma(0)$ with the property
that $f^{-1}(W) = U_1 \sqcup \cdots \sqcup U_r$
where each $U_j$ is an open neighborhood of
$z_j \in I$, where $\{z_1, \ldots, z_r\}$
is the set of (distinct) points in the fiber $I_{\gamma(0)}$,
counted without multiplicity.
Each composite $U_j \ra W$ is finite flat,
hence open by \cite[{Theorem 2.12, Chapter V}]{BS76}.

For each $z_j$, let $d_{z_j} \in \mathbb{Z}_{> 0}$ be its multiplicity,
i.e. the length of the associated local ring in the fiber $I_{\gamma(0)}$.
By cohomology and base change as in
\cite[{Corollary 3.9, Chapter III}]{BS76},
we conclude that all $y \in Y$ in a sufficiently small
neighborhood of $\gamma(0)$ have fiber $U_j \times_{Y,y} \Spec \mathbb{C}$
of total length $d_{z_j}$ (i.e. sum the lengths of local rings).
We thus find $\# \{i : s_i(0) = z_j\} \leq d_{z_j}$
for all $j$, which implies the claim.\end{proof}
\begin{remark}\label{remark:degenerating_path_existence} Consider $f \colon I \rightarrow Y$ as in the statement
of Lemma \ref{lemma:analytic_path_lifting}.
We call any path $\gamma$ as in the lemma
statement a \emph{degenerating path} for $\gamma(0) \in Y$.
We claim that every $y \in Y$ admits a degenerating path.
If $y$ lies in the \'etale locus on $Y$, this is trivial.
Otherwise, $y$ lies in the closed nowhere dense analytic subset
$Z \subseteq Y$ over which $I \to Y$ is not \'etale,
and we would like a continuous path
which intersects $Z$ only at $y$.
To see that such a degenerating path
$\gamma$ always exists (for any closed nowhere dense analytic $Z \subseteq Y$),
one can work locally on $Y$,
take a resolution of singularities (Hironaka) to reduce to the case
where $Y$ is a complex ball in some $\mathbb{C}^n$,
and then note that $y$ lying in
any proper closed analytic subspace $Z \subseteq \mathbb{C}^n$
admits such a degenerating path, upon connecting
$y$ with any $y' \in \mathbb{C}^n \setminus Z$ by a complex line $L$
and then noting that any proper closed analytic subspace of $L$ is discrete. \end{remark}
\begin{remark} If $f \colon I \ra Y$ is a
finite flat degree $d$ morphism
of schemes which are locally of finite type over $\mathbb{C}$,
with e.g. $Y$ moreover irreducible and $f$ generically \'etale
(meaning \'etale over a dense open subset of $Y$),
then the analytification $f^{\mathrm{an}} \colon I^{\mathrm{an}} \rightarrow Y^{\mathrm{an}}$ satisfies the conditions of
Lemma \ref{lemma:analytic_path_lifting}.
Preservation of the relevant properties of $f$
under analytification is explained in \cite{SGA1ExpXII}.
For any $y \in Y(\mathbb{C})$, the scheme-theoretic fiber $I_y$
and the complex-analytic-space-theoretic fiber $I^{\mathrm{an}}_y$
coincide (as locally ringed spaces),
because complex analytification preserves fiber products,
and any finite scheme over $\Spec \mathbb{C}$ is its own analytification.

We are mostly interested in this algebro-geometric case
for the present paper,
but we find the complex analytic formulation in
Lemma \ref{lemma:analytic_path_lifting} convenient for
the proof, where we would like to lift paths to different
analytic branches near ramification points
of the source. \end{remark}

For the next lemma, we consider the following situation.

\begin{setup} 
Consider a commutative diagram of complex analytic spaces
    \begin{equation}
    \begin{tikzcd}
    I \arrow{dr}[swap]{f} \arrow{r} & X \arrow{d}{\pi}
    \\
    & Y
    \end{tikzcd}
    \end{equation}
where $Y$ is reduced,
and where $\pi$ is a proper smooth morphism which
is moreover isotrivial, with fibers all isomorphic to a
fixed connected compact Riemann surface $C$ of genus $g$. Fix an integer $d \geq 1$, and suppose $I \ra X$ is a closed immersion
such that $f \colon I \rightarrow Y$ is finite flat of degree $d$ and generically \'etale with \'etale locus $U \subset Y$. We refer to $Y \setminus U$ as the branch locus of $f$.
\end{setup}

\begin{remark} \label{rem:classsets} In the setup above, for each $y \in U$, the data $(X_y, I_y)$
defines an object of $\pi_0(\mathscr{M}_{g,\Sigma_d}(\mathbb{C}))$.
We thus obtain a \emph{classifying map} of sets
$c\colon U(\mathbb{C}) \ra \pi_0(\mathscr{M}_{g,\Sigma_d}(\mathbb{C}))$.
\end{remark}

\begin{lemma}\label{lemma:analytic_degen_classifying_map}
Take the setup above.
    \begin{enumerate}
        \item Assume $g \geq 1$ and $d \geq 2$. If the branch
            locus of $I \rightarrow Y$ is nonempty,
            then the classifying map $c\colon U(\mathbb{C}) \ra \pi_0(\mathscr{M}_{g,\Sigma_d}(\mathbb{C}))$
            is non-constant.
        \item Assume $g = 0$ and $d \geq 4$. If the branch
            locus of $I \rightarrow Y$ contains a point $y \in Y$
            with the property that the fiber $I_y$ contains
            no non-reduced point of multiplicity $\geq d - 1$,
            then the classifying map $c\colon U(\mathbb{C}) \ra \pi_0(\mathscr{M}_{g,\Sigma_d}(\mathbb{C}))$
            is non-constant. 
    \end{enumerate}
\end{lemma}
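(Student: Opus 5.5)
We argue by contradiction in both cases: assume the classifying map $c$ is constant on $U(\mathbb{C})$, with value the class of some $(C, \Sigma_0)$ where $\Sigma_0 \subset C$ is reduced of degree $d$. Then the proximity function of \eqref{equation:prox_on_moduli} is constant on $U(\mathbb{C})$. This uses Corollary \ref{corollary:prox_genus_geq_1} when $g \geq 1$, and Definition \ref{def:crossratio} when $g = 0$; in both cases $\operatorname{prox}$ descends to isomorphism classes because it is $S_d$- and $\operatorname{Aut}(C)$-invariant. Write $\epsilon := \operatorname{prox}(c(u)) > 0$ for this common value.

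Now pick a branch point $y \in Y \setminus U$; in case (2), take it to satisfy the multiplicity hypothesis. By Remark \ref{remark:degenerating_path_existence} there is a degenerating path $\gamma\colon [0,1] \to Y$ with $\gamma(0) = y$ and $\gamma((0,1]) \subset U$. Lemma \ref{lemma:analytic_path_lifting} then gives continuous lifts $s_1, \ldots, s_d \colon [0,1] \to I \subset X$. For $t > 0$ these are the $d$ distinct points of $I_{\gamma(t)}$, and at $t = 0$ they give the multiset $I_y(\mathbb{C})$. Since $I_y$ is non-reduced (as $y$ is in the branch locus and $f$ is finite flat), two of the $s_i(0)$ coincide.

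To compare fibers of $X$ we trivialize. Because $\pi$ is proper, smooth and isotrivial with fiber $C$, after shrinking to a small contractible neighborhood $W$ of $\gamma([0,\delta])$ we want an analytic trivialization $X|_W \cong C \times W$. This should follow from the Isom-scheme $\underline{\mathrm{Isom}}(C\times Y, X)$ being a torsor under $\operatorname{Aut}(C)$. For $g \geq 2$ that group is finite, so the torsor is a finite étale cover and is locally trivial. For $g \leq 1$ it is a smooth group, and local sections exist by smoothness together with isotriviality. Once this trivialization is in place, composing the $s_i$ with the projection to $C$ gives continuous paths $z_i(t)$ in $C$, with $(z_1(t), \ldots, z_d(t)) \notin \Delta^{\mathrm{big}}$ for $t > 0$ and $\in \Delta^{\mathrm{big}}$ at $t = 0$. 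Moreover, for $t > 0$ the tuple represents $c(\gamma(t))$, because the trivialization identifies $(X_{\gamma(t)}, I_{\gamma(t)})$ with $(C, \{z_i(t)\})$, and hence $\operatorname{prox}(z(t)) = \epsilon$.

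The contradiction comes from the limit. For $g \geq 1$, $\operatorname{prox}$ is continuous on all of $C^d$ and vanishes exactly on $\Delta^{\mathrm{big}}$, so $\operatorname{prox}(z(t)) \to \operatorname{prox}(z(0)) = 0$. For $g = 0$, the multiset $\{z_i(0)\}$ has no point of multiplicity $\geq d-1$ by hypothesis, so Lemma \ref{lemma:prox_limit_P^1} gives $\operatorname{prox}(z(t)) \to 0$. Either way this contradicts $\operatorname{prox}(z(t)) = \epsilon > 0$. I expect the main obstacle to be the local analytic trivialization of the isotrivial family near $y$, especially when $g \leq 1$, where $\operatorname{Aut}(C)$ is positive-dimensional. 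A fallback is to avoid trivializing and instead work with a fiberwise-varying invariant metric: transport the invariant metric of Lemma \ref{lemma:invt_Riemannian_metric} along local isomorphisms, which is well defined precisely because it is $\operatorname{Aut}(C)$-invariant (and for $g = 0$ use the cross ratio, which is intrinsic). Continuity of the resulting function on $X \times_Y X$ would still need the local triviality.
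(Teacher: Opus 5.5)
Your proof follows the paper's argument step for step. You argue by contradiction along a degenerating path at a branch point $y$, take lifts from Lemma \ref{lemma:analytic_path_lifting}, trivialize locally as $X \cong C \times W$, and use the $\operatorname{Aut}(C)$-invariant proximity function, which would be constant on $(0,1]$ but tends to $0$ by Corollary \ref{corollary:prox_genus_geq_1} or Lemma \ref{lemma:prox_limit_P^1}.

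The only step your proposal does not actually establish is the one you flag: local triviality of $\pi$ near $y$. The paper obtains it from the theorem of Fischer--Grauert, as generalized by Poljakov to reduced bases.

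Your torsor sketch does not prove this as written, because it never uses that $Y$ is reduced. That hypothesis is essential. For any $g \geq 1$, take a nontrivial first-order deformation $X$ of $C$ over $Y=\operatorname{Spec}\mathbb{C}[\epsilon]/(\epsilon^2)$. Every fiber is isomorphic to $C$, yet $\underline{\mathrm{Isom}}_Y(C\times Y, X)$ has no section over $Y$. So neither finiteness nor smoothness of $\operatorname{Aut}(C)$ alone makes this Isom space a torsor.

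For $g\geq 2$ you could repair your route using reducedness: a finite morphism with constant fiber length over a reduced base is flat. The clean fix, however, is simply to cite Fischer--Grauert--Poljakov, as the paper does.
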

\begin{proof} If $g \geq 1$, let $y \in Y$ be any point in the branch locus.
If $g = 0$, let $y \in Y$ be a point in the branch locus
satisfying the hypotheses of the lemma statement.

The isotrivial fibration $\pi \colon X \rightarrow Y$
is in fact a trivial product family, locally on $Y$,
by a theorem of Fischer--Grauert \cite{FG65} as generalized by
Poljakov \cite{Poljakov69},
so we may assume $X = Y \times C$ without loss of generality.
See also \cite[{\S 3}]{SHC6061n13} for 
equivalent characterizations of smooth morphisms of complex analytic spaces,
or ``morphismes simples'' in the language of loc. cit..

Take any degenerating path $\gamma \colon [0,1] \rightarrow Y$ for $y$,
in the sense of Remark \ref{remark:degenerating_path_existence}.
Take liftings $s_1, \ldots, s_d$ of $\gamma$
as in Lemma \ref{lemma:analytic_path_lifting}.
Take any continuous proximity function
as in Corollary \ref{corollary:prox_genus_geq_1} (genus $g \geq 1$)
or \eqref{equation:prox_P^1} (genus $g=0$).
If the classifying map $c\colon U(\mathbb{C}) \ra \pi_0(\mathscr{M}_{g,\Sigma_d}(\mathbb{C}))$ were constant after pullback along $\gamma|_{(0,1]}$,
then \eqref{equation:prox_on_moduli}
implies that the continuous function
    \begin{equation}\label{equation:analytic_degen_classifying_map:path_prox}
    \begin{tikzcd}[row sep = tiny]
    (0,1] \arrow{r} & \mathbb{R}_{>0}
    \\
    t \arrow[mapsto]{r} & \opn{prox}(s_1(t), \ldots, s_d(t))
    \end{tikzcd}
    \end{equation}
must be constant.
But in all cases, we
have $\lim_{t \rightarrow 0} \opn{prox}(s_1(t), \ldots, s_d(t)) = 0$
by Corollary \ref{corollary:prox_genus_geq_1} (genus $\geq 1$)
and Lemma \ref{lemma:prox_limit_P^1} (genus $g = 0$),
so \eqref{equation:analytic_degen_classifying_map:path_prox}
cannot be constant.
\end{proof}

\begin{remark}\label{remark:analytic_degen_classifying_map}
Consider the setup of Lemma \ref{lemma:analytic_degen_classifying_map}
but with $I, X, Y$ instead schemes locally of finite type over $\mathbb{C}$,
with $Y$ irreducible, with the adjectives ``proper'', ``smooth'', ``isotrivial'', ``finite flat of degree $d$'',
and ``generically \'etale'' given their usual scheme-theoretic meanings
(with ``generically \'etale'' taken to mean ``\'etale over a dense open
of the target'').
Then the statement Lemma \ref{lemma:analytic_degen_classifying_map}
holds verbatim. This is immediately implied by the version
for complex analytic spaces (we again refer the reader to \cite{SGA1ExpXII}
for comparison of properties of morphisms of schemes locally of finite
type over $\mathbb{C}$ with properties of their analytifications).
\end{remark}

\section{Arithmetic hyperbolicity via non-rational curves}

\begin{Theorem} \label{thm:deg4} Fix a number field $k$ and suppose $\bar{C} \subset \mathbb{P}^n$ is a projective smoothly branched curve which is not contained in any hyperplane. Let $C \to \bar{C}$ be its normalization and suppose $C$ has genus $\geq 1$. Then $\check{\mathbb{P}}^n \setminus (\bar{C})^*$ is arithmetically hyperbolic. \end{Theorem}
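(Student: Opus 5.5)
We apply Strategy \ref{strategy} with the moduli stack $\mathcal{M}_{g,\Sigma_d}$, where $g \geq 1$ is the genus of $C$ and $d = \deg \bar{C}$. Since $\bar{C}$ is not contained in a hyperplane and is not a line (its normalization has genus $\geq 1$), we have $d \geq 2$. Put $X = \check{\mathbb{P}}^n \setminus (\bar{C})^*$, and after enlarging $k$ assume $C$ is geometrically connected.

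On $\check{\mathbb{P}}^n \times C$ we take the trivial family $P = \check{\mathbb{P}}^n \times C \to \check{\mathbb{P}}^n$ and the incidence subscheme $I = \{([H],p) : p \in f^{-1}(H)\}$, where $f\colon C \to \bar{C} \subset \mathbb{P}^n$ is the normalization. The subscheme $I$ is the zero locus of the universal section of $\mathcal{O}_{\check{\mathbb{P}}^n}(1)\boxtimes f^*\mathcal{O}(1)$. This section never vanishes identically on a fiber, because $\bar{C}$ lies in no hyperplane. Hence $I$ is a relative Cartier divisor, so it is finite flat of degree $d$ over $\check{\mathbb{P}}^n$ (projection from $I$ is proper and quasi-finite). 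By Lemma \ref{lemma:dual}, the fiber $I_{[H]} = C \times_{\mathbb{P}^n} H$ is étale exactly when $[H] \notin (\bar{C})^*$. So over $X$ the pair $(C_X, I_X)$ defines a classifying morphism $c\colon X \to \mathcal{M}_{g,\Sigma_d}$, and $\mathcal{M}_{g,\Sigma_d}$ is arithmetically hyperbolic by Corollary \ref{cor:binaryformsAH}.

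By Lemma \ref{lem:nonconstant} (applied over every finite extension $L/k$), it suffices to show that the induced map $X \to M_{g,\Sigma_d}$ is quasi-finite. Since it is a morphism of finite type $k$-schemes, it is enough to check on $\bar{k}$-points, or on $\mathbb{C}$-points after embedding $\bar{k}\hookrightarrow\mathbb{C}$. Suppose some fiber contains an irreducible curve $B \subset X_{\mathbb{C}}$. Its closure $\bar{B} \subset \check{\mathbb{P}}^n$ is a projective curve. Since $(\bar{C})^*$ is a hypersurface (or in any case nonempty of positive codimension, and we reduce to the hypersurface case when $n \geq 2$ via Remark \ref{rem:duals}), it is ample. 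Therefore $\bar{B}$ meets $(\bar{C})^*$ at some point $y$.

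Now take $Y$ to be the normalization of $\bar{B}$, and pull back $P$ and $I$ to $Y$. This gives the setup of Section \ref{sec:one-param_degen}, since the family is a trivial product, hence isotrivial, with fiber $C$. The map $I_Y \to Y$ is finite flat of degree $d$, étale over the dense open preimage of $B$, and branched over a point lying over $y$. By Lemma \ref{lemma:analytic_degen_classifying_map}(1) and Remark \ref{remark:analytic_degen_classifying_map}, the classifying map on the étale locus is non-constant on $\pi_0(\mathcal{M}_{g,\Sigma_d}(\mathbb{C}))$. But $B$ was contracted to a single point of the coarse space, and points of the coarse space over $\mathbb{C}$ are exactly isomorphism classes, so the classifying map is constant on $B$. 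This contradiction shows that $X \to M_{g,\Sigma_d}$ is quasi-finite, and Lemma \ref{lem:nonconstant} then gives arithmetic hyperbolicity.

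The main obstacle is the bookkeeping that makes the degeneration argument apply. We need $I$ to be flat with fibers exactly $C\times_{\mathbb{P}^n}H$, and we need the étale locus to coincide with the complement of $(\bar{C})^*$; this is precisely what Lemma \ref{lemma:dual} provides, and it is where the smoothly branched hypothesis is used. We also need that every complete curve in $\check{\mathbb{P}}^n$ meets $(\bar{C})^*$. For this I would use that $(\bar{C})^*$ is a nonempty hypersurface, which follows from the $\mathbb{P}^{n-2}$-bundle description in the proof of Lemma \ref{lemma:dual} together with a dimension count showing that $I \to \check{\mathbb{P}}^n$ is generically finite.
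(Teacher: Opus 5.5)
Your proposal is correct and is essentially the paper's proof of the genus-one case, run uniformly for all $g\ge 1$ through $\mathcal{M}_{g,\Sigma_d}$ and Lemma~\ref{lemma:analytic_degen_classifying_map}; the paper's remark after the theorem notes that this works for $g\ge 2$ as well. Your extra bookkeeping is sound, including the reduction of quasi-finiteness to closed fibers and the fact that $(\bar C)^*$ is a hypersurface, which is immediate because the tangency incidence sits inside your finite incidence $I\to\check{\mathbb{P}}^n$.

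For $g\ge 2$ the paper argues differently. It maps $\check{\mathbb{P}}^n$ to the \emph{separated} Hilbert scheme $\mathrm{Hilb}^d_{C/k}$ and uses arithmetic hyperbolicity of $\mathrm{Hilb}^{d,\mathrm{sm}}_{C/k}$ (Faltings plus Chevalley--Weil). Non-constancy on a curve meeting $(\bar C)^*$ then follows simply because that curve parametrizes both \'etale and non-\'etale members, so the analytic degeneration argument your uniform route relies on is not needed there.
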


\begin{proof} Let $d$ denote the degree of $\bar{C}$.
Note $d \geq 2$. We deal with the case when the genus of $C$ is $g \geq 2$ first. There is a natural morphism of functors $f\colon \check{\mathbb{P}}^n_k \to \mathrm{Hilb}^{d}_{C/k}$ sending a $T$-flat family of hyperplanes $\mathcal{H} \subset \mathbb{P}^n_T$ to the pullback $C_T \times_{\mathbb{P}^n_T} \mathcal{H} \subset C_T$. This is a $T$-finite-flat family of subschemes in $C$ by \cite[Tag 00MF]{stacks-project} and by B\'ezout's theorem it is of degree $d$ over $T$. Moreover, by Lemma \ref{lemma:dual}, $f^{-1}(\mathrm{Hilb}^{d,\mathrm{sm}}_{C/k})=\check{\mathbb{P}}^n_k \setminus (\bar{C})^*$. Now since $\mathrm{Hilb}^{d,\mathrm{sm}}_{C/k}$ is arithmetically hyperbolic (see Remark \ref{rem:hilbcurveAH}), it suffices to show $f$ is quasi-finite over $\check{\mathbb{P}}^n \setminus (\bar{C})^*$. This is immediate: a family of hyperplanes $\mathcal{H}$ parametrized by a closed curve $Y \subset \check{\mathbb{P}}^n$ which is not contained in $(\bar{C})^*$ must intersect $(\bar{C})^*$ (because $(\bar{C})^*$ is a hypersurface) and therefore $ C_Y \times_{\mathbb{P}^n_Y} \mathcal{H}  \subset C_Y$ supports \'etale and non-\'etale closed subschemes over $Y$.

For the genus $g=1$ case, let $\mathcal{H} \subset \mathbb{P}^n \times \check{\mathbb{P}}^n$ denote the universal hyperplane and set $I=\mathcal{H}|_{C \times \check{\mathbb{P}}^n} \subset C \times \check{\mathbb{P}}^n$. The fiber of the projection $I \subset C \times \check{\mathbb{P}}^n \to \check{\mathbb{P}}^n$ over a hyperplane $[H]$ is $H \times_{\mathbb{P}^n} C \subset C$, and by Lemma \ref{lemma:dual} this is an \'etale divisor of degree $d$ in $C$ when $[H]$ doesn't lie on $(\bar{C})^*$. Thus, $I$ is finite \'etale of degree $d$ over $\mathbb{P}^n \setminus (\bar{C})^*$, hence there are classifying maps $F\colon \check{\mathbb{P}}^n \setminus (\bar{C})^* \to \mathcal{M}_{1,\Sigma_d} \to M_{1,\Sigma_d}$. By Corollary \ref{cor:binaryformsAH} and Lemma \ref{lem:nonconstant}, it suffices to show $F$ doesn't contract any curve $Y^{\circ} \subset \check{\mathbb{P}}^n \setminus (\bar{C})^*$. The non-constancy of $F|_{Y^{\circ}}$ can be checked after base change along an embedding $k \to \mathbb{C}$, so we now assume $k=\mathbb{C}$. Let $Y$ be the closure of $Y^{\circ}$. Then there is a point $p \in Y \cap (\bar{C})^*$ and, perhaps after replacing $Y$ with an open neighborhood of $p$ in $Y$, we see that Lemma \ref{lemma:analytic_degen_classifying_map}
(and Remark \ref{remark:analytic_degen_classifying_map}) imply that $F|_{Y^{\circ}}$ cannot be constant. \end{proof}

\begin{remark}
The proof method of Theorem \ref{thm:deg4}
for the case of genus $g = 1$ in fact works for the case
of genus $g \geq 2$ as well.
It will also work for the case $C$ of genus $g = 0$.
We postpone the genus $g = 0$ case to Proposition \ref{lemma:dualrational},
as the statement needs more hypotheses to be correct
(see Example \ref{ex:sharpness}). The genus $g \geq 1$
is already enough for some applications, as we explain next.
\end{remark}

\begin{corollary} \label{cor:deg4plane} Let $k$ be a number field.
For any smooth projective geometrically integral plane curve $C$ over $k$,
the variety $\check{\mathbb{P}}^2 \setminus C^*$ is arithmetically hyperbolic
if and only if $\deg(C) \geq 3$.\end{corollary}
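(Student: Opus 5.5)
The statement has two directions, and I would treat them separately. For the ``if'' direction, suppose $\deg(C)\geq 3$. A smooth plane curve of degree $d$ has genus $(d-1)(d-2)/2\geq 1$, and it is geometrically integral. It is trivially smoothly branched, since its normalization is the identity. It is not contained in any line because $d\geq 2$. Theorem \ref{thm:deg4} with $n=2$ then applies directly and gives arithmetic hyperbolicity of $\check{\mathbb{P}}^2\setminus C^*$. So this direction is immediate.

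For the ``only if'' direction, I would show that arithmetic hyperbolicity fails for $d=1$ and for $d=2$. Here it is enough to exhibit infinitely many integral points after a finite extension $L/k$ and enlarging $S$.

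If $d=1$, then $C$ is a line. Its dual $C^*$ is a single point of $\check{\mathbb{P}}^2$, under the convention of Definition \ref{def:dual}: the hyperplanes tangent to a line are those containing it. The complement of a point in $\mathbb{P}^2$ contains lines minus the point, i.e.\ copies of $\mathbb{A}^1$. These have infinitely many $\mathcal{O}_{k,S}$-points, so arithmetic hyperbolicity fails.

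If $d=2$, then $C$ is a smooth conic and $C^*$ is again a smooth conic; after extending $k$ we may take $C^*=\{UW=V^2\}$. I would then find a curve in $\check{\mathbb{P}}^2$ meeting $C^*$ in a single point, for instance a tangent line $\ell$. Then $\ell\setminus(\ell\cap C^*)\cong\mathbb{A}^1$, which has infinitely many integral points. These are also integral for the ambient pair: the closure of $\ell$ in a model meets the closure of $C^*$ only along the closure of the tangency point, possibly after enlarging $S$. Concretely, I would use the tangent line $W=0$ and the points $[1:a:0]$ with $a\in\mathcal{O}_{k,S}$. The quadratic form $UW-V^2$ evaluated at these points gives $-a^2$, which fails to be a unit for most $a$, so this needs care; see the next paragraph.

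The main obstacle is exactly this integrality check, since Definition \ref{def:integral points} requires the closures of the points and of $C^*$ to be disjoint in the model. I would resolve it by working in the affine chart $U\neq0$. Since $C^*\cap\{U=0\}=\{[0:0:1]\}$ is a single point, I would try to use points away from the chart boundary. The cleanest choice is to take $x=v-w^2$-type coordinates: $\check{\mathbb{P}}^2\setminus C^*$ contains $\{[1:v:w]: w-v^2\in\mathcal{O}_{k,S}^\times\}$. This set is infinite, for example $w=v^2+1$ with $v\in\mathcal{O}_{k,S}$. The complement also contains the points at infinity except $[0:0:1]$, which are harmless. Concretely, the points $[1:v:v^2+1]$ with $v\in\mathcal{O}_{k,S}$ reduce modulo every prime to points off the reduction of $\{UW=V^2\}$, because $1\cdot(v^2+1)-v^2=1$. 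This gives infinitely many $(C^*,S)$-integral points, so $\check{\mathbb{P}}^2\setminus C^*$ is not arithmetically hyperbolic when $d=2$. The $d=1$ case is settled the same way, using the affine plane minus a point.
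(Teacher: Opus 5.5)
Your proof is correct and follows the paper's. The case $\deg(C)\ge 3$ is Theorem \ref{thm:deg4}, and for $\deg(C)\le 2$ you exhibit a curve in the complement with infinitely many integral points: the paper uses a secant line, giving $\mathbb{P}^1\setminus\{0,\infty\}$, while you use an $\mathbb{A}^1$ meeting $C^*$ in one point and check integrality explicitly. Your detour through $[1:a:0]$ was only a mis-parametrization of the tangent line $W=0$; its points off $C^*$ are $[a:1:0]$, where $UW-V^2=-1$ is a unit, so the tangent line would have worked directly.
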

\begin{proof}
The case of $\deg(C) \geq 3$ is immediate from Theorem \ref{thm:deg4}.
If $\deg(C) = 2$, then $\deg(C^*) = 2$ and $\check{\mathbb{P}}^2 \setminus C^*$
cannot be arithmetically hyperbolic (because
it contains copies of $\mathbb{P}^1 \setminus \{ 0, \infty \}$,
which is not arithmetically hyperbolic).
If $\deg(C) = 1$, then $C$ is a line and its dual is simply a point.
\end{proof}

As an application, we answer a question of Achenjang--Morrow, see \cite[Q.1]{MR4749147}: if $k$ is a number field and $X$ is a geometrically connected, smooth, and projective $k$-scheme, then there is a geometrically irreducible divisor $D \subset X$ such that $X \setminus D$ is arithmetically hyperbolic. In fact, our method shows that the degree of $D$ can be controlled. For integers $r \geq 0$, recall that a locally Noetherian scheme $X$ is said to be $S_r$ if every local ring $\mathcal{O}_{X,p}$ has $\mathrm{depth}(\mathcal{O}_{X,p}) \geq \mathrm{min}\{r,\mathrm{dim}(\mathcal{O}_{X,p})\}$.
If $X$ is locally of finite type over a field, then having property $S_r$ is preserved after extending the base field (see, e.g., \cite[Thm. 23.3]{matsumura}). We begin with a lemma.

\begin{lemma} \label{lem:kleiman} Fix a field $k$ of characteristic zero, and suppose $\Delta \subset \mathbb{P}^n$ is a geometrically integral ample divisor. If $X \to \mathbb{P}^n$ is a finite morphism, $X$ has dimension $\geq 2$ and is a geometrically integral $S_2$ projective scheme, then there is an open subscheme $U \subset \mathrm{PGL}_{N,k}$ with $U(k)\neq \varnothing$ such that for any rational point $g \in U(k)$, the fiber product $g\Delta \times_{\mathbb{P}^n} X$ is a geometrically integral divisor of $X$. \end{lemma}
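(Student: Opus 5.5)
The plan is a Bertini-type argument for irreducibility and a Kleiman-type argument for the divisor property, carried out over $\bar{k}$ and then descended. Here $N=n+1$, so $\mathrm{PGL}_{N}$ acts on $\mathbb{P}^n$. Write $f\colon X\to\mathbb{P}^n$ for the finite morphism, and consider the incidence scheme
\[\mathcal{Z}=\{(g,x) : f(x)\in g\Delta\}\subset \mathrm{PGL}_{N}\times X.\]
The map $\mathcal{Z}\to X$ is a fibration whose fiber over $x$ is $\{g : g^{-1}f(x)\in\Delta\}$. Since $\mathrm{PGL}_N$ acts transitively, this fiber is a translate of the preimage of $\Delta$ under the orbit map $\mathrm{PGL}_N\to\mathbb{P}^n$. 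That orbit map is smooth with geometrically integral fibers, so each fiber is geometrically integral of codimension one in $\mathrm{PGL}_N$. Hence $\mathcal{Z}$ is geometrically irreducible of dimension $\dim \mathrm{PGL}_N+\dim X-1$, and the fiber of $\mathcal{Z}\to\mathrm{PGL}_N$ over $g$ is $g\Delta\times_{\mathbb{P}^n}X$.

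\textbf{Step 1 (irreducibility).} Apply Bertini's irreducibility theorem in characteristic zero (Jouanolou's form): for a morphism $\mathcal{Z}\to \mathrm{PGL}_N$ whose generic fiber has dimension $\dim X-1\geq 1$, the geometric generic fiber is irreducible. Equivalently, show that $\mathcal{Z}\to\mathrm{PGL}_N$ has geometrically irreducible generic fiber. Then, by constructibility (EGA IV, 9.7.7), the set of $g$ with geometrically irreducible fiber contains a dense open $U_1$. The hypothesis $\dim X\ge 2$ is used here: for curves the fiber is finite and irreducibility fails. I expect this to be the main obstacle. I would first try to reduce to the classical Bertini statement by factoring $\mathcal{Z}\to\mathrm{PGL}_N$ through the ample linear system defining $\Delta$: $g\Delta$ ranges over a family of divisors of $\mathbb{P}^n$, and their pullbacks under the finite map $f$ form a family of divisors in a linear system on $X$ whose image has dimension $\geq 2$. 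If that reduction does not go through cleanly, I would instead apply Jouanolou's theorem directly to $\mathcal{Z}$, after checking that $\mathcal{Z}\to\mathrm{PGL}_N$ is dominant and that its image is not a curve.

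\textbf{Step 2 (reducedness and the divisor property).} By Kleiman transversality in characteristic zero, for general $g$ the translate $g\Delta$ meets $f$ generically transversally. Concretely:
\begin{itemize}
\item On the dense open of $X$ where $X$ is smooth and $f$ is étale onto its image stratum, the pullback of $g\Delta_{\mathrm{sm}}$ is smooth.
\item $g\Delta$ avoids the generic points of the finitely many codimension-one loci where this fails, such as $f^{-1}(\Delta_{\mathrm{sing}})$ and the non-smooth locus of $X$; this holds for general $g$.
\end{itemize}
Together these make the fiber generically reduced. The fiber $g\Delta\times_{\mathbb{P}^n}X$ is locally cut out by one equation, namely the pullback of the local equation of $g\Delta$. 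That equation is not a zero divisor, since $X$ is integral and $f(X)\not\subset g\Delta$ for general $g$. So the fiber is an effective Cartier divisor. Because $X$ is $S_2$, this Cartier divisor is $S_1$, so it has no embedded points. Generically reduced together with $S_1$ gives reduced. Combined with Step 1, the fiber is geometrically integral on a dense open $U_2$.

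\textbf{Step 3 (rational points).} Set $U=U_1\cap U_2$, a dense open of $\mathrm{PGL}_N$ defined over $k$. All properties above are checked after base change to $\bar{k}$ and are compatible with it, which is where we use that $S_2$ is preserved under field extension. Since $\mathrm{PGL}_N$ is a rational variety over the infinite field $k$, its $k$-points are dense, so $U(k)\neq\varnothing$.
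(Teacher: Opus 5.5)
\textbf{The gap is Step 1.} That step carries essentially all the content of the lemma, and it is asserted rather than proved.

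The ``Jouanolou form'' you state is not a theorem. A dominant morphism with positive-dimensional generic fiber and image of dimension $\geq 2$ can still have geometrically reducible generic fiber. For example, take $\mathbb{A}^3\to\mathbb{A}^2$, $(x,y,z)\mapsto(x^2,y)$. So your second fallback, as stated, proves nothing.

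Jouanolou's irreducibility theorem concerns $f^{-1}(H)$ for a \emph{general hyperplane} $H$, i.e.\ general members of a linear system. Your first fallback does not reach that situation. The divisors $g\Delta$ are not general members of $|\mathcal{O}_{\mathbb{P}^n}(e)|$, $e=\deg\Delta$. They form the $\mathrm{PGL}_{n+1}$-orbit of $[\Delta]$, a family of dimension at most $n^2+2n$. For $e\geq 3$ (recall $n\geq 2$) this is a proper subvariety of the linear system, and Bertini for a linear system says nothing about the general member of a proper subvariety of it.

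Step 2 is essentially fine. Note only that the locus to control is $f^{-1}(g\Delta_{\mathrm{sing}})$, not $f^{-1}(\Delta_{\mathrm{sing}})$; it moves with $g$ and has codimension $\geq 2$ for general $g$ by Kleiman's dimension statement. You could also simply invoke Kleiman's theorem with ``reduced'' in place of ``regular''.

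\textbf{How the paper gets irreducibility.} It uses a global connectedness statement rather than Bertini. Over $\bar k$, $D_g=g\Delta\times_{\mathbb{P}^n}X$ is an effective Cartier divisor with ample line bundle $f^*\mathcal{O}(e)$ (as $f$ is finite), on an integral projective $S_2$ scheme of dimension $\geq 2$. Hence $D_g$ is connected (Stacks Project, Tag 0FD9). The paper combines this with local integrality of $D_g$ for general $g$, obtained from Kleiman's transversality theorem. Since a connected noetherian scheme whose local rings are domains is integral, $D_g$ is integral. Your reducedness plus connectedness would not suffice on its own: you also need the irreducible components of $D_g$ to be pairwise disjoint.

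\textbf{A fix within your Bertini framework.} If you prefer to keep Jouanolou, the correct linear system lives on the join, not on $|\mathcal{O}(e)|$. Put $\hat X=X\times_{\mathbb{P}^n}(\mathbb{A}^{n+1}\setminus 0)$ and $\hat\Delta=\Delta\times_{\mathbb{P}^n}(\mathbb{A}^{n+1}\setminus 0)$. Then:
\begin{itemize}
\item for $A\in\mathrm{GL}_{n+1}$, the preimage in $\hat X\times\hat\Delta$ of the graph $\Lambda_A=\{(u,Au)\}\subset\mathbb{A}^{2n+2}$ is a $\mathbb{G}_m$-torsor over $A^{-1}\Delta\times_{\mathbb{P}^n}X$;
\item the graphs $\Lambda_A$ fill a dense open subset of the Grassmannian of $(n+1)$-planes;
\item applying Jouanolou's theorem $n+1$ times to $\hat X\times\hat\Delta\to\mathbb{P}^{2n+1}$, whose image has dimension $\dim X+n$, yields geometric irreducibility precisely because $\dim X\geq 2$.
\end{itemize}
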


\begin{proof} Apply the conclusion of \cite[Thm. 2 (i), Rem. 7, Thm. 2 (ii)]{MR360616} (with the adjective "locally integral" in place of "regular") to the maps $X_{\bar{k}} \to \mathbb{P}^N_{\bar{k}} \leftarrow \Delta_{\bar{k}}$ to obtain a dense open $U \subset \mathrm{PGL}_{N,\bar{k}}$ with the property that for any $\bar{g} \in U(\bar{k})$ the product $\bar{D}=\bar{g}\Delta_{\bar{k}} \times_{\mathbb{P}^N_{\bar{k}}} X_{\bar{k}}$ is a locally integral divisor of $X_{\bar{k}}$. Since $X_{\bar{k}}$ is $S_2$ and integral, $\bar{D}$ must also be connected by \cite[Tag 0FD9]{stacks-project}, so it must be integral. A standard Galois descent argument shows that there is a nonempty open subscheme $V \subset \mathrm{PGL}_{N,k}$ with $V_{\bar{k}} \subset U$. Moreover, since $\mathrm{PGL}_{N,k}$ is rational, $V$ has a $k$-rational point and if $g$ is in $V(k)$ the fiber product $g\Delta \times_{\mathbb{P}^n} X$ is geometrically integral. \end{proof} 

\begin{corollary} \label{cor:morrowachen} Let $k$ be a number field and suppose $X$ is a geometrically integral projective $k$-scheme which is $S_2$ and of dimension $d \geq 2$. Fix a very ample divisor $H$. Then for every even integer $s\geq  2d+2$, there is a geometrically irreducible divisor $D \subset X$ such that $X \setminus D$ is arithmetically hyperbolic and $D \in |sH|$. \end{corollary}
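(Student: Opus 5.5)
We will realize $D$ as the pullback of a suitably moved dual curve under a finite map $X \to \mathbb{P}^d$. Since $H$ is very ample, $X \subset \mathbb{P}^M$. A general linear projection from a linear space disjoint from $X$ then gives a finite morphism $p\colon X \to \mathbb{P}^d$ with $p^*\mathcal{O}(1) \cong \mathcal{O}_X(H)$. Such a center exists over $k$ because $k$ is infinite, and finiteness follows from the center missing $X$.

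Next we choose a curve in $\mathbb{P}^d$ whose dual hypersurface has degree exactly $s$. Take $C$ to be a smooth curve of genus $g \geq 1$, of degree $e$, embedded in $\mathbb{P}^d$ over $k$ and not contained in any hyperplane. For a smooth nondegenerate curve, the degree of the dual hypersurface $C^*$ is the number of hyperplanes in a general pencil tangent to $C$. By Riemann--Hurwitz for the pencil map $C\to\mathbb{P}^1$ of degree $e$, this number is $2e+2g-2$. The plan is to take $g=1$ and $e=s/2$, which gives $\deg C^* = s$. Here $s/2 \geq d+1$, and an elliptic curve over $k$ embedded by a complete linear system of degree $e\geq d+1$ can be projected to a nondegenerate smooth curve in $\mathbb{P}^d$ (for $d\geq 3$ a generic projection from $\mathbb{P}^{e-1}$ is an embedding). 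For $d=2$ we instead allow nodal images; these are smoothly branched and Theorem \ref{thm:deg4} still applies. In the nodal case the degree count still uses the normalization, and the dual degree is again $2e+2g-2$. The hypothesis that $s$ is even with $s \geq 2d+2$ is exactly what makes this choice of $e$ possible. We also need a genus-one curve with a $k$-rational divisor of degree $e$; taking any elliptic curve $E/k$ with its origin $O$, the divisor $eO$ works.

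By Theorem \ref{thm:deg4}, $\check{\mathbb{P}}^d\setminus C^*$ is arithmetically hyperbolic, and $C^*$ is a geometrically integral hypersurface of degree $s$, hence ample. Apply Lemma \ref{lem:kleiman} with $\Delta = C^*$ and the finite map $p$. This produces $g\in\mathrm{PGL}_{d+1}(k)$ such that $D := g C^* \times_{\mathbb{P}^d} X$ is a geometrically integral divisor on $X$, and $D\in|sH|$ because $p^*\mathcal{O}(s)=\mathcal{O}_X(sH)$. Since $g$ is an automorphism, $\mathbb{P}^d\setminus gC^*$ is still arithmetically hyperbolic. The restriction $X\setminus D \to \mathbb{P}^d\setminus gC^*$ is finite, in particular quasi-finite. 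Integral points pull forward along a map of models, and a finite map has finite fibers, so $X\setminus D$ is arithmetically hyperbolic. This last step can be phrased via Lemma \ref{lem:nonconstant} with a scheme target, or proved directly by spreading out to $\mathcal{O}_{L,S}$.

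The main obstacle I expect is the degree bookkeeping: producing, over $k$ itself, a smoothly branched nondegenerate genus-$\geq 1$ curve in $\mathbb{P}^d$ whose dual has degree exactly $s$. This includes verifying the dual-degree formula for smoothly branched curves in $\mathbb{P}^d$ with $d>2$, so that $C^*$ is a hypersurface rather than of smaller dimension. Our first attempt will be the generic projection of an elliptic normal curve of degree $s/2$. A secondary check is that Lemma \ref{lem:kleiman}, which is stated with $\mathrm{PGL}_N$, is applied to the target $\mathbb{P}^d$.
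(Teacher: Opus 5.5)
Your proof is correct. Its skeleton matches the paper's: a genus-one curve of degree $s/2$, its dual hypersurface of degree $s$, Theorem \ref{thm:deg4}, Lemma \ref{lem:kleiman}, and pullback along a finite map. The difference is where you place the curve.

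\textbf{The paper's route.} The paper never projects the curve. It keeps $C$ as the elliptic normal curve in $\mathbb{P}^n$, with $n=s/2-1\geq d$, and reads off $\deg C^*=2\deg L=s$ from Tevelev's formula. It then composes the finite projection $X\to\mathbb{P}^d$ with a linear embedding $\mathbb{P}^d\hookrightarrow\mathbb{P}^n$. Lemma \ref{lem:kleiman} is applied to the finite map $X\to\mathbb{P}^n$, and the conclusion follows from the quasi-finite map $X\setminus D\to\mathbb{P}^n\setminus gC^*$.

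\textbf{Relation to your route.} The two constructions are dual to each other. If $C'$ is the projection of $C$ from a center $\Lambda$ disjoint from $C$, then Lemma \ref{lemma:dual} gives $(C')^*=C^*\cap\Lambda^{\perp}$ set-theoretically, with $\Lambda^{\perp}\cong\check{\mathbb{P}}^d$. So the paper's $gC^*\cap\mathbb{P}^d$ is the dual of a generic projection of a translate of $C$, which is the same kind of hypersurface you build directly in $\mathbb{P}^d$.

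\textbf{What the paper's route buys.} The projective geometry you flagged as the main obstacle disappears entirely. There is no need for generic-projection facts: an embedding into $\mathbb{P}^d$ for $d\geq 3$, or, for $d=2$, a $k$-rational center off the tangent surface and the trisecant and tangent-secant loci giving an unramified birational nodal plane model. There is also no need to recompute the class $2e+2g-2$ of the projected, possibly nodal, curve via Riemann--Hurwitz and a simple-tangency and transversality argument for a general pencil. Theorem \ref{thm:deg4} is only used for a smooth curve.

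\textbf{What your route buys.} You realize $D$ as the pullback, under a finite surjection $X\to\mathbb{P}^d$, of the dual of an explicit curve in $\mathbb{P}^d$. This uses the smoothly branched case of Theorem \ref{thm:deg4} when $d=2$, at the cost of the classical inputs above, all of which do hold in characteristic $0$ over the infinite field $k$. Your explicit choice of an elliptic curve $(E,O)$ with the divisor $eO$ also makes explicit that a degree-$s/2$ line bundle exists over $k$. The paper leaves this implicit, and a genus-one curve without rational points need not carry such a bundle.
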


\begin{proof} Let $C/k$ be a fixed smooth projective geometrically integral curve of genus $1$. Suppose $s$ is an even integer $\geq 2d+2$ and let $L$ denote a line bundle on $C$ of degree $\frac{s}{2}$. Since $C$ has genus $1$ and $L$ has degree $\geq 3$, $L$ is necessarily very ample. Moreover, $h^0(C,L) = \frac{s}{2} \geq d+1$, so the complete linear system of $L$ yields an embedding $C \to \mathbb{P}^{n}_k$ where $n \geq d$. Moreover, by Theorem \ref{thm:deg4}, $\mathbb{P}^n \setminus C^*$ is arithmetically hyperbolic and by \cite[Ex. 7.7]{tevelev2001projectivelydualvarieties} $C^*$ is a hypersurface of degree $2\mathrm{deg}(L)=s$. 

Now consider the embedding defined by $H$, $f\colon X \to \mathbb{P}^n$ and by projecting along an appropriate linear subspace we obtain a finite morphism $X \to \mathbb{P}^d_k$, which we embed as a linear subspace of $\mathbb{P}^n$:
\begin{center}
\begin{tikzcd}
X \arrow[r, hook] \arrow[dr]
& \mathbb{P}^n \arrow[d, dashed]\\
& \mathbb{P}^d_k \arrow[r, hook, "\mathrm{linear}"] & \mathbb{P}^n
\end{tikzcd}
\end{center}

Now apply Lemma \ref{lem:kleiman} to the morphisms $X \to \mathbb{P}^n$ and $C^* \to \mathbb{P}^n$, to obtain a $g \in \mathrm{PGL}_{N,k}(k)$ and a translate $gC^*$ so that $X \times_{\mathbb{P}^n} C^*=D$ is a geometrically integral divisor in $X$. Since $gC^*$ is linearly equivalent to $C^*$, it follows that $D \in |sH|$, as desired. The arithmetic hyperbolicity follows because $X \setminus D \to \mathbb{P}^n \setminus gC^*$ is quasi-finite and the latter is arithmetically hyperbolic.\end{proof}


\section{A construction for Zariski degeneracy and arithmetic hyperbolicity
using marked curve fibrations} 

In this section, we establish the following general construction for Zariski degeneracy and arithmetic hyperbolicity,
using families of marked curves.

\begin{Theorem}\label{ThmMainCriterion}
Let $k$ be a number field, and suppose $X$ is a geometrically integral separated scheme of finite type over $k$. Let $\pi\colon P\to X$ be a proper and smooth morphism of schemes whose geometric fibers are smooth projective connected curves of a fixed genus $g$.
Fix an integer $r \geq 2$
if $g \geq 1$,
and an integer $r \geq 4$ if $g = 0$.

Let $I \subseteq P$ and $\Delta\subsetneq X$ be closed subschemes such that above $X \setminus \Delta$, the map $\pi|_I$ is finite \'etale of degree $r$. Consider the following properties:
\begin{itemize}
\item[(i)] There exists a geometric point $x\in \Delta(\bar{k})$ above which $I$ is finite and flat (after base-change to $\overline{k}$), and the scheme-theoretic fiber $I_x \to \Spec \bar{k}$ is non-reduced. If $g = 0$, we assume all points in $I_x$ have multiplicity $\leq r-2$.
\item[(ii)] The map $\pi|_I$ is finite flat and for every geometric point $x\in \Delta(\bar{k})$, the scheme-theoretic fiber $I_x \to \Spec \bar{k}$ is non-reduced. If $g = 0$, we assume all points in $I_x$ have multiplicity $\leq r-2$. Suppose also that $\Delta$ meets all closed curves in $X$ (e.g. if $X$ is proper and $\Delta$ is given by an effective strictly nef Cartier divisor). \end{itemize}
If (i) (resp. (ii)) holds then every set of $S$-integral points
on $X \setminus \Delta$ is Zariski degenerate (resp. finite).
\end{Theorem}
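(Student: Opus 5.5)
The plan is to run Strategy \ref{strategy} with the moduli stack $\mathcal{M}_{g,\Sigma_r}$. Over $U := X\setminus\Delta$ the pair $(P|_U, I|_U)$ is an object of $\mathcal{M}_{g,\Sigma_r}(U)$: $\pi$ is proper smooth with genus $g$ fibers, and $I|_U\to U$ is finite \'etale of degree $r$. This gives a classifying map $c\colon U\to\mathcal{M}_{g,\Sigma_r}$ and, composing, a map $\bar c\colon U\to M_{g,\Sigma_r}$ to the coarse space of Remark \ref{rem:keelmori}. By Corollary \ref{cor:binaryformsAH} (base-changed from $\mathbb{Q}$ to $k$), $\mathcal{M}_{g,\Sigma_r}$ is arithmetically hyperbolic, because $r\ge 2$ when $g\ge1$ and $r\ge4$ when $g=0$. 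Lemma \ref{lem:nonconstant} therefore reduces the theorem to two geometric claims: under (i), $\bar c$ is non-constant; under (ii), $\bar c$ is quasi-finite. Both can be checked after base change to $\mathbb{C}$ along an embedding $\bar k\hookrightarrow\mathbb{C}$.

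For (i), suppose $\bar c$ is constant. The fibers of $\pi$ over $U(\mathbb{C})$ are then all isomorphic to a fixed curve $C_0$. Restrict to an irreducible open neighbourhood $Y$ of $x$ in $X_{\mathbb{C}}$ over which $I$ is finite flat (flatness is open, and finiteness over a neighbourhood follows from properness plus quasi-finiteness near $x$). To make the isotriviality hypothesis of the setup hold over all of $Y$, including $Y\cap\Delta$, I would use that the $j$-invariant, or the moduli point in $M_g$, of $P_y$ is a morphism $X\to M_g$. It is constant on the dense set $U$, hence constant on $Y$, so all fibers of $P_Y\to Y$ are isomorphic to $C_0$. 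For $g=0$ this is automatic. Now $I_Y\to Y$ is finite flat of degree $r$, and it is generically \'etale because it is \'etale over $U\cap Y\neq\varnothing$. The point $x$ lies in its branch locus, since $I_x$ is non-reduced, and when $g=0$ the multiplicity bound $\le r-2$ is exactly the hypothesis of Lemma \ref{lemma:analytic_degen_classifying_map}(2). By Lemma \ref{lemma:analytic_degen_classifying_map} together with Remark \ref{remark:analytic_degen_classifying_map}, the classifying map on $U\cap Y$ is non-constant, which contradicts the assumption. We conclude that $\bar c$ is non-constant and the integral points are Zariski degenerate.

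For (ii), suppose $\bar c$ is not quasi-finite. Then some fiber contains an irreducible curve $Y^\circ\subset U_{\mathbb{C}}$ contracted by $\bar c$. Let $Y$ be its closure in $X_{\mathbb{C}}$, which is a closed curve. By hypothesis $Y$ meets $\Delta$ at some point $x$, and $I_x$ is non-reduced with the required multiplicity bound. Over the normalization $\tilde Y$ of $Y$, the pulled-back family is finite flat, since $\pi|_I$ is finite flat globally, and it is isotrivial by the same argument as before: the curve moduli point is constant on the dense $Y^\circ$, hence on all of $\tilde Y$. It is generically \'etale with a branch point over $x$. Replacing $\tilde Y$ by a small neighbourhood of a preimage of $x$, Lemma \ref{lemma:analytic_degen_classifying_map} shows the classifying map is non-constant on $Y^\circ$, a contradiction. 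Hence $\bar c$ is quasi-finite and Lemma \ref{lem:nonconstant} gives finiteness. The positive-dimensional fibers that need to be excluded are not necessarily proper curves, but any positive-dimensional subvariety of $U$ contains a curve, and the hypothesis applies to the closure of that curve in $X$.

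The main obstacle I expect is making isotriviality hold across the boundary. Non-constancy in $M_{g,\Sigma_r}$ could come solely from the marked points, while the Lemma requires the underlying curves to be constant over the whole neighbourhood, including over $\Delta$. I handle this through the induced map to $M_g$ (or via $j$), using continuity and the fact that $P$ is smooth over all of $X$. Otherwise the curve moduli already vary and non-constancy is immediate. A secondary technical point is checking that finite flatness is preserved under the restrictions and base changes.
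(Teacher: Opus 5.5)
Your proposal is correct and follows essentially the same route as the paper: build the classifying map $X\setminus\Delta \to \mathcal{M}_{g,\Sigma_r}\to M_{g,\Sigma_r}$, reduce via Corollary \ref{cor:binaryformsAH} and Lemma \ref{lem:nonconstant} to non-constancy (resp. quasi-finiteness), and rule out constancy along a subvariety meeting $\Delta$ at a non-reduced fiber by Lemma \ref{lemma:analytic_degen_classifying_map} in the isotrivial case. Your write-up is somewhat more explicit than the paper's on two points it leaves implicit. You show why isotriviality over $X\setminus\Delta$ extends across $\Delta$, via the map to the coarse space of curves. You also show why $I$ is finite flat over a whole neighbourhood of $x$ in case (i).
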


\begin{proof} Since $I \to P \to X$ is finite \'etale of degree $r$ away from $\Delta$, we get a classifying map $F\colon X \setminus \Delta \to \mathcal{M}_{g,\Sigma_r} \to M_{g,\Sigma_r}$. By Lemma \ref{lem:nonconstant}, to prove the result it suffices to show that $F$ is non-constant, if $(i)$ holds (resp. quasi-finite, if $(ii)$ holds). 

Suppose some (resp. every) fiber of $I$ over $\Delta$
is non-reduced, with the additional prohibition on points of multiplicity
$\geq r-1$ in the case of $g = 0$. Then we may take some (resp. any) geometrically integral $Y \subseteq X$ (enlarge $k$ if necessary) passing through both $X \setminus \Delta$ and $\Delta$,
in such a way that at least one fiber of $I$ over $Y \cap \Delta$ is a non-reduced scheme, such that the geometric fiber moreover
only contains points of multiplicity $\leq r-2$
if $g = 0$. Let $p \in Y \cap \Delta$ be one such branch point. It now suffices to show that the classifying map $F$ restricted to $Y$ is a non-constant morphism. We may base change along an embedding $k \to \mathbb{C}$ to check this, so we replace $k$ with $\mathbb{C}$.
If $P|_Y$ is not isotrivial, the classifying map is
certainly non-constant. If $P|_Y$ is isotrivial
(i.e. all geometric fibers are isomorphic to a fixed curve),
then Lemma \ref{lemma:analytic_degen_classifying_map}
(and Remark \ref{remark:analytic_degen_classifying_map}) 
imply that
that $F$ cannot be constant.
\end{proof}

In the rest of this section, we consider applications
of Theorem \ref{ThmMainCriterion} in the case of genus $g = 0$.
As a straightforward application, consider the following example. 

\begin{Example} \label{ex:projections1} (Projections) Let $X \subset \mathbb{P}^{n+1}$ be a hypersurface of degree $d \geq 3$ for $n \geq 1$. Fixing a point $p$ not on $X$, we project $\mathbb{P}^{n+1} \setminus \{p\} \dashrightarrow \mathbb{P}^{n}$ (see, e.g. \cite[Tag 0B1N]{stacks-project}). The induced map $f\colon X \to \mathbb{P}^{n}$ is finite flat of degree $r$,  with some branch divisor $D \subset \mathbb{P}^{n}$. Blowing up gives a projective bundle $\pi\colon \mathbb{P}(\mathcal{O}_{\mathbb{P}^n} \oplus \mathcal{O}_{\mathbb{P}^n}(1)) \to \mathbb{P}^n$ with a section, $E$. The multi-section defined by $\sigma \colon Y=E \sqcup X \to \mathbb{P}^n$ is of degree $\geq 4$ over $\mathbb{P}^n$. If $|f^{-1}(x)| \geq 2$ (equivalently, if $|\sigma^{-1}(x)| \geq 3$) for some (resp. all) $x \in D$, then Theorem \ref{ThmMainCriterion} implies $\mathbb{P}^n \setminus D$ has Zariski degeneracy of integral points (resp. is arithmetically hyperbolic).  \end{Example}

\begin{remark} \label{rem:zansec2} The result of Example \ref{ex:projections1} is originally due to Zannier, see \cite[Sec. 2]{MR2135140}. \end{remark}

\begin{Example} \label{ex:grassmannians} (Grassmannians of lines) For $n \geq 2$, let $X \subset \mathbb{P}^n$ denote a smooth projective hypersurface of degree $d \geq 4$ which does not contain a line, and suppose $G=\mathrm{Gr}(2,n+1)$ denotes the Grassmannian of lines in $\mathbb{P}^n$. Then there is an associated geometrically irreducible divisor $\Delta_X \subset G$ whose complement has Zariski degenerate integral points. Indeed, the universal line $P=\cL \subset \mathbb{P}^n_k \times G$ is a $\mathbb{P}^1$-bundle over $G$ and, by B\'ezout's theorem, the restriction $Y=\cL \times_{\mathbb{P}^n} X \subset \cL$ is finite flat of degree $d$ over $G$. Moreover, its discriminant locus $\Delta_X$ is the image of the incidence correspondence 
\[J_X=\Set{(p,[L]) | \begin{array}{c}
     \: \text{$p$ is a singular point of $L \cap X$}\\
  \end{array}} \subset \cL \times_{\mathbb{P}^n} X.
\]
via the projection to $G$. In fact, the fibers of $J_X \to X$ are isomorphic to $\mathrm{Gr}(1,n-1)$ so $J_X$ is geometrically irreducible of dimension $2n-3$. It follows that $\Delta_X$ is a geometrically irreducible divisor of $G$, as claimed. We may conclude by showing that the data $Y \subset P$ induces a non-constant morphism $G \setminus \Delta_X \to \mathcal{M}_{0,\Sigma_d} \to M_{0,\Sigma_d}$. Indeed, one may apply Theorem \ref{ThmMainCriterion}(i) by noting that the general line $[L]$ in $\Delta_X(\overline{k})$ intersects $X$ simply, that is:
\[L \cap X=\Spec \overline{k}[\epsilon]/(\epsilon^2) \sqcup \Spec \overline{k} \sqcup \dots \sqcup \Spec \overline{k}.\]
\end{Example}

\begin{remark} \label{rem:grasslines} Specializing to the case $n=2$ in Example \ref{ex:grassmannians} yields the Zariski degeneracy of integral points of some complements of dual plane curves. However, as Corollary \ref{cor:deg4plane} and Proposition \ref{lemma:dualrational} below show, this method is not optimal. Indeed, the moduli maps given by moving a marking along a fixed curve gives more information than that of moving a marked line as in Example \ref{ex:grassmannians}. \end{remark}
 
\begin{proposition} \label{lemma:dualrational} Let $k$ be a number field and suppose $\bar{C} \subset \mathbb{P}^n$ is a smoothly branched rational curve with normalization $f\colon C \to \bar{C} \subset \mathbb{P}^n$. If $\bar{C}$ is not contained in any hyperplane and $\mathrm{deg}(\bar{C})=d \geq 4$, then every set of $S$-integral points of $\mathbb{P}^n_k \setminus \bar{C}^*$ is Zariski degenerate. If, moreover, every hyperplane $H$ has the property that $C \times_{\mathbb{P}^n} H$ has no point of multiplicity $\geq d-1$, then $\check{\mathbb{P}}^n_k \setminus \bar{C}^*$ is arithmetically hyperbolic. \end{proposition}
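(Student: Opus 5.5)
We apply Theorem \ref{ThmMainCriterion} with $g=0$ and $r=d\geq 4$, following the genus one case of Theorem \ref{thm:deg4}. Let $X=\check{\mathbb{P}}^n_k$ and $P=C\times \check{\mathbb{P}}^n\to \check{\mathbb{P}}^n$. This is proper and smooth with fibers $C\cong\mathbb{P}^1$ (after enlarging $k$ if necessary; Zariski degeneracy and finiteness may be checked after a finite extension). Let $\mathcal{H}\subset \mathbb{P}^n\times\check{\mathbb{P}}^n$ be the universal hyperplane and set $I=\mathcal{H}|_{C\times\check{\mathbb{P}}^n}\subset P$. As in the proof of Theorem \ref{thm:deg4}, the map $I\to\check{\mathbb{P}}^n$ is finite flat of degree $d$ everywhere, by B\'ezout and \cite[Tag 00MF]{stacks-project}, because $\bar C$ lies in no hyperplane. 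Taking $\Delta=\bar C^*$, Lemma \ref{lemma:dual} shows that $I$ is étale exactly over $\check{\mathbb{P}}^n\setminus\Delta$, and that every fiber $I_x$ with $x\in\Delta(\bar k)$ is non-reduced. Since $\bar C$ has degree $\geq 2$, the dual $\Delta$ is a proper closed subset; in fact it is a hypersurface, and we take that as given, as in Theorem \ref{thm:deg4}.

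\emph{Finiteness.} Assume that no hyperplane section of $C$ has a point of multiplicity $\geq d-1$. Then every fiber over $\Delta$ is non-reduced with all multiplicities $\leq d-2$. Moreover, $\Delta$ is an ample hypersurface in the proper variety $\check{\mathbb{P}}^n$, so it meets every closed curve. Hence hypothesis (ii) of Theorem \ref{ThmMainCriterion} holds, and $\check{\mathbb{P}}^n\setminus\bar C^*$ is arithmetically hyperbolic.

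\emph{Zariski degeneracy.} By hypothesis (i), it suffices to produce one point $[H]\in\bar C^*(\bar k)$ for which $C\times_{\mathbb{P}^n}H$ is non-reduced but has all multiplicities $\leq d-2$. We take $H$ to be a general tangent hyperplane at a general point $p\in C$. The fiber of the incidence correspondence $I\to C$ from the proof of Lemma \ref{lemma:dual} over $p$ is a $\mathbb{P}^{n-2}$ of hyperplanes meeting $C$ with multiplicity $\geq 2$ at $p$. Those with multiplicity $\geq 3$ at $p$ form a linear subspace that is proper for general $p$; this uses characteristic zero and the fact that $C$ is nondegenerate, so a general point is not a flex. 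So a general such $H$ has contact order exactly $2$ at $p$.

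The main obstacle is to exclude high multiplicity at some \emph{other} point $q$ of $H\cap C$. The hyperplanes with multiplicity $\geq 2$ at $p$ and multiplicity $\geq d-1\geq 3$ at some $q\neq p$ form a family of dimension at most $\dim\{q\}+(n-2)-(d-2)$, computed via the conditions imposed at $q$ in the bundle over $C$. For $n\geq 2$ and $d\geq 4$ we expect this to be a proper closed subset of the $\mathbb{P}^{n-2}$ over $p$. To justify it we would count dimensions in $C\times C\times\check{\mathbb{P}}^n$ and use that osculating conditions at distinct points are independent while their total order stays below $d$. The plane case $n=2$ is the tightest: the tangent line at a general point should not be highly tangent elsewhere, which holds since $C$ has only finitely many flexes and bitangency points of high order. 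Once this is shown, the chosen $[H]$ satisfies hypothesis (i), and Theorem \ref{ThmMainCriterion} gives Zariski degeneracy.
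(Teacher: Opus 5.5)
\textbf{There is a gap in the degeneracy part, and it is easy to close.} Your setup is the paper's: the same $I=\mathcal{H}|_{C\times\check{\mathbb{P}}^n}$, Lemma \ref{lemma:dual} for the \'etale locus, Theorem \ref{ThmMainCriterion}(ii) for finiteness and Theorem \ref{ThmMainCriterion}(i) for degeneracy. The finiteness part is fine.

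The gap is the step you call the main obstacle. You leave it as an expectation, and the justification you sketch is not valid. Osculating conditions at distinct points of $C$ are not independent just because their total order is below $d$. They are linear conditions on the $(n+1)$-dimensional space $H^0(\mathbb{P}^n,\mathcal{O}(1))$, so independence is impossible once the total order exceeds $n+1$; plane curves with bitangent lines illustrate this. Moreover, the relevant total order here is $2+(d-1)=d+1$, which is not below $d$, so your principle would not even apply. The appeal to ``finitely many bitangency points of high order'' in the plane case is likewise not an argument.

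The missing idea is a one-line degree count. Since $f\colon C\to\bar C$ is birational and $\deg\bar C=d$, for every hyperplane $H$ the fiber $C\times_{\mathbb{P}^n}H$ is an effective divisor of degree $d$ on $C$. This is the same B\'ezout fact you already use for finiteness and flatness. So the multiplicities of $C\times_{\mathbb{P}^n}H$ at distinct points sum to $d$. If $H$ has multiplicity $\geq 2$ at $p$, it therefore has multiplicity $\leq d-2$ at every $q\neq p$. The family you were trying to bound by a dimension count in $C\times C\times\check{\mathbb{P}}^n$ is empty, precisely because $d+1>d$.

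Combine this with what you did prove: a general hyperplane among those tangent at a general non-flex point $p$ has contact exactly $2\leq d-2$ at $p$. This gives a point $[H]\in\bar C^*(\bar k)$ satisfying hypothesis (i), and the proof is complete.

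With this repair your argument is a slightly more direct version of the paper's. The paper shows that the locus $B\subset I_{\bar C}$ of pairs with contact $\geq d-1$ is a proper closed subset, using finiteness of inflection points. It then uses birationality of $I_{\bar C}\to\bar C^*$ to see that a general point of $\bar C^*$ avoids the image of $B$. You only need to exhibit one explicit point, and you need no information about the map $I_{\bar C}\to\bar C^*$.
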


\begin{proof} Let $f\colon C \to \bar{C} \subset \mathbb{P}^n$ denote the normalization of $\bar{C}$. Now consider the universal hyperplane $\mathcal{H} \subset  \mathbb{P}^n \times_k \check{\mathbb{P}}^n_k$ and its pullback along  $C \times_k \check{\mathbb{P}}^n_k=C_{\check{\mathbb{P}}^n_k}$. This yields a closed subscheme $Y$ of a $\mathbb{P}^1$-bundle $P$:
\[Y=C_{\check{\mathbb{P}}^n_k} \times_{\mathbb{P}^n \times_k \check{\mathbb{P}}^n_k} \mathcal{H} \subset C_{\check{\mathbb{P}}^n_k}=P\]
which is flat over $\check{\mathbb{P}}^n_k$ by \cite[Tag 00MF]{stacks-project}. Moreover, by B\'ezout's theorem it is finite of degree $d$ and even \'etale over $\check{\mathbb{P}}^n_k \setminus (\bar{C})^*$ by Lemma \ref{lemma:dual}. Thus by Theorem \ref{ThmMainCriterion} it suffices to show that for at least one fiber over $(\bar{C})^*$, the geometric fiber $Y_{\bar{x}} \subset \mathbb{P}^1_{\bar{x}}$ doesn't have a point of multiplicity $d-1$. 

However, this follows because the general point on $(\bar{C})^*$ represents a hyperplane $H'$ in $\mathbb{P}^n$ which is tangent to exactly one point $q$ of $\bar{C}$ and with intersection multiplicity exactly $2$ at $q$. To see this observe that  $\bar{C}$ has finitely many inflection points (see, e.g., \cite[Thm. 7.13]{MR3617981} or \cite[p. 37, C-2]{MR770932}). Therefore, the locus $B$ in $I_{\bar{C}}$ consisting of pairs $(p,[H])$ where $H \cap C$ has intersection multiplicity $\geq r-1$ is a proper closed subscheme. As such, it does not surject onto $(\bar{C})^*$ and by the birationality of the surjection $I_{\bar{C}} \to (\bar{C})^*$ we may choose a hyperplane $[H'] \in (\bar{C})^*$ with the desired property. 

The statement about arithmetic hyperbolicity is similar: we apply Theorem \ref{ThmMainCriterion} (ii). \end{proof}

The following example shows that Proposition \ref{lemma:dualrational} is sharp in the following sense: in contrast to the case of nonzero genus (see Theorem \ref{thm:deg4}), if $\bar{C}$ as in Proposition \ref{lemma:dualrational} has high order tangents then $\check{\mathbb{P}}^n_k \setminus (\bar{C})^*$ need not be arithmetically hyperbolic. 

\begin{Example} \label{ex:sharpness}
Fix an integer $d \geq 2$.
Let $f(t) \in \overline{\mathbb{Q}}[t]$
be any polynomial of degree $d - 1$ with distinct roots such that
$f(0) = 0$.
Let $\bar{C}$ be the degree $d$ smoothly branched rational curve in $\mathbb{P}^2$
given by the (closure of the) parameterization
    \begin{equation}
    t \mapsto [x(t) : y(t) : z(t) ] \quad\quad
    x(t) \coloneqq t^d
    \quad
    y(t) \coloneqq f(t)
    \quad
    z(t) = 1.
    \end{equation}
This map has everywhere nonvanishing derivative
(and the same holds at $t = \infty$).
Note that the projective lines $x = 0$ and $z = 0$
are tangent to $\bar{C}$ with maximum multiplicity $d$,
at the points $(0,0,1)$ and $(1,0,0)$ respectively,
corresponding to $t = 0$ and $t = \infty$ respectively.

The associated dual curve $(\bar{C})^*$ is given by the
    (closure of the) rational parameterization    \begin{equation}\label{equation:example:dual_curve_parameterization}
    t \mapsto [ -y'(t) : x'(t) : x(t) y'(t) - y(t) x'(t)]
    =
    [-f'(t) : d t^{d - 1} : t^d f'(t) - f(t) d t^{d - 1}].
    \end{equation}
Note that $(\bar{C})^*$ has degree $2(d - 1)$.
In the coordinates $[a : b : c]$ on $\check{\mathbb{P}}^2$,
the line $L$ given by $b = 0$
intersects $(\bar{C})^*$ only at
the images of $t = 0$ and $t = \infty$, i.e. $[1 : 0 : 0]$ and
$[0 : 0 : 1]$ (corresponding to the two high order tangents to
$\bar{C}$ from above).

In other words, we have
$L \cap (\check{\mathbb{P}}^2 \setminus (\bar{C})^*) \cong \mathbb{P}^1 \setminus \{ 0 , \infty\}$. This is not arithmetically hyperbolic,
so $\check{\mathbb{P}}^2 \setminus (\bar{C})^*$ also cannot
be arithmetically hyperbolic (after descent to any number field).
\end{Example}

On the other hand, the Zariski degeneracy in Proposition \ref{lemma:dualrational} is usually uniform when $n=2$. Indeed, recall that the proof of Proposition \ref{lemma:dualrational} constructs a $\mathbb{P}^1$-bundle $\pi\colon P \to \check{\mathbb{P}}^2_k$ along with a closed subscheme $Y \subset P$ (finite flat of degree $d$ over $\check{\mathbb{P}}^2_k$). Here the fiber of $Y \subset P$ over a point $[L]$ is $L \times_{\mathbb{P}^2} C \subset C \cong \mathbb{P}^1$ and therefore $Y$ is \'etale precisely over $\check{\mathbb{P}}^2_k \setminus (\bar{C})^*$ by Proposition \ref{lemma:dual}. This data yields a morphism to the moduli stack of hypersurfaces in $\mathbb{P}^1$:
\[c\colon \check{\mathbb{P}}^2_k \to \mathcal{C}_{d,1}\]
and we may restrict $c$ to the semi-stable locus $c^{-1}(\mathcal{C}_{d,1}^{\mathrm{ss}})=U^{\mathrm{ss}}$ (see \ref{sub:git}). We call a point of $x \in \check{\mathbb{P}}^2_k$ \emph{semi-stable} if it lies on $U^{\mathrm{ss}}$. We prove:

\begin{proposition} \label{prop:GIT:uniformly_non-degen} Let $k$ be a number field and suppose $\bar{C} \subset \mathbb{P}^2_k$ is a smoothly branched rational curve. If $\mathrm{deg}(\bar{C})=d \geq 5$ and $(\bar{C})^*$ contains a semi-stable closed point $x$ such that $Y_{\bar{x}}$ is not isomorphic to the scheme 
\[\Spec \overline{k}[\epsilon]/(\epsilon^2) \sqcup \Spec \overline{k} \sqcup \dots \sqcup \Spec \overline{k},\]
(this is satisfied if, for example, $(\bar{C})^*_{\overline{k}}$ contains a simple node). Then every set of $S$-integral points of $\check{\mathbb{P}}^2_k \setminus (\bar{C})^*$ is uniformly Zariski degenerate. \end{proposition}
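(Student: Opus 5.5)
The plan is to apply Lemma \ref{lem:nonconstant} in its ``generically finite'' form to the classifying map of $\check{\mathbb{P}}^2_k \setminus (\bar{C})^*$ into $\mathcal{M}_{0,\Sigma_d}$. Composing with the GIT quotient gives the diagram
\[\check{\mathbb{P}}^2_k\setminus(\bar{C})^* \xrightarrow{F} \mathcal{M}_{0,\Sigma_d}=\mathcal{C}^{\mathrm{sm}}_{d,1}\subset \mathcal{C}^{s}_{d,1} \xrightarrow{\pi} C^{s}_{d,1}.\]
Since $\mathcal{M}_{0,\Sigma_d}$ is arithmetically hyperbolic by Corollary \ref{cor:binaryformsAH} (with $d\ge 5\ge 4$), it suffices to show that the map $\check{\mathbb{P}}^2\setminus(\bar{C})^*\to M_{0,\Sigma_d}$ to the coarse space is generically finite. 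That map factors through the open immersion $M_{0,\Sigma_d}\subset C^{s}_{d,1}$, which is injective on geometric points by Proposition \ref{prop:GIT}. So it is enough to show that the rational map $\bar{c}\colon \check{\mathbb{P}}^2 \dashrightarrow C^{ss}_{d,1}$, defined on $U^{\mathrm{ss}}$, has $2$-dimensional image, i.e.\ is generically finite onto its image.

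For a map from a surface, failure of generic finiteness means either that $\bar c$ is constant or that its image is a curve. I would rule out each in turn, working over $\mathbb{C}$.

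\emph{Non-constancy.} Since $d\ge 5$, a general tangent line gives a fiber $\Spec\bar k[\epsilon]/(\epsilon^2)\sqcup(\text{reduced})$, whose double point has multiplicity $2<d/2$. This is a stable but non-smooth point, so $\bar c$ sends it into $C^{s}\setminus M_{0,\Sigma_d}$. On the other hand, $\bar c$ sends the interior into $M_{0,\Sigma_d}$. Hence $\bar c$ is non-constant (alternatively, use Lemma \ref{lemma:analytic_degen_classifying_map}).

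\emph{Two-dimensional image.} Suppose the image were a curve $B$, so that the fibers of $\bar c$ are curves. The key observation is that the stratum $Z\subset C^{s}_{d,1}$ of forms with exactly one double root and otherwise reduced has dimension $d-4$, one less than $\dim C^{s}_{d,1}=d-3$. The image of $(\bar{C})^*\cap U^{\mathrm{ss}}$ consists of points of $Z$ (for general points of $(\bar{C})^*$) together with the special point $\bar c(x)$, which lies outside $Z$.

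Since $(\bar{C})^*$ is irreducible and its general point maps to $Z$, every fiber of $\bar c$ through a point of $(\bar{C})^*$ is a curve in $\check{\mathbb{P}}^2$. Either such a fiber lies inside $(\bar{C})^*$, or it meets the interior. Then I use that $\bar c$ is constant on each fiber, so points of the interior lying on a fiber through a general point of $(\bar{C})^*$ would have to map to $Z$, which is disjoint from the smooth locus. This is a contradiction. Hence every fiber through a point of $(\bar{C})^*$ is contained in $(\bar{C})^*$, so each such fiber is all of $(\bar{C})^*$ (it is an irreducible curve) and $\bar c$ is constant on $(\bar{C})^*\cap U^{\mathrm{ss}}$. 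But $\bar c(x)\notin Z$ while general points map into $Z$, contradicting this constancy. Here we need $x\in U^{\mathrm{ss}}$ so that $\bar c(x)$ is defined and the fiber-type conclusion applies near $x$. Therefore $\bar c$ is generically finite.

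The main obstacle is the fiber-dimension argument above. It silently uses that the fiber of $\bar c$ through a point of $(\bar{C})^*$ is closed in $U^{\mathrm{ss}}$ (true, since $\bar c$ is a morphism there) and that semistable-but-not-stable points might map to a common point with smooth forms. The latter is only an issue on the strictly semistable locus, so I would check that the general point of $(\bar{C})^*$ is stable (multiplicity $2<d/2$) and argue inside $\pi^{-1}(C^{s})$, treating $x$ via closure if $x$ is only semistable. If this gets delicate, a fallback is to pick a pencil of lines through a general point of $\check{\mathbb P}^2$ and apply Lemma \ref{lemma:analytic_degen_classifying_map} along each degenerating path, showing that $\bar c$ restricted to a general line is non-constant and that distinct lines have distinct images generically.
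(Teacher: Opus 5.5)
Your proposal is correct and is essentially the paper's argument. After the same reduction through Lemma \ref{lem:nonconstant} and the GIT quotient, your fiber-trapping step is just the fiber-side version of the paper's observation: a curve image $W$ could meet $C^{\mathrm{ss}}_{d,1}\setminus C^{\mathrm{sm}}_{d,1}$ only in finitely many points, whereas the image of $(\bar{C})^*\cap U^{\mathrm{ss}}$ contains the two distinct points $\bar c(x)$ and $\bar c(y)$ (with $y$ general, stable, with one double point) and so is positive-dimensional.

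The dimension count for $Z$ is never used. Your worry about strictly semistable points is also moot: $\pi^{-1}(C^{s}_{d,1})=\mathscr{C}^{s}_{d,1}$ by Proposition \ref{prop:GIT}, so strictly semistable points never share an image with stable (in particular smooth) forms, and no closure argument for $x$ is needed.
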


\begin{proof} By Theorem \ref{lem:nonconstant}, it suffices to show that the morphism
\[p \circ c\colon U \to C^{\mathrm{ss}}_{d,1}\]
is generically finite (or equivalently, that the image of $(p \circ c)(U)$, $W$, is not a curve). If we can show this, then $\check{\mathbb{P}}^2_k \setminus (\bar{C})^* \to M_{0,\Sigma_d} \subset C^{s}_{r,n}$ is generically finite and the result follows.

Suppose $(\bar{C})^*(\overline{k}) \cap U^{\mathrm{ss}}$ contains a line $[L_0]$ such that $L_0 \times_{\mathbb{P}^2} C$ is not isomorphic to 
\[\Spec \overline{k}[\epsilon]/(\epsilon^2) \sqcup \Spec \overline{k} \sqcup \dots \sqcup \Spec \overline{k}.\]
However, the general point of $(\bar{C})^*$ corresponds to a line $L$ with 
\[L \times_{\mathbb{P}^2} C \cong \Spec \overline{k}[\epsilon]/(\epsilon^2) \sqcup \Spec \overline{k} \sqcup \dots \sqcup \Spec \overline{k},\]
 and is therefore a \emph{stable} point of $(\bar{C})^*$ (since $d \geq 5$). It follows that the image of $(\bar{C})^* \cap U^{\mathrm{ss}}$ under $p \circ c$ consists of at least two distinct points and by the irreducibility of $(\bar{C})^*$, this image, $W'$, must be positive dimensional. On the other hand, if $W$ was a curve then it must intersect $C^{\mathrm{ss}}_{d,1} \setminus C^{\mathrm{sm}}_{d,1}$ at a zero dimensional scheme. This is a contradiction since $W' \subset W \cap C^{\mathrm{ss}}_{d,1} \setminus C^{\mathrm{sm}}_{d,1}$. \end{proof}



\section{Acknowledgments}

The authors would like to thank Ariyan Javanpeykar, Amos Turchet,
and Shou-Wu Zhang for their comments and suggestions.
This research was partially conducted during the period
R.C. served as a Clay Research Fellow.
N.G.-F. was supported by ANID Fondecyt Regular grant 1251300 from Chile. S.M. was supported by ANID Fondecyt Regular grant 1230402 from Chile. H.P. was supported by ANID Fondecyt Regular grant 1230507 from Chile.

\bibliography{mybib}{}
\bibliographystyle{plain}

\end{document}